\newcommand{\R}{\mathbb{R}}
\newcommand{\N}{\mathbb{N}}
\newtheorem{theorem}{Theorem}[section]
\newtheorem{lemma}[theorem]{Lemma}
\newtheorem{proposition}[theorem]{Proposition}
\newtheorem{corollary}[theorem]{Corollary}
\theoremstyle{remark}
\newtheorem{remark}{Remark}[section]
\theoremstyle{definition}
\newtheorem{definition}{Definition}[section]
\numberwithin{equation}{section}
\def\@cite#1#2{[{{\bfseries #1}\if@tempswa , #2\fi}]}
\begin{document}
\begin{center}
\Large{{\bf 
Domain characterization 
for Schr\"odinger operators with 
sub-quadratic singularity
}}
\end{center}

\vspace{5pt}

\begin{center}
Giorgio Metafune%
\footnote{
Dipartimento di Matematica e Fisica “Ennio De Giorgi”, Universit\`a del Salento, C.P.193, 73100, Lecce, Italy, 
E-mail:\ {\tt giorgio.metafune@unisalento.it}}
and 
Motohiro Sobajima%
\footnote{
Department of Mathematics, 
Faculty of Science and Technology, Tokyo University of Science,  
2641 Yamazaki, Noda-shi, Chiba, 278-8510, Japan,  
E-mail:\ {\tt msobajima1984@gmail.com}}
\end{center}

\newenvironment{summary}{\vspace{.5\baselineskip}\begin{list}{}{%
     \setlength{\baselineskip}{0.85\baselineskip}
     \setlength{\topsep}{0pt}
     \setlength{\leftmargin}{12mm}
     \setlength{\rightmargin}{12mm}
     \setlength{\listparindent}{0mm}
     \setlength{\itemindent}{\listparindent}
     \setlength{\parsep}{0pt}
     \item\relax}}{\end{list}\vspace{.5\baselineskip}}
\begin{summary}
{\footnotesize {\bf Abstract.}
We characterize the domain of  the Schr\"odinger operators $S=-\Delta+c|x|^{-\alpha}$ in $L^p(\R^N)$, 
with $0<\alpha<2$  and $c\in\R$. 
When $\alpha p< N$, the domain characterization is essentially known and
can be proved using different tools, for instance kernel estimates and potentials in the 
Kato class or in  the reverse H\"older class. 
However, the other cases seem not to be known, so far.
In this paper, we give the explicit description of the domain 
of  $S$ for all range of parameters $p,\alpha$ and $c$. 
}
\end{summary}

{\footnotesize{\it Mathematics Subject Classification}\/ (2020): 35J75, 35K67, 47D08.

}

{\footnotesize{\it Key words and phrases}\/: Schr\"odinger semigroups, elliptic regularity.
}


\section{Introduction} 
In this paper we consider Schr\"odinger operators 
with singular potentials of the form
\[
 S=-\Delta +\frac{c}{|x|^{\alpha}} \quad\text{in}\ \R^N 
\]
in the Lebesgue space $L^p=L^p(\R^N)$
where $N\geq 2$, $0<\alpha<2$, $c\in\R$ and $1<p<\infty$.
In particular our analysis applies to the Coulomb potential 
corresponding to $\alpha=1$, both in the attracting and repulsive case, depending on the sign of $c$.
Here we focus our attention 
on {\it the characterization of the domain} of 
the reasonable realization $S_p$ of $S$ in $L^p$.
We do not consider the case $N=1$, which should be treated on the half line $[0, \infty[$ and is slightly different because of boundary conditions at $x=0$.

From the viewpoint of the scale homogeneity, the potential $c|x|^{-2}$  
has the same homogeneity of the Laplacian. 
Therefore 
 the case $\alpha=2$ is expected to be critical in some sense.  
Actually, the situations for three cases $0<\alpha<2$, $\alpha=2$
and $\alpha>2$ are completely different from each other. 

In the case $p=2, N=3$, Schr\"odinger operators with  
Coulomb potentials  have been  studied by Kato in \cite{Kato}  who proved, via the Kato-Rellich perturbation theorem, 
that the operator $S_2$ endowed with domain $H^2(\R^3)$ is selfadjoint.
This perturbation theory has been extended to the $L^p$ setting 
by  Okazawa \cite{Okazawa1996} and Davies--Hinz \cite{DaviesHinz1998}). 
Actually, as a consequence of the Rellich inequality, 
on can prove that if $1<p<\frac{N}{2}$, then 
$S_p$ endowed with domain $W^{2,p}(\R^N)$ 
is quasi-$m$-accretive in $L^p$ if $0<\alpha <2$ and $c \in \R$. 
In Section 6 we employ such an approach to reach $1<p< \frac N \alpha$.

If $\alpha >2$ and $c>0$, Okazawa proved in \cite[Theorem 2.1]{Okazawa1984} that 
the domain of $S_p$ in $L^p$ $(1<p<\infty)$,  is the intersection 
between the domain of the Laplacian and that of the potential, namely, 
\begin{equation} \label{domainreg}
D(S_p)=W^{2,p}(\R^N) \cap D(|x|^{-\alpha})=\{u \in W^{2,p}(\R^N): |x|^{-\alpha} u \in L^p\}.
\end{equation}

Since the case $\alpha=2$ is critical, 
one needs $c \geq -(N-2)^2/4$ in order that $S$ is semibounded. 
In this case $-S_p$ generates a semigroup only for certain values of $p$ and the domain characterization depends on $p$, \cite[Sections 3,4]{Okazawa1996} or \cite[Examples 7.1, 7.2]{MOSS-inverse2}.
Moreover, in some case, the realization of $S$ as a (negative) generator 
of positive $C_0$-semigroup in $L^p$ is not unique (see e.g., \cite{MSSnonunique}). 

Now we go back to the case $\alpha<2$ where we have not been able to find general results even in the $L^2$ setting. 

One can prove that  \eqref{domainreg} holds if and only if $\alpha p <N$ 
(and in this case $D(S_p)=W^{2,p}(\R^N)$) by the following argument. 
Since the potential $V(x)=c|x|^{-\alpha}$ is in the Kato class, 
see \cite[Proposition A.2.5]{Simon1982}, 
the associated semigroup $e^{-tS}$ satisfies upper and lower Gaussian estimates, 
\cite[Section B.7]{Simon1982} so that, 
taking the Laplace transform of the semigroup, the integral kernel of $(\lambda +S)^{-1}$ 
is comparable to that of $(\lambda-\Delta)^{-1}$. 
Therefore $V(\lambda+S)^{-1}$ is bounded in $L^p$ if and only if 
the same holds for $V(\lambda-\Delta)^{-1}$. 
This means that the multiplication by $|x|^{-\alpha}$ is bounded from $W^{2,p}(\R^N)$ to $L^p$ 
and requires $\alpha p <N$. 
Methods as in \cite{Shen1995}, which essentially prove \eqref{domainreg} 
under reverse H\"older conditions on the potential, lead to the same restriction.

The purpose of this paper is to characterize the domain of a suitable realization $S_p$ of 
$S$ for all $0<\alpha<2$, 
$c\in \R$ and $1<p<\infty$. 
To clarify the strategy, we give a proof also for the known cases. 
As a consequence, 
it turns out that for the case $p \geq N/\alpha$, 
the domain of $S_p$ differs from the usual one $W^{2,p}(\R^N)$
but only for  a finite dimensional space which depends on 
the power of singularity $\alpha$ and also the constant $c$ in front of $|x|^{-\alpha}$. 
More precisely, 
the description of the domain of $S_p$ employs the functions
\begin{gather*}
\phi(x)=\sum_{k=0}^{m}
\frac{c^k\Gamma(\frac{N-\alpha}{2-\alpha})}{(2-\alpha)^{2k} k!\Gamma(\frac{N-\alpha}{2-\alpha}+k)}
|x|^{(2-\alpha)k}.
\\
\phi_j(x)
=
x_j
\sum_{k=0}^{m}
\frac{c^k\Gamma(\frac{N}{2-\alpha}+1)}{(2-\alpha)^{2k}k!\Gamma(\frac{N}{2-\alpha}+1+k)}
|x|^{(2-\alpha)k}.
\end{gather*}
We use the first when $\frac N p \leq \alpha < \frac N p+1$ and both when $\alpha \geq \frac N p+1$ to capture the singularity near the origin of function in the domain. In fact, $\phi \in W^{2,p}_{loc}(\R^N)$ if and only if $\alpha < \frac N p$ and $\phi_j \in W^{2,p}_{loc}(\R^N)$ if and only if $\alpha < \frac N p+1$ . However, $S\phi$ and $S\phi_j$ are bounded near the origin if $m$ is sufficiently large.

{\bf Notation.} We use $L^p, W^{k,p}$ for $L^p(\R^N), W^{k,p}(\R^N)$. $B_r=B(0,r)$ is the ball in $\R^N$ centred at 0 and with radius $r>0$ and we write $B$ for $B_1$.
We write $A_p$ for the $L^p$ realization of a differential operator $A$.
\section{Preliminary results}
\subsection{The operator $S$  in spherical coordinates} \label{preliminaries}
We introduce spherical coordinates
\begin{equation*}
\left\{
\begin{array}{ll}
  x_1=r \cos\theta_1\sin \theta_2\ldots\sin \theta_{N-1}\\ 
  x_2=r \sin\theta_1\sin \theta_2\ldots\sin \theta_{N-1}\\
 \vdots\\
 x_n=r\cos \theta_{N-1}\end{array}\right.
\end{equation*}
where $\theta_2,\ldots,\theta_{N-1}$ range from $0$ to $\pi$ and $\theta_1$ ranges from $0$ to $2\pi$.
The Laplace operator is then  given by 
$$\Delta= \frac{\partial^2}{\partial r^2}+\frac{N-1}{r}\frac{\partial }{\partial r}+\frac{1}{r^2}\Delta_0$$
where 
$$\Delta_0=\frac{1}{\sin^{N-2}\theta_{N-1}}\frac{\partial}{\partial\theta_{N-1}}\sin^{N-2}\theta_{N-1}\frac{\partial}{\partial\theta_{N-1}}+\ldots+\frac{1}{\sin^2\theta_{N-1}\cdots\sin^2\theta_2}\frac{\partial^2}{\partial\theta_1^2}$$
 is the Laplace-Beltrami operator on the unit sphere $S^{N-1}$, see \cite[Chapter IX]{vilenkin}) . 

We recall that a spherical harmonic $P_n$ of order $n$ is the restriction to $S^{N-1}$ of a homogeneous harmonic polynomial of degree $n$ and that the linear span of spherical harmonics (which coincides with all polynomials) is  dense in $C(S^{N-1})$, hence in $L^p(S^{N-1})$.

\begin{lemma} 
 Let $P_n$ be a spherical harmonic of degree $n$. Then 
$$\Delta_0 P_n=-(n^2+(N-2)n)P_n.$$ 
The values $\lambda_n:=n^2+(N-2)n$ are the eigenvalues of the Laplace-Beltrami operator $-\Delta_0$ on $S^{N-1}$. 
The corresponding eigenspace consists of all spherical harmonics of degree $n$ and has dimension $d_n$ where $d_0=1, d_1=N$ and
$$
d_n= \binom{N+n-1}{n}-\binom{N+n-3}{n-2}
$$
for $n \ge 2$.
\end{lemma}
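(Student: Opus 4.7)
The plan is to handle the three assertions in sequence, reducing the geometric claims to direct computations on $\R^N$.

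First I would verify the eigenvalue identity. If $P$ is a homogeneous harmonic polynomial of degree $n$, then in polar coordinates $P(x)=r^nP_n(\theta)$, where $P_n$ denotes its restriction to $S^{N-1}$. Plugging this into the polar decomposition of $\Delta$ recalled in the previous subsection and using $\partial_r P=nr^{n-1}P_n$, $\partial_r^2 P=n(n-1)r^{n-2}P_n$, the identity $\Delta P=0$ collapses, after dividing by $r^{n-2}$, to
$$\bigl(n(n-1)+(N-1)n\bigr)P_n+\Delta_0 P_n=0,$$
which gives exactly $\Delta_0 P_n=-\lambda_n P_n$ with $\lambda_n=n^2+(N-2)n$.

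Next I would argue that these exhaust the spectrum. Since $S^{N-1}$ is compact, $-\Delta_0$ is essentially self-adjoint with compact resolvent, so its $L^2$-spectrum is discrete and its eigenfunctions form an orthonormal basis of $L^2(S^{N-1})$. The excerpt already records that the span of spherical harmonics (which equals the span of restrictions of polynomials) is dense in $L^2(S^{N-1})$; by the first step each spherical harmonic of degree $n$ is an eigenfunction with eigenvalue $\lambda_n$, and since $n\mapsto\lambda_n$ is strictly increasing there is no mixing between degrees. Hence the spectrum is $\{\lambda_n\}_{n\ge 0}$ and the eigenspace for $\lambda_n$ is precisely the space of spherical harmonics of degree $n$.

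For the dimension formula, let $\mathcal{P}_n$ be the space of homogeneous polynomials of degree $n$ on $\R^N$, with $\dim\mathcal{P}_n=\binom{N+n-1}{n}$, and let $\mathcal{H}_n\subset\mathcal{P}_n$ be the harmonic ones. I would establish the Fischer decomposition $\mathcal{P}_n=\mathcal{H}_n\oplus|x|^2\mathcal{P}_{n-2}$, which immediately yields $d_n=\binom{N+n-1}{n}-\binom{N+n-3}{n-2}$ for $n\ge 2$; the cases $n=0,1$ give $d_0=1$ and $d_1=N$ by direct inspection. The only nontrivial point is this algebraic decomposition, which is the main obstacle. I would prove it by equipping $\mathcal{P}_n$ with the Fischer inner product in which $\Delta$ is the formal adjoint of multiplication by $|x|^2$: then the surjectivity of $\Delta:\mathcal{P}_n\to\mathcal{P}_{n-2}$ reduces to the injectivity of $\mathcal{P}_{n-2}\ni Q\mapsto |x|^2Q\in\mathcal{P}_n$, which is obvious, and $\mathcal{H}_n=\ker\Delta$ is then the orthogonal complement of $|x|^2\mathcal{P}_{n-2}$.
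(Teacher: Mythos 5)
Your proof is correct, but note that the paper itself offers no proof of this lemma: it is recorded as classical background (the surrounding text points to Vilenkin's book for the spherical-coordinate framework), so there is no internal argument to compare yours against. What you give is the standard textbook proof, and all three steps are sound: the polar-coordinate computation correctly converts harmonicity of $r^nP_n$ into $\Delta_0P_n=-\lambda_nP_n$ with $\lambda_n=n(n-1)+(N-1)n=n^2+(N-2)n$; the exhaustion argument works because $-\Delta_0$ is self-adjoint with compact resolvent on the compact manifold $S^{N-1}$, the spherical harmonics are total in $L^2(S^{N-1})$ (the paper records density in $C(S^{N-1})$, which suffices), and $n\mapsto\lambda_n$ is strictly increasing for $N\geq 2$ since $\lambda_{n+1}-\lambda_n=2n+N-1>0$, so no eigenvalue outside $\{\lambda_n\}$ can occur and each eigenspace is exactly the space of degree-$n$ harmonics; and the Fischer-inner-product proof of the decomposition $\mathcal{P}_n=\mathcal{H}_n\oplus|x|^2\mathcal{P}_{n-2}$ is the standard route to $d_n=\binom{N+n-1}{n}-\binom{N+n-3}{n-2}$, with $d_0=1$, $d_1=N$ immediate since every polynomial of degree at most one is harmonic. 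Two small points you leave implicit and should state explicitly: first, the restriction map from homogeneous harmonic polynomials of degree $n$ to $S^{N-1}$ is injective (by homogeneity, $P(x)=|x|^nP_n(x/|x|)$), which is what identifies the eigenspace dimension with $\dim\mathcal{H}_n$; second, in the exhaustion step an $L^2$ eigenfunction should be upgraded to a smooth one by elliptic regularity (or the pairing carried out distributionally) before you invoke orthogonality against spherical harmonics of other degrees. Neither is a gap of substance.
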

If $u\in C_c^\infty(\R^N \setminus \{0\})$, $u(x)=\sum c_n(r)P_n(\omega)$ (here we consider finite sums), then
\begin{align} \label{represent}
&-Su(r,\omega)=\sum \left(c''_n(r)+\frac{N-1}{r}c'_n(r)-\left (\frac {c}{r^\alpha}+\frac{\lambda_n}{r^2}\right )c_n(r)\right)P_n(\omega)
\end{align}
where the eigenvalues $\lambda_n$ are repeated according to their multiplicity.

\subsection{The spaces $L^p_{J}$}
If $X,Y$ are function spaces over $G_1, G_2$ we denote by $X\otimes Y$ the algebraic tensor product of $X,Y$, that is the set of all functions $u(x,y)=\sum_{i=1}^n f_i(x)g_i(y)$ where $f_i \in X, g_i \in Y$ and $x \in G_1, y\in G_2$.
In what follows  we denote by $P$ a spherical harmonic and by ${\rm deg}\ P$ its degree. We fix a complete orthonormal  system of spherical harmonics $\{P_j, j \in \N_0\}$ (which is dense in $ L^p(S^{N-1})$ for every $1 \le p < \infty$) and a subset $J$ of $\N_0$.

 When $J \subset \N_0$
$L^p_{J}$ $(1\leq p<\infty)$ is the closure of 
$$L^p(]0,+\infty[, r^{N-1}d\rho)\otimes span\{P_j: j \in J\}$$  in $L^p(\R^N)$.  We use $L^p_{\ge n}$, $L^p_{< n}$ and $L^p_{n}$ when $J$ identifies all spherical harmonics of degree $ \ge n$, $<n$, $=n$, respectively.

Let us observe that $L^p=L^p_{\N_0}$ and that $L^p_0$ consists of all radial functions in $L^p$. Moreover
$C_c^\infty (]0,\infty[)\otimes  span\{P_j: j \in J\}$ is dense in $L^p_J$ for $1 \le p < \infty$.
Observe that, by (\ref{represent}), the spaces $L^p_J$ are invariant under the operator $L$. 

The next results clarify the structure of the spaces  $L^p_{J}$. We refer to \cite[Section 2]{MSS-Rellich-disconti} for all proofs.

\begin{lemma} \label{projection} Let $1 \le p \le \infty$ and assume that the $L^2$ orthogonal projection $S:L^2(S^{N-1}) \to span\{P_j: j \in J\}$ extends to a bounded projection in $L^p(S^{N-1})$. Then
$$
L^p=L^p_{J} \oplus L^p_{\N_0 \setminus J}
$$
and
\begin{equation}   \label{inclusione}
 L^p_{J}=\left \{u \in L^p: \int_{S^{N-1}} u(r\, \omega)P_j(\omega) \, d\sigma (\omega)=0\ {\rm for} \ r>0\ {\rm  and}\  j\not \in J \right \}.
\end{equation}
If $J$ is finite we have in addition  
$$
L^p_{J}=\Bigl \{ u=\sum_{ j \in J}f_j(r)P_j(\omega): f_j \in L^p(]0,+\infty[, r^{N-1}d\rho)\Bigr \}
$$ and 
the projection $I\otimes S :L^p \to L^p_{ J}$ is given by
$$
(I\otimes S) u=\sum_{j \in  J}T_j u (r)\, P_j(\omega),
$$
where $$T_j u (r):=\int_{S^{N-1}} u(r\, \omega)P_j(\omega) \, d\sigma (\omega).$$
\end{lemma}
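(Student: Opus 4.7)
The plan is to exploit the natural fibration $L^p(\R^N) \simeq L^p((0,\infty), r^{N-1}dr; L^p(S^{N-1}))$ given by polar coordinates, and to lift the spherical projection $S$ to a bounded projection $\Pi := I\otimes S$ on $L^p(\R^N)$ acting fiberwise via $(\Pi u)(r,\omega) := S[u(r,\cdot)](\omega)$. Boundedness and the idempotence $\Pi^2=\Pi$ are inherited from the hypothesis on $S$ via Fubini. First I would identify the range of $\Pi$ with $L^p_J$: on the dense subspace $L^p((0,\infty), r^{N-1}dr) \otimes span\{P_j: j\in J\}$ of $L^p_J$, $\Pi$ acts as the identity, so one inclusion is immediate; the opposite inclusion follows because $S$ sends $L^p(S^{N-1})$ into the $L^p$-closure of $span\{P_j: j\in J\}$, and one passes to the limit using continuity of $\Pi$. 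The same reasoning applied to the complementary bounded projection $I-S$ yields that $I-\Pi$ has range $L^p_{\N_0\setminus J}$, so the direct sum decomposition $L^p = L^p_J \oplus L^p_{\N_0\setminus J}$ follows from the standard theory of bounded projections on Banach spaces.

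For the integral description (\ref{inclusione}), the forward implication is easy. By H\"older and $P_j\in L^\infty(S^{N-1})$, the spherical transform $T_j$ is bounded from $L^p(\R^N)$ to $L^p((0,\infty), r^{N-1}dr)$; approximating $u\in L^p_J$ by finite linear combinations $\sum_{i\in J} f_i(r) P_i(\omega)$ and using $L^2$-orthonormality of $\{P_i\}$ gives $T_j u = 0$ for $j\notin J$ in the limit. The reverse implication is where I expect the main obstacle. Given $u\in L^p$ with $T_j u = 0$ for all $j\notin J$, I would decompose $u = \Pi u + w$ where $w := (I-\Pi)u \in L^p_{\N_0\setminus J}$. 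Applying the easy direction to $\Pi u \in L^p_J$ gives $T_j(\Pi u) = 0$ for $j\notin J$, and applying it to $w$ gives $T_j w = 0$ for $j\in J$; combining with the hypothesis and linearity then forces $T_j w(r) = 0$ for \emph{every} $j\in\N_0$ and a.e.\ $r>0$. Since the spherical harmonics generate a unital subalgebra of $C(S^{N-1})$ that separates points, Stone--Weierstrass makes their linear span dense in $C(S^{N-1})$, so $w(r,\cdot) \in L^1(S^{N-1})$ annihilates every continuous function and must vanish; hence $w\equiv 0$ and $u\in L^p_J$.

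Finally, for finite $J$ the $L^2$-orthogonal projection takes the explicit form $Sv = \sum_{j\in J} (v, P_j) P_j$, which is trivially bounded on every $L^p(S^{N-1})$ since $J$ is finite and each $P_j$ is continuous. Lifting via $I\otimes S$ produces the displayed formula $(I\otimes S)u = \sum_{j\in J} T_j u(r) P_j(\omega)$, and the description of $L^p_J$ as sums $\sum_{j\in J} f_j(r) P_j(\omega)$ with $f_j \in L^p((0,\infty), r^{N-1}dr)$ then follows from the identification of $L^p_J$ with the range of $I\otimes S$. The only genuinely delicate point throughout is the reverse direction of (\ref{inclusione}), where combining the complementary projection $I-\Pi$ with Stone--Weierstrass cannot be avoided.
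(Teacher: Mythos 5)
Your proposal is correct and, modulo routine measurability details, is essentially the argument the paper relies on: the paper gives no internal proof of Lemma \ref{projection}, referring for all proofs to \cite[Section 2]{MSS-Rellich-disconti}, where the result is obtained in the same way you propose — lifting the spherical projection fiberwise through the identification $L^p(\R^N)\simeq L^p(]0,\infty[,r^{N-1}dr;L^p(S^{N-1}))$, identifying the range of $I\otimes S$ with $L^p_J$ via density of the tensor products, and deducing \eqref{inclusione} from the sup-norm density of spherical harmonics in $C(S^{N-1})$. Two small points worth flagging: your density arguments require $p<\infty$, which is consistent with the paper's definition of $L^p_J$ (stated only for $1\le p<\infty$) despite the lemma's nominal range $1\le p\le\infty$; and your reverse inclusion in \eqref{inclusione} uses the bounded projection, which is legitimate under the stated hypothesis but does not yield the stronger fact noted in the paper's subsequent remark, namely that \eqref{inclusione} holds even without assuming a bounded projection.
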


\begin{remark}{\rm 

Observe that the hypotheses on the above lemma are always satisfied if $p=2$ or if $J$ (or $\N_0\setminus J$) is finite. In this last case note also that if $u \in C_c^\infty (\R^N \setminus \{0\})$  then $(I\otimes S)u \in C_c^\infty (\R^N \setminus \{0\})$.
We also remark that equality (\ref{inclusione}) holds without assuming the existence of a bounded projection.
}
\end{remark}

We denote by $W^{k,p}_0$ the closure of $C_c^\infty(\R^N \setminus \{0\})$ in $W^{k,p}$.

\begin{lemma}\label{decomposition}
Assume that $N \geq 2$.  Let  $w_0\in C_c^\infty(\R^N)$ be a radial function satisfying 
$w_0=1$ on $B(0,1)$ and $w_0=0$ on $B(0,2)^c$ and  
\[
w_j(x)=x_jw_0(x)=r \omega_j w_0(r), \quad j=1,\ldots, N.
\]
Then 
\begin{itemize}
\item[(i)] If $ 1 \leq p \leq N$, then $W^{1,p}=W^{1,p}_0$;
\item[(ii)] If $N < p<\infty$, then $W^{1,p}=W^{1,p}_0\oplus span\{w_0\}$;
\item[(iii)]
If $1 \leq p\leq N/2$, then $W^{2,p}=W_0^{2,p}$; 
\item[(iv)]
if $N/2<p\leq N$, then 
$W^{2,p}=W^{2,p}_0\oplus span\{w_0\}$;
\item[(v)]
if $N<p<\infty$, then 
$W^{2,p}=W^{2,p}_0 \oplus span\{w_0,w_1,\cdots,w_N\}$. 
\end{itemize}
\end{lemma}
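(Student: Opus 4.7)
The plan is to prove all five cases by combining a density argument for $C_c^\infty(\R^N\setminus\{0\})$ with an identification of the finite-dimensional complement via continuous point-evaluation (or gradient-evaluation) functionals on $W^{k,p}$, which are available precisely in the Morrey ranges $p>N/2$ and $p>N$.

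For (i) and (iii), since $C_c^\infty(\R^N)$ is dense in $W^{k,p}$, it suffices to approximate each $u\in C_c^\infty(\R^N)$ by elements of $C_c^\infty(\R^N\setminus\{0\})$. Fix $\eta\in C^\infty(\R)$ with $\eta\equiv 0$ on $[0,1]$ and $\eta\equiv 1$ on $[2,\infty)$ and set $\eta_\ep(x)=\eta(|x|/\ep)$. Then $\eta_\ep u\in C_c^\infty(\R^N\setminus\{0\})$, and the critical terms to control are
\[
\int_{\R^N}|u\,\nabla\eta_\ep|^p\,dx\le C\|u\|_\infty^p\ep^{N-p},\qquad
\int_{\R^N}|u\,D^2\eta_\ep|^p\,dx\le C\|u\|_\infty^p\ep^{N-2p},
\]
which vanish with $\ep$ when $p<N$ (case (i)) or $p<N/2$ (case (iii)). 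At the critical thresholds $p=N$ and $p=N/2$, the power cutoff is replaced by a logarithmic one, whose derivative norms pick up an extra $|\log\ep|^{-1}$ factor that beats the borderline volume count.

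For (ii), (iv), (v), Morrey's embedding gives $W^{1,p}\hookrightarrow C^0$ when $p>N$, $W^{2,p}\hookrightarrow C^0$ when $p>N/2$, and $W^{2,p}\hookrightarrow C^1$ when $p>N$; hence $u\mapsto u(0)$ and, in case (v), $u\mapsto\partial_j u(0)$ are continuous linear functionals on $W^{k,p}$, and they vanish on $C_c^\infty(\R^N\setminus\{0\})$ and therefore on $W^{k,p}_0$. Since $w_0(0)=1$, $w_j(0)=0$ and $\partial_k w_j(0)=\delta_{jk}$, the listed finite-dimensional spaces are transverse to $W^{k,p}_0$. Given $u\in W^{k,p}$, set
\[
v=u-u(0)w_0-\sum_{j=1}^N\partial_j u(0)w_j
\]
(drop the last sum outside case (v)); then $v(0)=0$, and in (v) also $\nabla v(0)=0$. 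It remains to show $v\in W^{k,p}_0$, which we do by applying $\eta_\ep$ to a mollification of $v$. The borderline term $\int|v\,\nabla\eta_\ep|^p$ is controlled through the \emph{local} Morrey bound
\[
\sup_{|x|\le 2\ep}|v(x)|\le C\ep^{1-N/p}\|\nabla v\|_{L^p(B_{4\ep})},
\]
whose right-hand factor tends to $0$ by absolute continuity of the Lebesgue integral; analogous local Morrey estimates applied to $\nabla v$ handle the second-derivative cutoff terms in (v), and corresponding Sobolev interpolations close the remaining borderline terms in (iv).

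The main obstacle is precisely this last refinement in the Morrey cases: using only the global pointwise bound $|v(x)|\le C|x|^{1-N/p}$ yields $\int|v\,\nabla\eta_\ep|^p=O(1)$ and the argument stalls; one must replace the global constant by the localized gradient norm $\|\nabla v\|_{L^p(B_{4\ep})}\to 0$ (and, in (v), also by $\|D^2 v\|_{L^p(B_{4\ep})}\to 0$) to close the estimate. The direct-sum statements in (ii), (iv), (v) then follow, since the decomposition $u=v+(\text{finite-dimensional correction})$ is induced by the continuous functionals $u\mapsto u(0)$ and $u\mapsto\partial_j u(0)$.
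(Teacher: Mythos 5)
Your proof is correct, but it takes a genuinely different route from the paper's. The paper first reduces, by density, to $u\in C_c^\infty(\R^N)$, so that the corrected function $v=u-u(0)w_0-\sum_j u_{x_j}(0)w_j$ (with the appropriate terms dropped) is \emph{smooth}, and Taylor's theorem gives the clean bounds $|v(x)|\le C|x|$ in cases (ii), (iv) and $|v(x)|\le C|x|^2$, $|\nabla v(x)|\le C|x|$ in case (v); plain power counting with the cutoff $v_k(x)=\eta(kx)v(x)$ then yields strong convergence except at the critical exponents $p=N/2$ and $p=N$, where the sequence is merely bounded and the paper concludes by \emph{weak} convergence, using that the closed subspace $W^{k,p}_0$ is convex, hence weakly closed. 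You instead work with a general Sobolev function and replace the Taylor bounds by localized Morrey estimates such as $\sup_{|x|\le 2\ep}|v(x)|\le C\ep^{1-N/p}\|\nabla v\|_{L^p(B_{4\ep})}$, whose right-hand factors vanish by absolute continuity of the integral, and you keep strong convergence at the critical exponents via logarithmic cutoffs. Your diagnosis that the global Morrey bound $|v(x)|\le C|x|^{1-N/p}$ only gives $O(1)$ and must be localized is exactly right, and it is precisely what the paper's smooth reduction buys for free: for smooth $v$ with $v(0)=0$ one has the stronger rate $|v|\le C|x|$, so no localization is needed. What your route buys is strong convergence throughout (no appeal to weak closedness, so no reflexivity worry at $p=N$) and an argument that applies directly to $v\in W^{k,p}$; what the paper's route buys is much shorter estimates. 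Two small points. First, in case (iv) your phrase ``corresponding Sobolev interpolations'' should be made concrete: for $N/2<p<N$ one routes the local bound through $W^{2,p}\hookrightarrow W^{1,p^*}$ with $p^*=Np/(N-p)>N$, i.e.\ $|v(x)|\le C|x|^{2-N/p}\|\nabla v\|_{L^{p^*}(B_{4\ep})}$, and the localized $L^{p^*}$ norm of $\nabla v$ tends to $0$ (at $p=N$ use a large finite $q$ instead of $p^*$); as written this step is only gestured at. Second, a caveat shared with the paper: at the corner $N=2$, $p=1$ of case (iii) both arguments degenerate --- your logarithmic gain is $|\log\ep|^{1-p}=O(1)$ and the paper's weak-compactness step fails in $L^1$ --- and in fact there the evaluation $u\mapsto u(0)$ is continuous on $W^{2,1}(\R^2)$ (via $\|u\|_\infty\le\|\pa_1\pa_2 u\|_{L^1}$), so $w_0\notin W^{2,1}_0$ and the identity $W^{2,1}=W^{2,1}_0$ actually fails; this concerns the statement itself rather than your proof, and is immaterial for the paper, which applies the lemma only for $1<p<\infty$.
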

\begin{proof} We give a proof for $W^{2,p}$, that for $W^{1,p}$ is similar and easier.

Let $u \in C_c^\infty (\R^N)$ and define $v=u$ if $1\le p  \le N/2$,\  $v=u-u(0)w_0$ if $N/2 < p\le N$\  and $v=u-u(0)w_0 -\sum_i u_{x_i}(0)w_i$ if $p>N$. We have to prove that $v \in W^{2,p}_0$.

Fix $\eta\in C_c^\infty(\R^N;[0,1])$ such that
$\eta=0$ in $B(0,1)$ and $\eta=1$ in $\R^N\setminus B(0,2)$. 
Define $v_k(x):=\eta(kx)v(x)$, $k\in\N$. Clearly, we have $v_k\in C_c^\infty(\R^N\setminus\{0\})$ 
and $v_k\to v$ in $L^p$.
Using $|v(x)| \le C|x|$, $|x| \le 1$, for $N/2 <p \leq N$ and $|v(x)| \le C|x|^2$, $|\nabla v(x)| \le C|x|$, $|x| \le 1$, for $p>N$, one shows that $v_k \to v$ in $W^{2,p}$   (weakly for $p=N/2,N$) and concludes the proof.
\end{proof}

The above lemma is used to show that $I\otimes S$ is bounded in $W^{2,p}$. Since the proof employs 
the Calder\'on-Zygmund inequality, we exclude $p=1$.

\begin{lemma} \label{ItensorP}
Let $1<p<\infty$ and assume that $J$ is finite. Then the  projection $I\otimes S$  of Lemma \ref{projection} extends to a bounded projection of $W^{2,p}$. 
\end{lemma}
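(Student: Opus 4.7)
The plan is to exploit the fact that $I\otimes S$ commutes with the Laplacian on functions supported away from the origin, and then combine this with the Calder\'on–Zygmund characterization $\|u\|_{W^{2,p}}\sim \|u\|_p+\|\Delta u\|_p$ (valid for $1<p<\infty$, which is why $p=1$ is excluded) together with the decomposition provided by Lemma \ref{decomposition}.

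First I would work on the dense subspace $C_c^\infty(\R^N\setminus\{0\})$. For $u\in C_c^\infty(\R^N\setminus\{0\})$, by Remark after Lemma \ref{projection} we have $(I\otimes S)u\in C_c^\infty(\R^N\setminus\{0\})$, so in particular $(I\otimes S)u\in W^{2,p}$. Writing $u=\sum c_n(r)P_n(\omega)$ (a finite sum of smooth terms) and using formula (\ref{represent}) (with $c=0$, i.e.\ just for $\Delta$), a direct computation shows that
\begin{equation*}
\Delta\bigl[(I\otimes S)u\bigr]=(I\otimes S)\Delta u,
\end{equation*}
because $P_j$ is an eigenfunction of $\Delta_0$ and $I\otimes S$ only selects certain indices. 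Combining this identity with the $L^p$-boundedness of $I\otimes S$ (which is part of Lemma \ref{projection} since $J$ is finite) and the Calder\'on–Zygmund inequality yields
\begin{equation*}
\|(I\otimes S)u\|_{W^{2,p}}\le C\bigl(\|(I\otimes S)u\|_p+\|\Delta(I\otimes S)u\|_p\bigr)\le C'\bigl(\|u\|_p+\|\Delta u\|_p\bigr)\le C''\|u\|_{W^{2,p}}.
\end{equation*}

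By density this extends the bound to the closure $W^{2,p}_0$ of $C_c^\infty(\R^N\setminus\{0\})$ in $W^{2,p}$. To finish I would invoke Lemma \ref{decomposition} to write $W^{2,p}=W^{2,p}_0\oplus F$, where $F$ is either $\{0\}$, $\mathrm{span}\{w_0\}$, or $\mathrm{span}\{w_0,w_1,\dots,w_N\}$ depending on $p$. Since each $w_k$ (and hence $F$) is finite-dimensional and contained in $W^{2,p}$, it suffices to check that $(I\otimes S)$ maps $F$ into $W^{2,p}$. This is immediate once one notices that $w_0$ is radial while $w_j=r\omega_jw_0(r)$ is a radial function times a degree-one spherical harmonic; so $(I\otimes S)w_0\in\{w_0,0\}$ and $(I\otimes S)w_j$ is a linear combination of $w_1,\dots,w_N$, which all belong to $W^{2,p}$.

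I do not expect serious obstacles: the only delicate point is ensuring that the commutation $\Delta(I\otimes S)=(I\otimes S)\Delta$ is used only on functions vanishing near $0$, where the spherical-coordinate expression (\ref{represent}) is rigorously valid; the finite-dimensional remainder handled by Lemma \ref{decomposition} takes care of the possible boundary behaviour at the origin.
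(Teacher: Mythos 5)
Your argument is correct and is essentially the paper's intended proof: the paper omits the details but says immediately before the lemma that it follows from Lemma \ref{decomposition} together with the Calder\'on--Zygmund inequality, which is exactly your combination of the commutation $\Delta(I\otimes S)=(I\otimes S)\Delta$, the $L^p$-boundedness of $I\otimes S$ for finite $J$, density of $C_c^\infty(\R^N\setminus\{0\})$ in $W^{2,p}_0$, and the finite-dimensional complement ${\rm span}\{w_0,\dots,w_N\}$, on which $I\otimes S$ acts as you describe since $w_0$ is radial and the $w_j$ involve only degree-one spherical harmonics. One small slip: a general $u\in C_c^\infty(\R^N\setminus\{0\})$ is \emph{not} a finite sum $\sum c_n(r)P_n(\omega)$, but the commutation you need is precisely \eqref{commute} of Lemma \ref{heatLpn} (or can be recovered by approximating $u$ with the finite sums supported away from the origin provided by Lemma \ref{density} and passing to the limit using the $L^p$-boundedness of $I\otimes S$), so the argument stands.
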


The heat semigroup $e^{t \Delta}$ preserves the spaces $L^p_J$, for every $J \subset \N_0$.
\begin{lemma} \label{heatLpn}
Let $1 \le p \le \infty$, $J \subset \N_0$. Then  $e^{t \Delta }L^p_J \subset L^p_J$. If $J$ is finite, the  projection $I\otimes S$  of Lemma \ref{projection} satisfies for every $u \in L^p$
\begin{equation} \label{commute}
\Delta (I \otimes S) u=(I\otimes S)\Delta u, \  u \in W^{2,p} \qquad e^{t\Delta} (I\otimes S)u=(I\otimes S)e^{t\Delta} u,\ u \in L^p.
\end{equation}
\end{lemma}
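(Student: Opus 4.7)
My plan is to prove the invariance $e^{t\Delta} L^p_J \subset L^p_J$ by reducing to the dense subset of finite linear combinations $u = \sum_{j \in J} f_j(r) P_j(\omega)$ with $f_j \in C_c^\infty(]0,\infty[)$, and then for each summand to show $e^{t\Delta}(f(r)P_j(\omega)) \in L^p_{\{j\}}$. Since such $u$ also lies in $L^2$ and the orthogonal decomposition $L^2(\R^N) = \bigoplus_j L^2(]0,\infty[, r^{N-1}d\rho) \otimes span\{P_j\}$ is preserved by $\Delta$ — each summand being mapped into itself by the Bessel-type operator $f \mapsto f''+\frac{N-1}{r}f'-\frac{\lambda_j}{r^2}f$ with $\lambda_j$ the eigenvalue of $-\Delta_0$ on $P_j$, as read off from \eqref{represent} with $c=0$ — each summand is invariant under $e^{t\Delta}$ on $L^2$. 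Thus $e^{t\Delta}(f(r)P_j(\omega)) = g(r)P_j(\omega)$, and Fubini's theorem gives $\|e^{t\Delta}(fP_j)\|_{L^p}^p = \|g\|_{L^p(r^{N-1}d\rho)}^p \|P_j\|_{L^p(S^{N-1})}^p$, forcing $g \in L^p(r^{N-1}d\rho)$ and hence $e^{t\Delta}(f(r)P_j(\omega)) \in L^p_{\{j\}} \subset L^p_J$. Density and $L^p$-continuity of $e^{t\Delta}$ then extend the inclusion to all of $L^p_J$.

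For the commutation identities under finite $J$, I first prove $\Delta (I\otimes S) u = (I\otimes S) \Delta u$ for $u \in C_c^\infty(\R^N)$. Using the spherical-coordinates expression for $\Delta$, the representation $(I\otimes S) u = \sum_{j \in J} T_j u(r) P_j(\omega)$ from Lemma \ref{projection}, and integration by parts on $S^{N-1}$ together with $\Delta_0 P_j = -\lambda_j P_j$, both sides of the desired identity reduce to
\begin{equation*}
\sum_{j \in J} \left[(T_j u)''+\frac{N-1}{r}(T_j u)'-\frac{\lambda_j}{r^2}T_j u\right] P_j(\omega).
\end{equation*}
Since $I\otimes S$ is bounded on $W^{2,p}$ by Lemma \ref{ItensorP} and $C_c^\infty(\R^N)$ is dense in $W^{2,p}$, the identity extends to all $u \in W^{2,p}$.

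The commutation $e^{t\Delta}(I\otimes S)u = (I\otimes S) e^{t\Delta} u$ on $L^p$ is then a standard semigroup consequence of the above: for $u \in W^{2,p}=D(\Delta_p)$, the function $w(t) := (I\otimes S)e^{t\Delta} u$ satisfies $w(0) = (I\otimes S)u \in W^{2,p}$ and $w'(t) = (I\otimes S)\Delta e^{t\Delta} u = \Delta (I\otimes S) e^{t\Delta} u = \Delta w(t)$, so by uniqueness of the Cauchy problem $w(t) = e^{t\Delta}(I\otimes S)u$. Density of $W^{2,p}$ in $L^p$ together with boundedness of $I\otimes S$ and $e^{t\Delta}$ on $L^p$ extends the equality to all $u \in L^p$.

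The main obstacle lies in transferring the $L^2$ orthogonal-decomposition argument of the first step to $L^p$, since no such orthogonal decomposition is available for $p\neq 2$; this is circumvented by working with the dense set of finite sums (where the $L^2$ argument applies) and invoking Fubini to control the $L^p$-norm of the radial factor.
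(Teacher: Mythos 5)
Your parts 2 and 3 are essentially correct and well ordered: the spherical-coordinates computation showing that both $\Delta(I\otimes S)u$ and $(I\otimes S)\Delta u$ reduce to $\sum_{j\in J}\bigl[(T_ju)''+\frac{N-1}{r}(T_ju)'-\frac{\lambda_j}{r^2}T_ju\bigr]P_j$ on $C_c^\infty$, the extension to $W^{2,p}$ via the boundedness in Lemma \ref{ItensorP}, and the uniqueness-of-the-Cauchy-problem argument for $e^{t\Delta}(I\otimes S)=(I\otimes S)e^{t\Delta}$ are all sound. (The paper itself gives no in-text proof of this lemma, deferring to \cite[Section 2]{MSS-Rellich-disconti}, so you are being compared with the standard route.) The genuine gap is in part 1, at the sentence claiming that each summand of the $L^2$ decomposition ``is invariant under $e^{t\Delta}$ on $L^2$'' because the Bessel-type operator maps it into itself, ``as read off from \eqref{represent}''. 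Knowing that the generator maps (smooth elements of) a closed subspace $Y$ into $Y$ does \emph{not} imply that the semigroup leaves $Y$ invariant: semigroup invariance of a closed subspace is equivalent to resolvent invariance $(\lambda-\Delta)^{-1}Y\subset Y$, and the formal action of $\Delta$ recorded in \eqref{represent} carries no surjectivity information, so by itself it proves nothing about $e^{t\Delta}$. The fix is contained in your own later steps, but requires reordering: first prove, for the singleton $J=\{j\}$, the commutation $\Delta\Pi_j=\Pi_j\Delta$ on $W^{2,2}=D(\Delta_2)$ (your part 2, which uses nothing from part 1); since $\Pi_j$ is bounded and $\Pi_jD(\Delta_2)\subset D(\Delta_2)$, applying $\Pi_j$ to $(\lambda-\Delta)(\lambda-\Delta)^{-1}u=u$ gives $\Pi_j(\lambda-\Delta)^{-1}=(\lambda-\Delta)^{-1}\Pi_j$, hence $\Pi_je^{t\Delta}=e^{t\Delta}\Pi_j$; only then does $u=\Pi_ju$ imply $e^{t\Delta}u=\Pi_je^{t\Delta}u\in L^2_{\{j\}}$, and your Fubini and density steps go through. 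Alternatively, the classical Funk--Hecke/Hecke--Bochner argument does this in one stroke: the Gaussian kernel restricted to spheres depends only on $\omega\cdot\omega'$, hence acts as a scalar on each space of spherical harmonics of fixed degree, giving $e^{t\Delta}(fP_j)=g(r)P_j(\omega)$ directly in every $L^p$, $1\le p\le\infty$.

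A second, smaller point: the lemma is stated for $1\le p\le\infty$, and every extension step you use is density-based (density of finite tensor sums in $L^p_J$, of $C_c^\infty$ in $W^{2,p}$, of $W^{2,p}$ in $L^p$), so your argument silently excludes $p=\infty$; moreover Lemma \ref{ItensorP} is stated only for $1<p<\infty$, so your extension of the first identity in \eqref{commute} excludes $p=1$ as well (there a distributional-limit argument, using only the $L^1$-boundedness of $\Pi_j$ from Lemma \ref{projection}, suffices). Note finally that once $\Pi_je^{t\Delta}=e^{t\Delta}\Pi_j$ is known for every singleton, the invariance $e^{t\Delta}L^p_J\subset L^p_J$ for \emph{arbitrary} $J$ follows at once from the description \eqref{inclusione}, which by the remark after Lemma \ref{projection} holds for every $J$ without assuming a bounded projection: if $T_ju\equiv 0$ for $j\notin J$ then $T_j(e^{t\Delta}u)\equiv 0$ as well. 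This renders the Fubini computation and the sector-by-sector density argument unnecessary.
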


Next we define the spaces 
\[
W^{m,p}_{J}=W^{m,p} \cap L^p_{J}, \quad W^{m,p}_{\geq n}=W^{m,p} \cap L^p_{\geq n}, 
\]
 and we state the following density result. 

\begin{lemma} \label{density}
Let $1 \le p <  \infty$. Then $C_c^\infty(\R^N)$ functions of the form
\begin{equation}  \label{density-support}
v=\sum f_j(r) P_j(\omega),
\end{equation}
where the sums are finite and  $j \in J$, are dense in $W^{m,p}_J$ with respect to the Sobolev norm. If $u \in C_c^\infty (\R^N \setminus\{0\})$, the approximating functions of the form (\ref{density-support}) can be chosen to have support in $\R^N \setminus\{0\}$, too.
\end{lemma}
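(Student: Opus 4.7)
The plan is to build the approximation in three stages --- radial cutoff, radial mollification, spherical harmonic truncation --- while keeping every intermediate function inside $L^p_J$. The critical issue is that $L^p_J$ is not $\mathrm{SO}(N)$-invariant for general $J$ (only single-degree eigenspaces are), so the smoothing step must be chosen to respect the finer decomposition.

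First, a radial cutoff $\eta_n(x)=\eta(x/n)$ reduces to the case of compactly supported $u$: multiplication by a bounded radial function preserves $L^p_J$ directly from its tensor-product definition, and $u\eta_n\to u$ in $W^{m,p}$. Next, convolving with a radial mollifier $\rho_\ep$ produces $u_\ep:=u*\rho_\ep\in C_c^\infty(\R^N)\cap L^p_J$ converging to $u$ in $W^{m,p}$. Preservation of $L^p_J$ is the main subtle point: for any spherical harmonic $P_j$ and any $f$ one verifies $(f(|y|)P_j(\hat y))*\rho_\ep(x)=g(|x|)P_j(\hat x)$, because $\rho_\ep(|x-s\hat y|)$ depends on $\hat y$ only through $\hat x\cdot\hat y$ and the Funk--Hecke formula extracts the $P_j$-component. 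By linearity and $L^p$-density this extends to all of $L^p_J$.

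The third stage truncates the expansion $u_\ep(r\omega)=\sum_{j\in J}T_j u_\ep(r)P_j(\omega)$ with $T_j u_\ep(r)=\int_{S^{N-1}}u_\ep(r\omega)P_j(\omega)\,d\sigma(\omega)$; the constraint $u_\ep\in L^p_J$ kills the coefficients for $j\notin J$. Iterated integration by parts against $\Delta_0 P_j=-\lambda_{\deg P_j}P_j$ yields rapid decay of $T_j u_\ep$ and its $r$-derivatives in $\deg P_j$, uniformly in $r$. Extending $T_j u_\ep$ to $r\in\R$ shows it has parity $(-1)^{\deg P_j}$, and Taylor expansion at the origin forces vanishing of order $\deg P_j$ there, whence $T_j u_\ep(r)=r^{\deg P_j}h_j(r^2)$ with $h_j$ smooth. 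Thus each $T_j u_\ep(r)P_j(\omega)=h_j(|x|^2)H_j(x)$ for the harmonic polynomial $H_j(x)=r^{\deg P_j}P_j(\hat x)$, smooth on $\R^N$. Consequently the partial sum $u_\ep^M:=\sum_{j\in J,\, j\le M}T_j u_\ep(r)P_j(\omega)$ belongs to $C_c^\infty(\R^N)$ in the form \eqref{density-support}, and $u_\ep^M\to u_\ep$ in every $C^k$ on a common compact set, hence in $W^{m,p}$. A diagonal argument in $(\ep,M)$ concludes.

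For the last claim, if $u\in C_c^\infty(\R^N\setminus\{0\})$ has support in $\{a\le|x|\le b\}$ and $\ep<a/2$, the radial cutoff is unnecessary, $u_\ep$ stays supported in $\{a/2\le|x|\le b+\ep\}$, and each $T_j u_\ep$ inherits the same $r$-support, so the approximants $u_\ep^M$ lie in $C_c^\infty(\R^N\setminus\{0\})$. The main obstacle throughout is preservation of $L^p_J$ under smoothing; Funk--Hecke is the key tool, after which the truncation and diagonal steps are routine.
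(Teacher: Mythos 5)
Your argument is correct, and it is worth noting that the paper itself gives no in-text proof of this lemma: it defers to \cite[Section 2]{MSS-Rellich-disconti} and the companion results in \cite{MSS-disconti-gen}, which argue along the same standard lines (cutoff, smoothing that respects the harmonic decomposition, truncation of the expansion). So your proposal is essentially a self-contained version of the outsourced proof rather than a new route. You correctly identified the one genuinely delicate point, namely that $J$ indexes individual harmonics rather than full degree eigenspaces, so rotation-invariance of radial convolution is not by itself enough; the Funk--Hecke observation that a radial kernel acts on each eigenspace by a scalar depending only on the degree, hence maps $f(|y|)P_j(\hat y)$ to $g(|x|)P_j(\hat x)$, settles this, and it is the exact analogue of the paper's Lemma \ref{heatLpn} for $e^{t\Delta}$ (which is itself convolution with a radial kernel). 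Your use of \eqref{inclusione} to kill the coefficients with $j\notin J$ is legitimate since, as the paper's Remark notes, that identity holds without assuming a bounded projection. The only step you gloss is the passage from rapid decay of $T_ju_\ep$ and its $r$-derivatives to convergence of the partial sums in $C^k$ of a neighbourhood of the origin: uniform-in-$r$ bounds on radial and angular derivatives degenerate when converted to Cartesian derivatives at $r=0$ (factors of $r^{-1}$ appear), so one must either exploit the representation $T_ju_\ep(r)P_j(\omega)=h_j(|x|^2)H_j(x)$ together with polynomial-in-degree bounds such as $\|P_n\|_{L^\infty(S^{N-1})}\leq Cn^{(N-2)/2}$ and Bernstein-type estimates for derivatives, or compare $D^\beta$ of the partial sums with partial sums of the expansion of $D^\beta u_\ep$ (differentiation shifts harmonic degrees by at most $|\beta|$, leaving boundary terms that vanish by the same decay). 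Since the decay in the degree is faster than any polynomial while all these growth factors are polynomial, this is routine to fill in, but it deserves a sentence; as written, ``hence in $W^{m,p}$'' hides it. With that line added, the proof, including the support statement for $u\in C_c^\infty(\R^N\setminus\{0\})$, is complete.
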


\begin{proposition} \label{density1}
Let $1 \le p < \infty$. Then $C^\infty_c(\R^N \setminus \{0\}) \cap L^p_{\geq 1}$ is dense in $W^{1,p}_{\geq 1}$ and $C^\infty_c(\R^N \setminus \{0\}) \cap L^p_{\geq 2}$ is dense in  $W^{2,p}_{\geq 2}$. In particular $W^{1,p}_{ \geq 1} \subset W^{1,p}_0$ and $W^{2,p}_{ \geq 2} \subset W^{2,p}_0$ .
\end{proposition}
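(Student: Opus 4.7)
The plan is to use Lemma \ref{density} to reduce the density assertions to the following statement: every $v\in C_c^\infty(\R^N)$ of the form $v=\sum_{j\in J} f_j(r)P_j(\omega)$ (finite sum, with $\deg P_j\ge 1$ in the first case, $\deg P_j\ge 2$ in the second) can be approximated in $W^{1,p}$ (resp.\ $W^{2,p}$) by functions in $C_c^\infty(\R^N\setminus\{0\})$. Given that, the inclusions $W^{1,p}_{\ge 1}\subset W^{1,p}_0$ and $W^{2,p}_{\ge 2}\subset W^{2,p}_0$ are immediate from the definition of $W^{m,p}_0$.

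First I would establish the decay of $v$ at the origin. By construction $v(r\cdot)$ is $L^2(S^{N-1})$-orthogonal to $P_0$ for every $r>0$, so $\int_{S^{N-1}} v(r\omega)\,d\sigma(\omega)=0$. Applying Taylor's theorem to the smooth function $v$ at $0$ and integrating on $S^{N-1}$ kills the odd terms in $r$ and gives $v(0)\,|S^{N-1}|+O(r^2)=0$, whence $v(0)=0$ and $|v(x)|\le C|x|$ near $0$. In the second-order case $v(r\cdot)$ is additionally orthogonal to each $\omega_i$; the analogous expansion forces $\nabla v(0)=0$, so $|v(x)|\le C|x|^2$ and $|\nabla v(x)|\le C|x|$ near $0$.

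Next I would introduce a smooth radial cutoff $\eta$ with $\eta=0$ on $B$ and $\eta=1$ on $\R^N\setminus B_2$ and set $v_k(x):=\eta(kx)v(x)$. Then $v_k\in C_c^\infty(\R^N\setminus\{0\})$ and, since $\eta$ is radial, the spherical-harmonic structure is preserved, so $v_k$ still lies in the appropriate $L^p_{\ge n}$. Convergence $v_k\to v$ in $W^{m,p}$ is a routine Leibniz computation: derivatives falling on the cutoff satisfy $|\nabla^\ell \eta(k\cdot)|\le Ck^\ell \mathbf 1_{B_{2/k}}$; combined with the pointwise bounds $|v|\le C|x|$ (resp.\ $|v|\le C|x|^2$, $|\nabla v|\le C|x|$), each such error term is pointwise $O(1)$ on $B_{2/k}$, hence has $L^p$ norm $O(k^{-N/p})\to 0$ because $p<\infty$. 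The remaining terms of the form $\eta(k\cdot)\nabla^\ell v$ converge to $\nabla^\ell v$ by dominated convergence.

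The main obstacle, and the only non-bookkeeping step, is identifying the correct vanishing order of $v$ at the origin: it is precisely this vanishing that compensates for the blow-up of the derivatives of the cutoff and makes the error terms $o(1)$ in $L^p$. Everything else is a standard truncation argument together with Lemma \ref{density}.
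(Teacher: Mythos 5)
Your proposal is correct. For comparison: the paper does not actually prove Proposition \ref{density1} at all --- it defers to \cite[Corollary 2.10]{MSS-disconti-gen} and \cite[Proposition 3.6]{MSS-disconti-gen} --- so your argument is a self-contained substitute rather than a reproduction. Structurally it is the same truncation technique the paper uses internally in the proof of Lemma \ref{decomposition}: cut off with $v_k(x)=\eta(kx)v(x)$ and trade the $k^\ell$ blow-up of the cutoff derivatives against the vanishing order of $v$ at the origin. Your genuinely useful contribution is the first step: deducing the vanishing order not from subtracting a finite-dimensional correction (as in Lemma \ref{decomposition}) but from the orthogonality of $v(r\cdot)$ to the low-order spherical harmonics --- $\int_{S^{N-1}}v(r\omega)\,d\sigma=0$ forces $v(0)=0$, and orthogonality to the $\omega_i$ additionally forces $\nabla v(0)=0$ --- combined with the reduction via Lemma \ref{density}, which legitimately supplies smooth compactly supported approximants of the special form $\sum f_j(r)P_j(\omega)$ so that the Taylor-plus-integration argument applies. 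Note what this buys: because in the $W^{2,p}_{\geq 2}$ case you get the full quadratic vanishing $|v|\le C|x|^2$, $|\nabla v|\le C|x|$, all error terms are pointwise $O(1)$ on $B_{2/k}$ and hence $O(k^{-N/p})$ in $L^p$, giving \emph{strong} convergence for every $1\le p<\infty$; by contrast, in Lemma \ref{decomposition} the function only vanishes to first order when $N/2<p\le N$, the term $k^2(D^2\eta)(k\cdot)v$ is merely $O(k^{1-N/p})$ in $L^p$, and the paper must settle for weak convergence at the borderline exponents $p=N/2,N$. Your observation that the radial cutoff preserves the spherical-harmonic structure, so that $v_k$ stays in $L^p_{\geq n}$, correctly closes the remaining requirement that the approximants lie in $C_c^\infty(\R^N\setminus\{0\})\cap L^p_{\geq n}$, and the inclusions $W^{1,p}_{\geq 1}\subset W^{1,p}_0$, $W^{2,p}_{\geq 2}\subset W^{2,p}_0$ then follow exactly as you say.
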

This is proved in \cite[Corollary 2.10]{MSS-disconti-gen} for $W^{1,p}_{ \geq 1}$  and in \cite[Proposition 3.6]{MSS-disconti-gen} for $W^{2,p}_{\geq 2}$.

\smallskip

Finally, we quote a well-known result (see \cite[Lemma 2.11]{MSS-disconti-gen} for a proof). It says that the integration by parts formula holds for  $\phi \in C_c^\infty(\R^N)$ whenever it holds for every $\phi \in C_c^\infty (\R^N \setminus \{0\})$, $N \geq 2$. This  is  false if $N=1$.
\begin{lemma} \label{r2}
If $1 \le p < \infty$ and $N \ge 2$, then $W^{k,p}(\R^N \setminus\{0\})=W^{k,p}(\R^N)$.
\end{lemma}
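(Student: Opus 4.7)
The inclusion $W^{k,p}(\R^N)\subset W^{k,p}(\R^N\setminus\{0\})$ is trivial by restriction, so the task is to prove the converse. Given $u\in W^{k,p}(\R^N\setminus\{0\})$ with weak derivatives $v_\alpha\in L^p$ for $|\alpha|\le k$ (which are elements of $L^p(\R^N)$ since $\{0\}$ has Lebesgue measure zero), what must be verified is the integration-by-parts identity
\[
\int_{\R^N} u\,D^\alpha\phi\,dx=(-1)^{|\alpha|}\int_{\R^N}v_\alpha\phi\,dx
\]
for every $\phi\in C_c^\infty(\R^N)$, given that it already holds for every test function supported in $\R^N\setminus\{0\}$.

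The natural strategy is a cut-off argument. I would fix $\eta\in C_c^\infty(\R^N;[0,1])$ with $\eta=0$ on $B_1$ and $\eta=1$ outside $B_2$, and set $\eta_n(x)=\eta(nx)$, so that $\eta_n\phi\in C_c^\infty(\R^N\setminus\{0\})$ and the hypothesis applies to $\eta_n\phi$. Expanding $D^\alpha(\eta_n\phi)$ by the Leibniz rule, the principal term $\int u\eta_n D^\alpha\phi\,dx$ and the right-hand side $\int v_\alpha\eta_n\phi\,dx$ converge to the desired quantities by dominated convergence, so everything reduces to showing that the cross-terms
\[
I_{n,\beta}=\int u\,D^\beta\eta_n\,D^{\alpha-\beta}\phi\,dx,\qquad 0<|\beta|\le|\alpha|,
\]
tend to zero as $n\to\infty$. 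Using $\|D^\beta\eta_n\|_{L^{p'}}\le C n^{|\beta|-N/p'}$ together with H\"older one has $|I_{n,\beta}|\le C n^{|\beta|-N/p'}\|u\|_{L^p(B_{2/n})}$, which already suffices in the range $|\beta|\,p'\le N$, because the power of $n$ is then non-positive while $\|u\|_{L^p(B_{2/n})}\to 0$ by dominated convergence.

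I expect the main obstacle to be the opposite regime $|\beta|\,p'>N$, in which the factor $n^{|\beta|-N/p'}$ diverges. I would resolve it in one of two ways: either by replacing $\eta_n$ with a logarithmic cut-off interpolating linearly in $\log|x|$ between two concentric spheres (a construction which exploits $N\ge 2$ to keep its derivative norms small), or by bootstrapping the regularity of $u$ --- since $u\in W^{k,p}$ on annuli away from $0$, Sobolev embedding yields an improved decay $\|u\|_{L^q(B_{2/n})}$ with $q>p$, and redoing the H\"older estimate with the exponent pair $(q,q')$ closes the argument. The essential use of $N\ge 2$ reflects the vanishing $(j,q)$-capacity of an isolated point in $\R^N$ in the relevant range, precisely the property that fails when $N=1$.
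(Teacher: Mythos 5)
Your setup (restriction inclusion trivial, reduction to the integration-by-parts identity, cut-off $\eta_n$, Leibniz expansion, and the estimate $\|D^\beta\eta_n\|_{L^{p'}}\le Cn^{|\beta|-N/p'}$ handling the regime $|\beta|p'\le N$) is correct; note that the paper itself gives no proof of this lemma but cites \cite[Lemma 2.11]{MSS-disconti-gen}. The gap is in the hard regime, where neither of your two proposed remedies works as stated. The logarithmic cut-off cannot repair the case $p'>N$ (i.e.\ $1\le p<\frac{N}{N-1}$, which is exactly where your first estimate fails already for $|\beta|=1$; at $p=1$ \emph{every} cross term is in the hard regime): by Morrey's inequality any $\eta_n$ vanishing near $0$ satisfies $|\eta_n(x)|\le C|x|^{1-N/p'}\|\nabla\eta_n\|_{L^{p'}}$, so if $\eta_n\to 1$ a.e.\ then $\|\nabla\eta_n\|_{L^{p'}}\to\infty$. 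A point has \emph{positive} $p'$-capacity in this range — the log cut-off keeps gradients small only up to the endpoint $p'=N$ — so your capacity heuristic is inverted: in the problematic regime the capacity does not vanish, and no choice of cut-off can make the Hölder pairing $(p,p')$ work.

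Your second remedy (bootstrap) is the right idea but is only carried out far enough for $k=1$: with Sobolev embedding on the punctured ball (which has the cone property, so $u\in L^{p^*}$ near $0$), the pair $(p^*,(p^*)')$ gives $|I_{n,\beta}|\lesssim n^{-N/p'}\|u\|_{L^{p^*}(B_{2/n})}\to 0$ for $|\beta|=1$, including $p=1$ where the power of $n$ is zero but the norm factor is $o(1)$. For $k\ge 2$, however, your direct order-$k$ Leibniz expansion leaves terms with $|\beta|\ge 2$ that improved integrability of $u$ alone cannot beat: take $N=2$, $k=2$, $p=1$; then even the best case $u\in L^\infty$ gives $\|D^\beta\eta_n\|_{L^1}\approx n^{|\beta|-2}=O(1)$ for $|\beta|=2$, while $\|u\|_{L^\infty(B_{2/n})}\to|u(0)|\neq 0$ in general, so the estimate does not vanish; one would need the cancellation $\int D^\beta\eta_n=0$ together with a Taylor subtraction, which you do not perform. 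The standard fix, which your sketch omits, is to prove only the case $k=1$ and then induct, applying the first-order statement to each $D^\gamma u$ with $|\gamma|\le k-1$ (all of which lie in $W^{1,p}(\R^N\setminus\{0\})$); equivalently, integrate by parts on $\R^N\setminus\{0\}$ so that only first derivatives of $\eta_n$ ever hit the cut-off. Alternatively, the cleanest classical route avoids cut-offs entirely: by the ACL/Fubini characterization of Sobolev functions, almost every line parallel to a coordinate axis misses the origin when $N\ge 2$, so the one-dimensional integration by parts holds on almost every line and integrates up; this proves the lemma for all $1\le p<\infty$ in one stroke and makes transparent why the statement fails for $N=1$.
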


\section{Hardy inequalities}
We recall the  classical Hardy inequality
\begin{theorem}\label{lem:Hardy}
If $1 \leq p<N$, then for every $u\in W^{1,p}$, 
\[
\left\|\frac{u}{|x|}\right\|_{L^p}
\leq 
\frac{p}{N-p}
\left\|\nabla u\right\|_{L^p}.
\]
\end{theorem}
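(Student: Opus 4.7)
The plan is the classical integration-by-parts argument. I would first establish the inequality for $u\in C_c^\infty(\R^N\setminus\{0\})$ and then extend it to $W^{1,p}$ by density, the density step being furnished by Lemma \ref{decomposition}(i), which gives $W^{1,p}=W^{1,p}_0$ for $1\le p\le N$.

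For $u\in C_c^\infty(\R^N\setminus\{0\})$ the starting point is the identity
$$\operatorname{div}\!\left(\frac{x}{|x|^p}\right)=\frac{N-p}{|x|^p},\qquad x\neq 0.$$
Multiplying by $|u|^p$ and integrating over $\R^N$, one integrates by parts legitimately because $u$ vanishes in a neighbourhood of $0$ and near infinity, obtaining
$$(N-p)\int_{\R^N}\frac{|u|^p}{|x|^p}\,dx = -p\int_{\R^N}\frac{x\cdot\nabla u}{|x|^p}\,|u|^{p-2}u\,dx.$$
The minor technical point here is that $|u|^p$ is not smooth at zeros of $u$ when $1\le p<2$; I would sidestep this by using the regularisation $(u^2+\ep)^{p/2}$ in place of $|u|^p$ and letting $\ep\to 0^+$ at the end. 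Applying the pointwise bound $|x\cdot\nabla u|\le |x|\,|\nabla u|$ followed by H\"older's inequality with exponents $p/(p-1)$ and $p$ on the right-hand side yields
$$(N-p)\int_{\R^N}\frac{|u|^p}{|x|^p}\,dx \le p\left(\int_{\R^N}\frac{|u|^p}{|x|^p}\,dx\right)^{\!(p-1)/p}\!\left(\int_{\R^N}|\nabla u|^p\,dx\right)^{\!1/p}.$$
Since $u$ is smooth with compact support away from the origin, the integral $\int |u|^p/|x|^p$ is finite, so I may divide both sides by its $(p-1)/p$ power to obtain the Hardy inequality on $C_c^\infty(\R^N\setminus\{0\})$.

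For the general case, pick a sequence $u_k\in C_c^\infty(\R^N\setminus\{0\})$ with $u_k\to u$ in $W^{1,p}$ via Lemma \ref{decomposition}(i); extracting a subsequence converging almost everywhere and applying Fatou's lemma to the left-hand side while using the convergence $\|\nabla u_k\|_{L^p}\to\|\nabla u\|_{L^p}$ on the right delivers the inequality for every $u\in W^{1,p}$. There is no real obstacle: the only care needed is in the regularisation used to perform the integration by parts, and in choosing Fatou (rather than dominated convergence) when passing to the limit, so as to accommodate the case where the left-hand side might a priori be infinite before the bound is established.
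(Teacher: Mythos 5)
Your proof is correct and follows essentially the same route the paper indicates: the divergence identity $|x|^{-p}=(N-p)^{-1}\operatorname{div}(x|x|^{-p})$, integration by parts, H\"older's inequality, and density of $C_c^\infty(\R^N\setminus\{0\})$ in $W^{1,p}$ for $p\le N$ (Lemma \ref{decomposition}(i)), exactly as sketched in the remark following the theorem. The only point to watch is that $(u^2+\ep)^{p/2}$ is not compactly supported, so you should either subtract $\ep^{p/2}$ or integrate over a fixed ball containing $\operatorname{supp} u$ and let $\ep\to 0$ first, after which everything goes through as you wrote.
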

It clearly fails when $p \geq N$ since $|x|^{-p}$ is not locally integrable; however it  holds for $p \neq N$ for functions with compact support in $\R^N \setminus \{0\}$. Just write 
$|x|^{-p}=(N-p)^{-1}{\rm div}(x|x|^{-p})$, integrate by parts and use H\"older inequality.

 It is less known that Hardy inequality always holds for functions with  zero mean.
\begin{proposition}\label{prop:Hardy}
If $1 \leq p<\infty$, then there exists $C_p>0$ such that for every $u\in W^{1,p}_{ \geq 1}$, 
\[
\left\|\frac{u}{|x|}\right\|_{L^p}
\leq 
C_p
\left\|\nabla u\right\|_{L^p}.
\]
\end{proposition}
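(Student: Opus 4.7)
My plan is to reduce, via the density result of Proposition \ref{density1}, to functions $u \in C_c^\infty(\R^N \setminus \{0\}) \cap L^p_{\geq 1}$ and then apply the Poincar\'e--Wirtinger inequality on the unit sphere $S^{N-1}$ fibrewise in $r$. The zero-mean condition on each sphere is precisely the characterization of $L^p_{\geq 1}$ furnished by (\ref{inclusione}), so the inequality I need on each fibre matches a standard Poincar\'e estimate on a compact manifold.

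With $u$ in this dense subspace and $r>0$ fixed, the function $\omega \mapsto u(r\omega)$ lies in $C^\infty(S^{N-1})$ and has vanishing spherical mean. I would invoke the Poincar\'e--Wirtinger inequality on the compact manifold $S^{N-1}$, which holds for every $1\leq p <\infty$ by a standard Rellich--Kondrachov compactness argument, to obtain $C_p=C_p(N)$ with
\begin{equation*}
\int_{S^{N-1}}|u(r\omega)|^p\, d\sigma(\omega) \leq C_p \int_{S^{N-1}} |\nabla_\omega u(r\omega)|^p \, d\sigma(\omega).
\end{equation*}
From the polar decomposition of the gradient that underlies Section \ref{preliminaries}, namely $|\nabla u|^2 = |\partial_r u|^2 + r^{-2}|\nabla_\omega u|^2$, one reads off $|\nabla_\omega u(r\omega)|\leq r|\nabla u(r\omega)|$; multiplying the previous display by $r^{N-1-p}$ and integrating in $r\in (0,\infty)$ delivers the desired bound on the dense subspace. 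Passing to general $u\in W^{1,p}_{\geq 1}$ is then routine via Fatou's lemma on the left-hand side.

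The hard part is conceptually absent: the Poincar\'e--Wirtinger inequality on $S^{N-1}$ is insensitive to whether $p<N$, $p=N$, or $p>N$, whereas the direct integration-by-parts proof of the classical Hardy inequality in Theorem \ref{lem:Hardy} introduces the factor $(N-p)^{-1}$ that breaks down exactly at $p=N$ and fails for $p>N$. In the mean-zero setting the required bound comes instead from the spectral gap of $-\Delta_0$ on mean-zero functions on $S^{N-1}$, which is uniform in $p$, and this is precisely what the orthogonality to constants encoded in $L^p_{\geq 1}$ unlocks.
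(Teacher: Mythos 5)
Your proof is correct and takes essentially the same route as the paper's: the fibrewise Poincar\'e--Wirtinger inequality on $S^{N-1}$ for mean-zero functions (zero spherical mean being exactly the characterization of $L^p_{\geq 1}$ from (\ref{inclusione})), the pointwise bound $|\nabla_\tau u(r\omega)|\leq r\,|\nabla u(r\omega)|$ from the polar decomposition of the gradient, and integration in $r$ against $r^{N-1}$. The only difference is that you spell out the density reduction via Proposition \ref{density1} and the Fatou passage to the limit, which the paper leaves implicit.
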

\begin{proof}
We use Poincar\`e inequality on the sphere $S^{N-1}$
$$\int_{S^{N-1}} |u(r\omega)|^p d\sigma(\omega) \leq C_p \int_{S^{N-1}} |\nabla_\tau u(r\omega)|^p d\sigma(\omega),$$
where $\nabla_\tau$ is the tangential gradient, since the function $u$ has zero mean for every fixed $r>0$. Since $|\nabla u|^2=u_r^2+r^{-2}|\nabla_\tau u|^2$ we get 
$$\int_{S^{N-1}} \frac{|u(r\omega)|^p}{r^p} d\sigma(\omega) \leq C_p \int_{S^{N-1}} |\nabla u(r\omega)|^p d\sigma(\omega),$$
and now it is sufficient to multiply both sides by $r^{N-1}$ and integrate it over $(0, \infty)$. \end{proof}

We consider also weaker versions of Hardy inequalities with the power $|x|^{-\alpha}$, $0 <\alpha \leq 1$.

\begin{lemma} \label{scaling}
If for $0<\alpha \leq 1$ the inequality 
$\left \|\frac{u}{|x|^\alpha}\right \|_p \leq C(\|u\|_p+\|\nabla u\|_p)$ holds in $W^{1,p}$, then the multiplicative inequality
$$\left \|\frac{u}{|x|^\alpha}\right \|_p \leq C\|u\|_p^{1-\alpha}\|\nabla u\|^\alpha_p$$
 actually holds.
\end{lemma}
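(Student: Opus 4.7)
The plan is the standard scaling/dilation argument. Given $u \in W^{1,p}$ and $\lambda > 0$, I define $u_\lambda(x) = u(\lambda x)$ and record the three norms under this dilation:
\begin{gather*}
\|u_\lambda\|_p = \lambda^{-N/p}\|u\|_p, \qquad
\|\nabla u_\lambda\|_p = \lambda^{1-N/p}\|\nabla u\|_p, \\
\bigl\||x|^{-\alpha} u_\lambda\bigr\|_p = \lambda^{\alpha - N/p}\bigl\||x|^{-\alpha} u\bigr\|_p,
\end{gather*}
obtained by the change of variables $y=\lambda x$. Plugging $u_\lambda$ into the assumed inequality and dividing through by $\lambda^{\alpha - N/p}$ gives
\[
\bigl\||x|^{-\alpha} u\bigr\|_p \leq C\bigl(\lambda^{-\alpha}\|u\|_p + \lambda^{1-\alpha}\|\nabla u\|_p\bigr)
\]
for every $\lambda>0$, so the common $\lambda^{-N/p}$ factor disappears and only the homogeneities $-\alpha$ and $1-\alpha$ are left — precisely the right split for the multiplicative inequality.

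Next I optimize in $\lambda$. For $0<\alpha<1$ the natural choice $\lambda=\|u\|_p/\|\nabla u\|_p$ makes both summands equal to $\|u\|_p^{1-\alpha}\|\nabla u\|_p^{\alpha}$, which gives the claimed estimate (up to adjusting $C$). The borderline $\alpha=1$ is even simpler: the second term no longer depends on $\lambda$, so letting $\lambda\to\infty$ kills the first term and produces $\||x|^{-1}u\|_p \leq C\|\nabla u\|_p$, which is the correct multiplicative statement with exponents $1-\alpha=0$ and $\alpha=1$.

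I would also note two minor technicalities, neither of which is a real obstacle. First, the argument a priori requires $\|u\|_p>0$ and $\|\nabla u\|_p>0$; the degenerate cases $\|\nabla u\|_p=0$ (so $u\equiv 0$) and $\|u\|_p=0$ are trivial. Second, the assumed inequality is stated on $W^{1,p}$, and $u_\lambda$ lies there whenever $u$ does, so the substitution is legitimate. The only ``hard'' step is really the bookkeeping of the scaling exponents, and the key observation is that the ratio $(1-\alpha)-(-\alpha)=1$ is exactly what allows one to balance the two terms; nothing deeper is needed.
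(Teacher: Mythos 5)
Your proof is correct and follows essentially the same route as the paper: apply the hypothesis to $u_\lambda(x)=u(\lambda x)$, cancel the common factor $\lambda^{-N/p}$ to obtain $\||x|^{-\alpha}u\|_p \leq C(\lambda^{-\alpha}\|u\|_p+\lambda^{1-\alpha}\|\nabla u\|_p)$, and minimize over $\lambda>0$. Your explicit treatment of the boundary case $\alpha=1$ and of the degenerate norms is a welcome bit of extra care, but it does not change the argument.
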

{\sc Proof.} Just apply the hypothesis to $u_\lambda (x)=u(\lambda x)$ to get 
$$\left \|\frac{u}{|x|^\alpha}\right \|_p \leq C(\lambda^{-\alpha}\|u\|_p+\lambda^{1-\alpha}\|\nabla u\|_p)$$ and then minimize over $\lambda>0$.
\qed

The same scaling as above shows that no Hardy-type inequality can hold if $\alpha>1$ on a subspace of $W^{1,p}$ which is invariant under dilations.
\begin{proposition} \label{scaling}
If for $0<\alpha \leq 1$ the inequality 
$$\left \|\frac{u}{|x|^\alpha}\right \|_p \leq C\|u\|_p^{1-\alpha}\|\nabla u\|^\alpha_p$$
holds in $W^{1,p}$ if and only if $\alpha p <N$.
\end{proposition}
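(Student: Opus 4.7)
I will prove the biconditional in two directions.

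For the ``$\Leftarrow$'' direction (that $\alpha p<N$ suffices), I first reduce to proving the additive form $\|u/|x|^\alpha\|_p \leq C(\|u\|_p+\|\nabla u\|_p)$: the elementary bound $a^{1-\alpha}b^\alpha \leq a+b$ shows the multiplicative inequality implies the additive one, while Lemma \ref{scaling} gives the converse. To prove the additive version I pick a smooth cutoff $\chi\in C_c^\infty(\R^N)$ with $\chi\equiv 1$ on $B_1$ and $\chi$ supported in $B_2$, and split $u=\chi u+(1-\chi)u$. The far part $(1-\chi)u$ is supported where $|x|^{-\alpha}\leq 1$, so it contributes at most $\|u\|_p$. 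For the near part $u_1:=\chi u\in W^{1,p}$ supported in $B_2$, I split into two regimes. If $\alpha=1$ (which forces $p<N$), the classical Hardy inequality (Theorem \ref{lem:Hardy}) applied to $u_1$ gives $\|u_1/|x|\|_p\leq C\|\nabla u_1\|_p \leq C\|u\|_{W^{1,p}}$. If $0<\alpha<1$, I apply H\"older with conjugate exponents $q,q'$:
\[
\int_{B_2}|u_1|^p|x|^{-\alpha p}\,dx \leq \|u_1\|_{L^{pq}(B_2)}^p\Bigl(\int_{B_2}|x|^{-\alpha pq'}\,dx\Bigr)^{1/q'}.
\]
The hypothesis $\alpha p<N$ makes the interval $q'\in(1,N/(\alpha p))$ non-empty, so the weight is locally integrable; Sobolev embedding controls $\|u_1\|_{L^{pq}(B_2)}$ (using $p^\ast=Np/(N-p)$ if $p<N$, any finite $L^r$ if $p=N$, or simply $L^\infty$ if $p>N$, in which last case no H\"older splitting is needed at all).

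For the ``$\Rightarrow$'' direction I argue by contrapositive: if $\alpha p\geq N$, pick any $\varphi\in C_c^\infty(\R^N)$ with $\varphi(0)\neq 0$. Then $|\varphi|^p$ is bounded below by a positive constant on some ball $B_\varepsilon$, so
\[
\int|\varphi|^p|x|^{-\alpha p}\,dx \geq c\int_{B_\varepsilon}|x|^{-\alpha p}\,dx = \infty,
\]
since $|x|^{-\alpha p}$ fails to be locally integrable at the origin when $\alpha p\geq N$. But $\varphi\in W^{1,p}$ has finite $L^p$ and gradient norms, so the multiplicative inequality cannot hold.

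The main obstacle lies in the near-origin estimate in the sub-case $0<\alpha<1$ with $p\geq N$, where classical Hardy is no longer available. One must balance two competing constraints on the H\"older exponent $q'$: local integrability of the weight requires $q'<N/(\alpha p)$, while the Sobolev exponent on $pq$ constrains how large $q'$ may be chosen. The hypothesis $\alpha p<N$ is used precisely to guarantee that these two ranges overlap; for $p\geq N$ the Sobolev range is essentially unrestricted, so only the weight constraint is binding and the hypothesis $\alpha p<N$ is exactly what makes the argument go through.
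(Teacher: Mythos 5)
Your proposal is correct and follows essentially the same route as the paper: necessity from the failure of local integrability of $|x|^{-\alpha p}$, and sufficiency by proving the additive inequality via a near/far splitting (classical Hardy where available, H\"older plus Sobolev--Morrey embedding for the remaining exponents) and then passing to the multiplicative form through the scaling lemma, exactly as the paper does; your organizing the cases by $\alpha$ rather than by $p$ is only cosmetic. One small inaccuracy in your closing commentary: for $p<N$ the Sobolev requirement $pq\le p^{*}$ bounds $q'$ from \emph{below} (it gives $q'\ge N/p$), not above, and there the overlap with $q'<N/(\alpha p)$ is guaranteed by $\alpha<1$ (with $\alpha p<N$ then automatic), while it is only in the regime $p\ge N$ that $\alpha p<N$ itself is what makes an admissible $q'$ exist.
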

\begin{proof}
The condition $\alpha p<N$ is necessary since, otherwise, $|x|^{-\alpha p}$ is not locally integrable. Let us then assume it. If $p<N$ we use the classical Hardy inequality to have (here $B$ is the unit ball)
$$
\||x|^{-\alpha} u\|_p \leq \||x|^{-\alpha} u\|_{L^p(B)}+\||x|^{-\alpha} u\|_{L^p(B^c)} \leq \||x|^{-1} u\|_{L^p(B)}+\|u\|_p \leq C(\|\nabla u \|_p +\|u\|_p).
$$
If $N<p<\frac N \alpha$ we estimate the term $\||x|^{-\alpha} u\|_{L^p(B)}$ by $\|u\|_\infty \||x|^{-\alpha} \|_{L^p(B)} \leq C\|u\|_{W^{1,p}}$, by Sobolev embedding and, if $p=N$ we do the same with a large exponent $q$ such that $\alpha p \left (\frac{q}{p} \right )'<N$. 

In all cases we obtain $\||x|^{-\alpha} u\|_p \leq C\|u\|_{W^{1,p}}$ and conclude by the previous lemma.
\end{proof}

\section{Rellich inequalities}

Okazawa \cite{Okazawa1996} and  Davies--Hinz \cite{DaviesHinz1998}  proved  the following Rellich inequalities in $L^p$.

\begin{theorem}\label{lem:Rellich}
If $1<p<\frac{N}{2}$, then for every $u\in W^{2,p}(\R^N)$, 
\[
\left\|\frac{u}{|x|^2}\right\|_{L^p}
\leq 
\frac{p^2}{N(p-1)(N-2p)}
\left\|\Delta u\right\|_{L^p}.
\]
\end{theorem}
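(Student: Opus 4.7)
The plan is to reduce to smooth functions by density, derive an integration-by-parts identity, and combine it with a weighted Hardy inequality to absorb the gradient term and reach the sharp constant.

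Since $1<p<N/2$, Lemma~\ref{decomposition}(iii) gives $W^{2,p}=W^{2,p}_0$, so by continuity of both sides in the Sobolev norm it suffices to prove the bound for real-valued $u\in C_c^\infty(\R^N\setminus\{0\})$ (the complex case following by splitting into real and imaginary parts). For such $u$, I would consider the dual pairing
\[
I:=\int_{\R^N}(-\Delta u)\,|u|^{p-2}u\,|x|^{-2(p-1)}\,dx,
\]
and integrate by parts twice using $\nabla(|u|^{p-2}u)=(p-1)|u|^{p-2}\nabla u$, $\nabla|x|^{-2(p-1)}=-2(p-1)x|x|^{-2p}$ and $\mathrm{div}(x|x|^{-2p})=(N-2p)|x|^{-2p}$, obtaining
\[
I=(p-1)\!\int |u|^{p-2}|\nabla u|^2|x|^{-2(p-1)}dx+\frac{2(p-1)(N-2p)}{p}\!\int\frac{|u|^p}{|x|^{2p}}dx,
\]
where both terms are non-negative since $N>2p$. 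This identity is paired with the H\"older upper bound $I\leq \|\Delta u\|_{p}\,\bigl\|u/|x|^2\bigr\|_p^{p-1}$.

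Discarding the gradient term would only give the crude constant $p/[2(p-1)(N-2p)]$. To reach the sharp $p^2/[N(p-1)(N-2p)]$, I would apply the classical weighted $L^2$ Hardy inequality
\[
\int\frac{v^2}{|x|^{2p}}\,dx\leq\Bigl(\frac{2}{N-2p}\Bigr)^2\!\int|\nabla v|^2|x|^{-2(p-1)}\,dx
\]
(which follows in one line from $\mathrm{div}(x|x|^{-2p})=(N-2p)|x|^{-2p}$, integration by parts, and Cauchy--Schwarz) to the auxiliary function $v:=|u|^{p/2}$, which is compactly supported away from the origin. Since $|\nabla v|^2=(p^2/4)|u|^{p-2}|\nabla u|^2$, this is equivalent to a lower bound on the gradient term:
\[
\int|u|^{p-2}|\nabla u|^2|x|^{-2(p-1)}\,dx\geq \frac{(N-2p)^2}{p^2}\Bigl\|\tfrac{u}{|x|^2}\Bigr\|_p^p.
\]
Substituting into the identity for $I$ telescopes the two contributions to
\[
I\geq \frac{(p-1)(N-2p)}{p^2}\bigl[(N-2p)+2p\bigr]\Bigl\|\tfrac{u}{|x|^2}\Bigr\|_p^p=\frac{N(p-1)(N-2p)}{p^2}\Bigl\|\tfrac{u}{|x|^2}\Bigr\|_p^p,
\]
and dividing by the common factor $\|u/|x|^2\|_p^{p-1}$ in the H\"older upper bound delivers exactly the stated constant.

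The main obstacle I foresee is this final factor $2p/N$ improvement: a straightforward integration by parts, however carefully performed, only reaches $p/[2(p-1)(N-2p)]$, and pinning down the sharp constant requires recognizing that the gradient term can itself be controlled from below by the target quantity, via a weighted Hardy estimate applied to $|u|^{p/2}$. The density step and the H\"older estimate are routine.
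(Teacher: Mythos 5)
Your argument is correct, and it supplies what the paper deliberately omits: Theorem \ref{lem:Rellich} is quoted there with attribution to Okazawa \cite{Okazawa1996} and Davies--Hinz \cite{DaviesHinz1998}, and no proof is given in the text. What you have reconstructed is essentially the Davies--Hinz argument: pair $-\Delta u$ with $|u|^{p-2}u\,|x|^{-2(p-1)}$, integrate by parts to produce a nonnegative weighted gradient term plus $\tfrac{2(p-1)(N-2p)}{p}\||x|^{-2}u\|_p^p$, and then, instead of discarding the gradient term (which would only give the crude constant you indicate), bound it from below by $\tfrac{(N-2p)^2}{p^2}\||x|^{-2}u\|_p^p$ via the weighted $L^2$ Hardy inequality applied to $v=|u|^{p/2}$, so that $(N-2p)+2p=N$ telescopes to the sharp factor. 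The computations all check: $\operatorname{div}(x|x|^{-2p})=(N-2p)|x|^{-2p}$, the H\"older pairing with $(p-1)p'=p$ and $2(p-1)p'=2p$, and the one-line proof of the weighted Hardy inequality. The reduction to $u\in C_c^\infty(\R^N\setminus\{0\})$ via Lemma \ref{decomposition}(iii) is legitimate, with the small rewording that the left-hand side should be passed to the limit by Fatou along an a.e.\ convergent subsequence rather than by ``continuity'', which is not available a priori for a general $W^{2,p}$ function.

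Two technical points deserve a line each, neither of which affects the structure of the proof. First, for $1<p<2$ the map $t\mapsto|t|^{p-2}t$ is not $C^1$ at $t=0$, so the identity for $I$ must carry $\chi_{\{u\neq0\}}$ in the gradient term and be justified by regularizing with $(u^2+\epsilon^2)^{(p-2)/2}u$; this is exactly the issue the paper itself faces in Section 7, where it quotes \cite{MetaSpina}, and the smooth weight $|x|^{-2(p-1)}$, bounded on the support of $u$, causes no new difficulty. The auxiliary function $v=|u|^{p/2}$ is handled the same way, through $v_\epsilon=(u^2+\epsilon^2)^{p/4}-\epsilon^{p/2}$, for which $|\nabla v_\epsilon|^2\le\tfrac{p^2}{4}|u|^{p-2}|\nabla u|^2\chi_{\{u\neq0\}}$, followed by Fatou. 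Second, your real/imaginary splitting proves the complex case only with the constant doubled; to keep the sharp constant, pair $-\Delta u$ with $|u|^{p-2}\bar u\,|x|^{-2(p-1)}$, take real parts, and use $\Re\bigl(\nabla u\cdot\nabla(|u|^{p-2}\bar u)\bigr)\ge(p-1)|u|^{p-2}\bigl|\nabla|u|\bigr|^2$ together with $\nabla(|u|^p)=p\,|u|^{p-1}\nabla|u|$, after which the Hardy step runs on $|u|$ unchanged.
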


The condition $p< \frac N 2$ is necessary for the local integrability   of $|x|^{-2p}$. As in the case of Hardy inequality, one can investigate when Rellich inequality holds for functions with compact support in $\R^N \setminus \{0\}$ and having special symmetries.

\begin{proposition} \label{Rellichspecial}
Rellich inequalities
\[
\left\|\frac{u}{|x|^2}\right\|_{L^p}
\leq 
C_p
\left\|\Delta u\right\|_{L^p}
\]
hold in $C^\infty_c(\R^N \setminus \{0\})$ for $p \neq 1, \frac N 2, N$. 

When $p=1, \frac N 2$ they fail on radial functions but hold on $C^\infty_c(\R^N \setminus \{0\})\cap L^p_{\geq 1}$ and if $p=N$ they fail on $L^p_{=1}$ but hold on $C^\infty_c(\R^N \setminus \{0\})\cap L^p_{\neq 1}$.
\end{proposition}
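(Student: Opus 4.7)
The plan is to reduce the Rellich inequality to a chain of three weighted estimates,
\[
\Bigl\|\frac{u}{|x|^2}\Bigr\|_p
\le C_1 \Bigl\|\frac{\nabla u}{|x|}\Bigr\|_p
\le C_2 \|D^2u\|_p
\le C_3 \|\Delta u\|_p.
\]
For the first I would use the identity $\Delta|x|^{2-2p}=(2-2p)(N-2p)|x|^{-2p}$ together with one integration by parts (valid on $C_c^\infty(\R^N\setminus\{0\})$) and H\"older's inequality; this step requires $p\neq 1, N/2$, since otherwise $|x|^{2-2p}$ becomes harmonic and the identity degenerates. The second is Hardy's inequality (Theorem~\ref{lem:Hardy}) applied coordinate-wise to $\nabla u$, which, as noted in the discussion following that theorem, is available for functions in $C_c^\infty(\R^N\setminus\{0\})$ precisely when $p\neq N$. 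The third is the Calder\'on--Zygmund estimate, valid for $1<p<\infty$. Intersecting the admissible ranges gives the main conclusion for $p\notin\{1,N/2,N\}$.

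For $p=1$ or $p=N/2$ restricted to $L^p_{\geq 1}$ the first step degenerates, and I would replace it by the spherical Poincar\'e argument already used in Proposition~\ref{prop:Hardy}: since $u(r,\cdot)$ has zero mean on $S^{N-1}$, one has $\int_{S^{N-1}}|u(r\omega)|^p\,d\sigma\leq C\int_{S^{N-1}}|\nabla_{S^{N-1}}u(r\omega)|^p\,d\sigma$, and after dividing by $r^{2p}$ and integrating in $r^{N-1}\,dr$ this yields $\|u/|x|^2\|_p\leq C\|\nabla u/|x|\|_p$. For $p=N/2$ with $N\geq 3$ the remaining two steps apply since $1<p<N$. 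The case $p=1$ (in particular the degenerate $N=2,\,p=N/2=1$) requires more care because Calder\'on--Zygmund fails; I would instead use Lemma~\ref{density} to reduce to a finite sum $u=\sum_{n\geq 1}f_n(r)P_n(\omega)$ and analyse each harmonic component. The substitution $f_n=r^ng_n$ transforms the radial operator into $-r^{n-1}[rg_n''+(2n+N-1)g_n']$, and two successive one-dimensional weighted Hardy estimates (first on $g_n$, then on $g_n'$) give the component inequality $\|u_n/|x|^2\|_1\leq[n(N+n-2)]^{-1}\|\Delta u_n\|_1$.

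For $p=N$ on $L^N_{\neq 1}$ it is the second step that fails, now on the degree-$1$ sector, so I would again reduce to 1D via $f=r^ng$: the admissibility condition $a+1\neq 0$ for the relevant one-dimensional weighted Hardy corresponds exactly to $n\neq 1$, which is the standing hypothesis. The failures on the excluded subspaces I would demonstrate by explicit scale-invariant test functions: $u(r)=\psi(\log r)r^{2-N}$ or $u(r)=\psi(\log r)$ for the radial failures at $p=1,N/2$, and $u(x)=x_1\psi(\log|x|)$ for $p=N$, with $\psi$ a bump whose width tends to infinity; in each case a direct computation, exploiting that $r^{2-N}$, $1$, and $x_1$ lie in the kernel of the radial operator associated with their sector, shows $\|u/|x|^2\|_p/\|\Delta u\|_p\to\infty$. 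The main difficulty, in the subspace cases at $p=1$ and $p=N$, will be to combine the component-wise 1D estimates into a single inequality on a finite sum of spherical harmonics: since $L^p$ affords no orthogonal decomposition for $p\neq 2$ and the triangle inequality overestimates $\|\sum_n\Delta u_n\|_p$ by $\sum_n\|\Delta u_n\|_p$, this final step will require a more refined argument, likely via the explicit Green's function of the radial operator $L_n$ on each spherical sector together with a uniform-in-$n$ kernel bound.
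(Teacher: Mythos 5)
The paper itself gives no internal proof of Proposition \ref{Rellichspecial} --- it simply cites \cite[Section 6]{MSS-Rellich-disconti} and \cite[Section 7]{MSS-Rellich-CZ} --- so your attempt must stand on its own, and it stands only partially. The generic case $p\notin\{1,\frac N2,N\}$ is correct and complete: the chain $\bigl\|u/|x|^2\bigr\|_p\le C_1\bigl\|\nabla u/|x|\bigr\|_p\le C_2\|D^2u\|_p\le C_3\|\Delta u\|_p$, with the first step from $\Delta|x|^{2-2p}=(2-2p)(N-2p)|x|^{-2p}$ plus integration by parts and absorption, the second from Hardy off the origin ($p\neq N$), and the third from Calder\'on--Zygmund, is a standard and valid route, and it makes this part of the statement self-contained where the paper outsources it. Your counterexamples are also sound: $\psi(\log r)$, $r^{2-N}\psi(\log r)$ and $x_1\psi(\log|x|)$ lie (modulo the bump) in the kernel of the radial operator of their sector, so $\Delta u$ sees only derivatives of $\psi$, and with bump width $R\to\infty$ the quotient $\|u/|x|^2\|_p/\|\Delta u\|_p$ blows up by the scaling count you indicate. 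Likewise the case $p=\frac N2$, $N\ge 3$, on the zero-mean sector is fully proved: spherical Poincar\'e (as in Proposition \ref{prop:Hardy}) replaces the degenerate first step, and the remaining two steps apply since $1<p<N$.

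The genuine gap is in the two remaining positive claims, $p=1$ on $C_c^\infty(\R^N\setminus\{0\})\cap L^1_{\geq 1}$ and $p=N$ on $C_c^\infty(\R^N\setminus\{0\})\cap L^N_{\neq 1}$, and you have correctly located but not closed it. Your one-dimensional reductions are right --- the substitution $f_n=r^ng_n$ does yield $r^{n-1}\bigl(rg_n''+(2n+N-1)g_n'\bigr)$, and at $p=N$ the weighted-Hardy admissibility constant $a+1=(n-1)N$ vanishes exactly at $n=1$ --- but the component estimates $\|u_n/|x|^2\|_p\le C_n\|\Delta u_n\|_p$ cannot be summed: for $p\neq 2$ the projections onto individual spherical harmonics have norms growing with the degree (for $p=1$ already $\|P_j\|_\infty$ enters through $T_j$ of Lemma \ref{projection}), so no decay of $C_n=\lambda_n^{-1}$ compensates, and the triangle inequality gives nothing against $\|\Delta u\|_p$. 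The ``uniform-in-$n$ Green's kernel bound'' you defer to is not a finishing touch; it is precisely the substantive content of the cited proofs, which handle the critical exponents by working on the whole sector space $L^p_J$ at once (weighted Calder\'on--Zygmund estimates in \cite{MSS-Rellich-CZ}, and the analysis of \cite[Section 6]{MSS-Rellich-disconti}) rather than harmonic-by-harmonic. Note also that at $p=1$ the difficulty is compounded because Calder\'on--Zygmund itself fails, so even a uniform kernel bound for the resolvents of the operators $L_n$ would still have to be assembled into an $L^1$ statement without passing through $\|D^2u\|_1$. As it stands, your proposal proves the proposition for all $p\notin\{1,N\}$ (with the $N=2$, $p=1$ case subsumed in $p=1$), plus all the negative assertions, but the critical sector-restricted inequalities at $p=1$ and $p=N$ remain a declared, unfilled gap.
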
 
We refer to \cite[Section 6]{MSS-Rellich-disconti} and to \cite[Section 7]{MSS-Rellich-CZ} for the proof.

\begin{corollary} \label{Rellichalways}
Rellich inequalities
\[
\left\|\frac{u}{|x|^2}\right\|_{L^p}
\leq 
C_p
\left\|\Delta u\right\|_{L^p}
\]
hold in $W^{2,p}_{\geq 2}$ for $1 \leq p<\infty$. 
\end{corollary}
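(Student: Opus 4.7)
The plan is to combine two ingredients already established in the paper: the density result of Proposition \ref{density1}, which shows that $C_c^\infty(\R^N\setminus\{0\})\cap L^p_{\geq 2}$ is dense in $W^{2,p}_{\geq 2}$, and the Rellich inequality on smooth test functions from Proposition \ref{Rellichspecial}. The only real task is to check that the exceptional values $p=1,\ N/2,\ N$ do not cause problems, since the zero-mean condition built into $L^p_{\geq 2}$ is enough to rule out all the obstructions identified in Proposition \ref{Rellichspecial}.

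Concretely, given $u\in W^{2,p}_{\geq 2}$, I would first use Proposition \ref{density1} to pick a sequence $u_n\in C_c^\infty(\R^N\setminus\{0\})\cap L^p_{\geq 2}$ with $u_n\to u$ in $W^{2,p}$. Since $L^p_{\geq 2}\subset L^p_{\geq 1}$ (covering $p=1$ and $p=N/2$) and $L^p_{\geq 2}\subset L^p_{\neq 1}$ (covering $p=N$), every $u_n$ lies in the class on which Proposition \ref{Rellichspecial} gives
\[
\left\|\frac{u_n}{|x|^2}\right\|_{L^p}\leq C_p\,\|\Delta u_n\|_{L^p},
\]
and this holds for \emph{all} $1\leq p<\infty$ with a single constant depending only on $p$ (and $N$).

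To close the argument I would pass to the limit. The right-hand side converges since $\Delta u_n\to\Delta u$ in $L^p$. For the left-hand side, after extracting a subsequence along which $u_n\to u$ a.e., Fatou's lemma gives
\[
\left\|\frac{u}{|x|^2}\right\|_{L^p}\leq \liminf_n\left\|\frac{u_n}{|x|^2}\right\|_{L^p}\leq C_p\,\|\Delta u\|_{L^p},
\]
which is the desired inequality. There is no real obstacle here: the whole point of Proposition \ref{Rellichspecial} was to handle the borderline $p$'s by excluding the offending low-order spherical harmonics, and Proposition \ref{density1} was designed precisely so that the smooth approximants inherit the same angular restriction, which is why no genuine difficulty survives beyond the bookkeeping of which degrees to exclude.
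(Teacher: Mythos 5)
Your proposal is correct and follows essentially the same route as the paper's own proof: apply Proposition \ref{Rellichspecial} on $C_c^\infty(\R^N\setminus\{0\})\cap L^p_{\geq 2}$ and conclude by the density of this set in $W^{2,p}_{\geq 2}$ from Proposition \ref{density1}. Your extra details --- noting $L^p_{\geq 2}\subset L^p_{\geq 1}$ and $L^p_{\geq 2}\subset L^p_{\neq 1}$ to cover the exceptional values $p=1,\ N/2,\ N$, and using Fatou's lemma to pass to the limit --- are exactly the bookkeeping the paper leaves implicit.
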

\begin{proof}
In fact they hold in $C^\infty_c(\R^N \setminus \{0\})\cap L^p_{\geq 2}$, by the previous proposition, and this set  is dense in $W^{2,p}_{\geq 2}$,  by Proposition \ref{density1}. 
\end{proof}

Next we consider Rellich inequalities with a power $|x|^{-\alpha}$. As for Hardy inequality one sees that the additive inequality 
$\left \|\frac{u}{|x|^\alpha}\right \|_p \leq C(\|u\|_p+\|\Delta u\|_p)$  implies the  multiplicative version
$$\left \|\frac{u}{|x|^\alpha}\right \|_p \leq C\|u\|_p^{1-\frac \alpha 2}\|\Delta u\|^{\frac \alpha 2}_p$$
 on any subspace of $W^{2,p}$ invariant under dilations. The above corollary shows that this is the case on $W^{2,p}_{\geq 2}$ for any $0<\alpha \leq 2$. Since the case $\alpha=2$ has been already considered above, we treat only $0<\alpha<2$.

\begin{proposition} \label{Rellich}
Let $1<p<\infty$,  $0<\alpha <2$. Then Rellich inequalities
\[
\left\|\frac{u}{|x|^\alpha}\right\|_{L^p}
\leq 
C\|u\|_{L^p}^{1-\frac \alpha 2}\|\Delta u\|_{L^p}^{\frac \alpha 2}.
\]
hold in $W^{2,p}_{\geq n}$, $n=0,1$,  if and only if $\alpha < \frac{N}{p}+n$. 
\end{proposition}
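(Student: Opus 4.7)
\textbf{Necessity} follows from explicit test functions. For $n=0$ and $\alpha\geq N/p$, any $u\in C_c^\infty(\R^N)$ with $u(0)\neq 0$ yields $|x|^{-\alpha}u\notin L^p$ near the origin, while the right-hand side is finite. For $n=1$ and $\alpha\geq N/p+1$, take $u(x)=x_1\eta(x)$ with $\eta\in C_c^\infty(\R^N)$, $\eta(0)=1$; then $u\in W^{2,p}_{=1}\subset W^{2,p}_{\geq 1}$ but $|x|^{-\alpha}u\sim |x|^{1-\alpha}$ near $0$, which fails $L^p$-integrability exactly when $(\alpha-1)p\geq N$.

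\textbf{Sufficiency, case $n=0$.} A dilation argument (identical to the one used in Lemma \ref{scaling}) reduces the multiplicative form to the additive inequality $\||x|^{-\alpha}u\|_p\leq C(\|u\|_p+\|\Delta u\|_p)$ on the dilation-invariant space $W^{2,p}_{\geq n}$. For $n=0$, $\alpha<N/p$, I would split over $B$ and $B^c$. On $B^c$ one uses $|x|^{-\alpha}\leq 1$; on $B$, apply H\"older $\||x|^{-\alpha}u\|_{L^p(B)}\leq \||x|^{-\alpha}\|_{L^r(B)}\|u\|_{L^q(B)}$ with $1/p=1/r+1/q$ and $q$ coming from the relevant Sobolev embedding ($q=Np/(N-2p)$ when $2p<N$, giving $r=N/2$ and $\alpha r<N$ since $\alpha<2$; any $q<\infty$ when $2p=N$; $q=\infty$ when $2p>N$), finished by Calder\'on--Zygmund.

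\textbf{Case $n=1$.} Use the bounded projection of Lemma \ref{ItensorP} with $J=\{1,\dots,N\}$ (first-order spherical harmonics) to decompose $u=u_{=1}+u_{\geq 2}$ in $W^{2,p}_{\geq 1}$. On $u_{\geq 2}$, Corollary \ref{Rellichalways} supplies $\|u_{\geq 2}/|x|^2\|_p\leq C\|\Delta u_{\geq 2}\|_p$, so the H\"older interpolation $\||x|^{-\alpha}v\|_p\leq \|v/|x|^2\|_p^{\alpha/2}\|v\|_p^{1-\alpha/2}$ yields the multiplicative bound for all $\alpha\in(0,2)$. For a typical summand $u_i=g_i(|x|)\,x_i$ of $u_{=1}$, the direct computation
$$\Delta u_i=\Bigl(g_i''+\tfrac{N+1}{r}g_i'\Bigr)x_i=(\Delta_{N+2}g_i)(|x|)\,x_i$$
(with $\Delta_{N+2}$ the radial Laplacian of $\R^{N+2}$) turns the three relevant norms of $u_i$ into weighted radial integrals of the form $\int_0^\infty|\cdot|^p r^{p+N-1}\,dr$, i.e.\ $L^p$ norms on $\R^{N+2}$ against the weight $|y|^{p-2}\,dy$. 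The effective dimension of this measure is $(N+2)+(p-2)=N+p$, so the corresponding weighted Rellich inequality is expected to be valid exactly when $\alpha<(N+p)/p=N/p+1$, matching the hypothesis.

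\textbf{Main obstacle.} The crux of the proof is this weighted Rellich estimate on $\R^{N+2}$. I would establish it by iterated Hardy: Proposition \ref{prop:Hardy} combined with the interpolation $\|\nabla u_i\|_p\leq C\|u_i\|_p^{1/2}\|\Delta u_i\|_p^{1/2}$ (Gagliardo--Nirenberg plus Calder\'on--Zygmund) handles $\alpha\in(0,1]$; for $\alpha\in(1,N/p+1)$ I would apply Proposition \ref{scaling} with exponent $\beta=\alpha-1<N/p$ to $v=u_i/|x|=g_i(|x|)\omega_i\in L^p_{=1}$, after producing a bound on $\|\nabla v\|_p$ that exploits the explicit one-dimensional structure of $g_i$ so as to break the apparent circularity $\|\nabla v\|_p\lesssim \|\nabla u_i/|x|\|_p+\|u_i/|x|^2\|_p$.
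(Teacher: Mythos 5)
Your necessity argument and your $n=0$ case are correct and essentially the paper's own (for $p<N/2$ you use Sobolev embedding plus H\"older where the paper invokes the Davies--Hinz inequality of Theorem \ref{lem:Rellich}; both work, though for the $n=1$ counterexample you should take $\eta$ radial so that $x_1\eta$ actually lies in $L^p_{=1}$). The genuine gap sits exactly where the proposition says something new: the case $n=1$ in the range $N/p\le \alpha<N/p+1$. After the (legitimate) splitting $u=u_{=1}+u_{\ge 2}$ via Lemma \ref{ItensorP}, with $u_{\ge 2}$ handled by Corollary \ref{Rellichalways}, everything rests on the degree-one component for $1<\alpha<N/p+1$, and there your argument stops: as you concede in your ``Main obstacle'' paragraph, the bound on $\|\nabla v\|_p$ with $v=u_i/|x|$ passes through $\|u_i/|x|^2\|_p$, and no device for breaking this circularity is supplied. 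Worse, that intermediate quantity is not merely as strong as the conclusion --- it is false in part of your range: by Proposition \ref{Rellichspecial}, the $\alpha=2$ Rellich inequality fails on $L^p_{=1}$ precisely when $p=N$, a value squarely inside the hypotheses (there $N/p+1=2$, so all $\alpha<2$ must be covered). So the route through $v=u_i/|x|$ cannot be completed as sketched. Your dimension-shift heuristic (effective dimension $N+p$ for the measure $r^{N+p-1}\,dr$) does predict the correct threshold $\alpha<N/p+1$, but the corresponding one-dimensional weighted Hardy--Rellich inequality for the profile $g_i$ is only announced as ``expected'', never stated or proved; it is the entire content of the case.

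The paper closes this case with a short observation you missed, which makes the modal decomposition unnecessary: if $N/p\le\alpha<N/p+1$ and $\alpha<2$, then automatically $p>N/2$, so Morrey's embedding applies to $u\in W^{2,p}_{\ge 1}$ directly. Since $u(r\,\cdot)$ has zero mean over every sphere, continuity forces $u(0)=0$, whence $|u(x)|\le C\|u\|_{W^{2,p}}|x|^{\gamma}$ on $B$ with $\gamma=2-\frac{N}{p}$ for $N/2<p<N$ (any $\gamma<1$ for $p=N$, via a large Sobolev exponent; $\gamma=1$ for $p>N$, using $u\in C^1$ and again $u(0)=0$). Then
\[
\left\||x|^{-\alpha}u\right\|_{L^p(B)}\le C\|u\|_{W^{2,p}}\left\||x|^{\gamma-\alpha}\right\|_{L^p(B)}<\infty,
\]
because $p(\alpha-\gamma)=p(\alpha-2)+N<N$, and the additive inequality, hence by scaling the multiplicative one, follows. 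The structural point is that zero spherical mean is used only through the single pointwise fact $u(0)=0$, not through Hardy-type inequalities on $L^p_{\ge 1}$. If you wish to salvage your own route, the missing ingredient is an honest proof of the weighted one-dimensional inequality controlling $\int_0^\infty |g|^p\,r^{N+p-1-\alpha p}\,dr$ by the corresponding norms of $g$ and $g''+\frac{N+1}{r}g'$; that is doable by one-dimensional integrations by parts, but your proposal contains none of that work.
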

\begin{proof}
The necessity of the conditions follow since $W^{2,p}_{\geq n}$ contains functions which behave like $|x|^n$ near the origin, which forces $|x|^{(n-\alpha)p}$ to be locally integrable.

As explained above, it is sufficient to prove the additive inequality $\||x|^{-\alpha}u\|_p \leq C(\|u\|_p+\|\Delta u\|_p)$, $u \in W^{2,p}_{\geq n}$. Also, since $1<p<\infty$, we can reduce the proof to showing that  $\||x|^{-\alpha}u\|_p \leq C\|u\|_{W^{2,p}}$, by using Calder\'on-Zygmund inequality.

{\bf Case 1.} $n=0, 2 < \frac N p.$ This case is already covered by Lemma \ref{Rellich} which holds with $\alpha=2$, splitting the integrals over and outside the unit ball, as in the proof of Hardy's inequalities.

{\bf Case 2.} $n=0, \alpha < \frac N p \leq 2.$ Assume first that $\frac N p < 2$ so that, by Morrey embedding, $\|u\|_\infty \leq C\|u\|_{W^{2,p}}$. Then 
$$
\||x|^{-\alpha} u\|_p \leq \||x|^{-\alpha} u\|_{L^p(B)}+\||x|^{-\alpha} u\|_{L^p(B^c)} \leq \|u\|_\infty \||x|^{-\alpha} \|_{L^p(B)}+\|u\|_p \leq C\|u\|_{W^{2,p}}.
$$
If $\frac N p=2$ one has to use a large exponent $q$ instead of $\infty$ and use H\"older inequality in $B$.

The case $n=0$ is concluded and we consider $n=1$. We may assume that $\frac N p \leq \alpha < \frac N p+1$, otherwise we are again in cases 1 or 2.

{\bf Case 3.} $n=1, \frac N p  \leq \alpha < \frac N p+1 $. Note that $\frac N p \leq \alpha <2$, so that $p> \frac N 2$. If $p < N$, then Morrey embedding gives that $u$ is H\"older continuous of exponent $\gamma= 2-\frac Np$. However, since $u(r\dot)$ has zero mean for every $r>0$, then $u(0)=0$ and $|u(x)| \leq C\|u\|_{W^{2,p}}|x|^\gamma$ for $|x| \leq 1$. Proceeding as in case 2
$$
\||x|^{-\alpha} u\|_p \leq \||x|^{-\alpha} u\|_{L^p(B)}+\||x|^{-\alpha} u\|_{L^p(B^c)} \leq C\|u\|_{W^{2,p}} \||x|^{\gamma-\alpha} \|_{L^p(B)}+\|u\|_p \leq C\|u\|_{W^{2,p}}
$$
since $p(\alpha-\gamma)=p(\alpha-2)+N<N$. 

When $p=N$,   $u$ is H\"older continuous of any exponent less than 1 and we repeat the proof above. 

Finally, if $p>N$,  $u \in C^1$ and we use the estimate $|u(x)| \leq C\|u\|_{W^{2,p}}|x|$ for $|x| \leq 1$.
\end{proof} 

\begin{corollary} \label{Rellich1}
If $0< \alpha <2$ and $\frac N p \leq \alpha < \frac N p+1$, then Rellich inequalities as in Proposition \ref{Rellich} hold in $\{u \in W^{2,p}: u(0)=0\}$.
\end{corollary}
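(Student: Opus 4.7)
The plan is to observe that Case 3 in the proof of Proposition \ref{Rellich} used only the property $u(0)=0$, and not the stronger orthogonality to constants characterizing $W^{2,p}_{\geq 1}$. So the corollary will follow essentially by rerunning that case verbatim, with $u(0)=0$ now taken as the hypothesis instead of a consequence of the zero-mean condition.

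In detail, I would first note that $N/p\le\alpha<2$ forces $p>N/2$, so every $u\in W^{2,p}$ is continuous at the origin by Morrey (or critical Sobolev) embedding, and in fact Hölder continuous of exponent $\gamma$, where $\gamma=2-N/p$ when $N/2<p<N$, any $\gamma<1$ when $p=N$, and $\gamma=1$ (via the $C^1$ embedding) when $p>N$. The assumption $u(0)=0$ then yields the pointwise bound
$$|u(x)|\le C\|u\|_{W^{2,p}}|x|^\gamma,\qquad |x|\le 1.$$

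Next I would split $\||x|^{-\alpha}u\|_p^p$ into integrals over the unit ball $B$ and its complement. The piece over $B^c$ is at most $\|u\|_p^p$. On $B$, the pointwise bound reduces the estimate to the finiteness of $\int_B |x|^{(\gamma-\alpha)p}\,dx$, which holds since $(\alpha-\gamma)p<N$: when $p\le N$ this follows directly from $\alpha<2$ (since $(\alpha-\gamma)p=(\alpha-2)p+N<N$), and when $p>N$ from $\alpha<N/p+1$ (which gives $(\alpha-1)p<N$). Combining the two pieces and invoking Calderón--Zygmund to replace $\|u\|_{W^{2,p}}$ by $\|u\|_p+\|\Delta u\|_p$ produces the additive Rellich inequality
$$\left\|\frac{u}{|x|^\alpha}\right\|_p\le C\bigl(\|u\|_p+\|\Delta u\|_p\bigr).$$

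Finally, since $\{u\in W^{2,p}:u(0)=0\}$ is invariant under the dilations $u_\lambda(x):=u(\lambda x)$, the scaling argument recalled just before Proposition \ref{Rellich} upgrades this additive bound to the multiplicative form $\||x|^{-\alpha}u\|_p\le C\|u\|_p^{1-\alpha/2}\|\Delta u\|_p^{\alpha/2}$. There is no genuine obstacle; the only point requiring minor bookkeeping is the case split $N/2<p<N$, $p=N$, $p>N$ when verifying that the exponent $(\gamma-\alpha)p$ stays strictly above $-N$, and this is what forces the restriction $\alpha<N/p+1$ in the regime $p>N$.
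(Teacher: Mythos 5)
Your proof is correct and follows the paper's own route: the paper's proof of this corollary is precisely the observation that Case 3 of Proposition \ref{Rellich} only used $u(0)=0$ (obtained there from the zero-mean condition), so the argument reruns verbatim with $u(0)=0$ as a hypothesis. Your case split at $p=N$ and $p>N$, the Calder\'on--Zygmund reduction, and the dilation-invariance upgrade to the multiplicative form all match the paper's argument, which simply states ``Exactly as in Case 3 of the above proposition (note that $p>\frac N2$)''.
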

\begin{proof}
Exactly as in Case 3 of the above proposition (note that $p> \frac N 2$)
\end{proof}
Arguing similarly (note that $p>N$ below) one obtains
\begin{corollary} \label{Rellich2}
If $0< \alpha <2$ and $\frac N p +1 \leq \alpha < 2$, then Rellich inequalities as in Proposition \ref{Rellich} hold in $\{u \in W^{2,p}: u(0)=\nabla u(0)=0\}$.
\end{corollary}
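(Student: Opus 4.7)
The hypothesis $\frac{N}{p}+1\le \alpha<2$ forces $\frac{N}{p}<1$, i.e. $p>N$, so I plan to exploit the strong Morrey-type embedding available in this range. Precisely, $W^{2,p}\hookrightarrow C^{1,\gamma}$ with $\gamma=1-\frac{N}{p}$, and one has the quantitative estimate $|\nabla u(x)-\nabla u(y)|\le C\|u\|_{W^{2,p}}|x-y|^{\gamma}$. Combined with the vanishing conditions $u(0)=0$ and $\nabla u(0)=0$, integrating $\nabla u$ along the segment from $0$ to $x$ yields the key pointwise bound
\[
|u(x)|\ \le\ C\,\|u\|_{W^{2,p}}\,|x|^{2-\frac{N}{p}}\quad\text{for}\ |x|\le 1.
\]

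As in the proof of Proposition~\ref{Rellich}, I would first reduce matters to the additive inequality $\||x|^{-\alpha}u\|_p\le C\|u\|_{W^{2,p}}$. Splitting the integral, on $B^c$ one has $\||x|^{-\alpha}u\|_{L^p(B^c)}\le\|u\|_p$, while on $B$ the pointwise bound above gives
\[
\||x|^{-\alpha}u\|_{L^p(B)}\ \le\ C\|u\|_{W^{2,p}}\,\bigl\||x|^{2-\frac{N}{p}-\alpha}\bigr\|_{L^p(B)}.
\]
The $L^p$ integrability on $B$ reduces to $p\bigl(2-\tfrac{N}{p}-\alpha\bigr)+N>0$, which simplifies to $(2-\alpha)p>0$. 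This holds strictly since $\alpha<2$, so there is no endpoint loss even when $\alpha=\frac{N}{p}+1$.

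Once the additive inequality $\||x|^{-\alpha}u\|_p\le C\|u\|_{W^{2,p}}$ is established, I pass to the multiplicative form $\||x|^{-\alpha}u\|_p\le C\|u\|_p^{1-\alpha/2}\|\Delta u\|_p^{\alpha/2}$ in two moves: replace $\|u\|_{W^{2,p}}$ by $\|u\|_p+\|\Delta u\|_p$ via the Calder\'on--Zygmund inequality (legitimate as $1<p<\infty$; actually here $p>N$), and then apply the dilation argument recalled before Proposition~\ref{Rellich} to the family $u_\lambda(x)=u(\lambda x)$ and minimize in $\lambda>0$. The point is that the subspace $\{u\in W^{2,p}:u(0)=\nabla u(0)=0\}$ is invariant under these dilations, so the scaling trick is legal on it.

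The only real subtlety, hence the step I would double-check, is the endpoint $\alpha=\frac{N}{p}+1$: one has to verify that the pointwise exponent $2-\frac{N}{p}$ is truly attained by $C^{1,1-N/p}$ Taylor expansion at $0$ with vanishing $0$- and $1$-jets, and that $|x|^{(2-\frac{N}{p}-\alpha)p}$ remains integrable in a neighborhood of $0$. As shown above, the integrability condition collapses to the strict inequality $\alpha<2$, so the bound holds uniformly up to and including the lower endpoint $\alpha=\frac{N}{p}+1$, and the corollary follows.
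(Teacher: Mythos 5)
Your proposal is correct and follows essentially the same route as the paper, which proves the corollary by ``arguing similarly'' to Case 3 of Proposition~\ref{Rellich} with $p>N$: the Morrey embedding $W^{2,p}\hookrightarrow C^{1,1-N/p}$ together with $u(0)=\nabla u(0)=0$ gives $|u(x)|\le C\|u\|_{W^{2,p}}|x|^{2-N/p}$, the additive bound on $B$ then reduces to $(2-\alpha)p>0$, and the multiplicative form follows from Calder\'on--Zygmund plus the dilation argument on the (dilation-invariant) subspace. Your explicit verification of the endpoint $\alpha=\frac{N}{p}+1$ is exactly the point the paper leaves implicit, and it checks out.
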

\section{The operator in $L^2$}
The easiest way to define the operator  is through a form in $L^2$.
For $0<\alpha <2$, $N \geq 2$,  we introduce the symmetric form on $H^1=W^{1,2}$ 
$$
a(u,v)=\int_{\R^N}\left ( \nabla u\cdot \nabla \bar{v} +c \frac{u \bar v}{|x|^\alpha}\right )\, dx.
$$
By Proposition \ref{scaling} with $p=2$ the form $a$ is well-defined, continuous on $H^1$ and bounded from below (use $\||x|^{-\frac{\alpha}{2}} u\|_2 \leq \varepsilon \|\nabla u\|_2+C_\varepsilon \|u\|_2$). We can therefore define a selfadjoint operator $S$ in $ L^2$, bounded from below,  by
\begin{eqnarray} \label{Sform}
D(S)=\{ u \in H^1: \exists  f \in L^2\ {\rm such\ that}\  a(u,v)=\int_{\R^N} f\bar v\, dx\ \forall  v \in H^1\}, \quad  Su=f.
\end{eqnarray}
The generated semigroup $\{e^{-z S}\}$ is analytic for ${\rm Re}\,  z >0$ in $L^2$ and  positive for $t \geq 0$, since $a(u^+, u^-)=0$. 

We refer to \cite[Chapters 1,2]{Ouhabaz} for the basic properties of operators and semigroups associated to forms.

The semigroup is not $L^\infty$-contractive unless $c \geq 0$ but we could use Gaussian estimates to extend it to $L^p$. 

However, we follow another strategy in the next sections, which gives the domain. We define the operator directly in $L^p$ when $0<\alpha < \frac Np+1$, the easiest case being $\alpha < \frac N p$ , and show that it generates an analytic semigroup. When $p=2$ we prove that this operator coincides with $S$ defined in \eqref{Sform} and that all these semigroups are consistent for different values of (admissible) $p$. In particular, we characterize the domain in $L^2$ of the operator $S$.
Finally, since the semigroup is selfadjoint, by duality it is also a semigroup when $\frac Np+1 \leq \alpha <2$ and we characterize its domain  using elliptic regularity.

\section{The operator in  $L^p$}
\subsection{The case $\alpha < \frac N p+1$}
Let us start with the simplest result.

\begin{proposition} \label{generation1}
If $0 < \alpha < 2$, $1<p<\infty$, $ \alpha<\frac N p$, then $-S$ with domain $W^{2,p}$ generates an  analytic semigroup of angle $\pi/2$ in $L^p$. 
\end{proposition}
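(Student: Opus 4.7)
The plan is to realize $-S$ as a perturbation of $-\Delta$ by the multiplication operator $V = c|x|^{-\alpha}$, and to show that this perturbation is so small in a relative sense that the angle of analyticity is preserved. Concretely, since $\alpha < N/p$, Proposition \ref{Rellich} (with $n=0$) gives the multiplicative Rellich inequality
\[
\|Vu\|_p \leq C\, \|u\|_p^{1-\alpha/2}\,\|\Delta u\|_p^{\alpha/2}, \qquad u \in W^{2,p}.
\]
Since $1 - \alpha/2 > 0$, this is strictly stronger than a relative bound zero estimate for $V$ with respect to $-\Delta$, and closedness of $-S$ on $W^{2,p}$ together with density of $W^{2,p}$ in $L^p$ follows immediately from the closedness of $-\Delta$ on $W^{2,p}$.

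The main step is a resolvent estimate in a sector. It is classical that $-\Delta$ with domain $W^{2,p}$ generates a bounded analytic semigroup of angle $\pi/2$ on $L^p$, so for every $\theta \in (0,\pi/2)$ there is $M_\theta$ with $\|(\lambda-\Delta)^{-1}\|_{L^p\to L^p} \leq M_\theta/|\lambda|$ for $\lambda \in \Sigma_{\pi/2+\theta}\setminus\{0\}$. For such $\lambda$ and $f \in L^p$, setting $u = (\lambda-\Delta)^{-1}f$ one has $\|u\|_p \leq M_\theta|\lambda|^{-1}\|f\|_p$ and, using $\Delta u = \lambda u - f$, $\|\Delta u\|_p \leq (M_\theta+1)\|f\|_p$. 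Plugging into the Rellich inequality,
\[
\|V(\lambda-\Delta)^{-1}f\|_p \leq C\,(M_\theta/|\lambda|)^{1-\alpha/2}\,(M_\theta+1)^{\alpha/2}\,\|f\|_p = C_\theta\,|\lambda|^{-(1-\alpha/2)}\,\|f\|_p,
\]
and the right-hand side tends to $0$ as $|\lambda| \to \infty$ in the sector.

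Consequently, there exists $R_\theta>0$ such that $\|V(\lambda-\Delta)^{-1}\| \leq 1/2$ whenever $\lambda \in \Sigma_{\pi/2+\theta}$ with $|\lambda| \geq R_\theta$. For such $\lambda$, the factorization
\[
\lambda + S = \bigl(I + V(\lambda-\Delta)^{-1}\bigr)(\lambda-\Delta)
\]
on $W^{2,p}$ shows that $\lambda + S : W^{2,p} \to L^p$ is invertible with
\[
(\lambda+S)^{-1} = (\lambda-\Delta)^{-1}\bigl(I + V(\lambda-\Delta)^{-1}\bigr)^{-1}, \qquad \|(\lambda+S)^{-1}\| \leq \frac{2M_\theta}{|\lambda|}.
\]
Since $\theta \in (0,\pi/2)$ is arbitrary, by the standard sectorial characterization of generators (applied after a trivial shift by some $\omega>0$ to cover the bounded part of each sector) we conclude that $-S$ with domain $W^{2,p}$ generates an analytic semigroup of angle $\pi/2$ in $L^p$.

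There is no real obstacle: the only delicate point is to ensure that the perturbation norm $\|V(\lambda-\Delta)^{-1}\|$ decays as $|\lambda|\to\infty$ uniformly in the sector, and this is precisely what the multiplicative (rather than the additive) Rellich inequality of Proposition \ref{Rellich} provides, thanks to the strict inequality $\alpha/2 < 1$.
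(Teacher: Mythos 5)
Your proof is correct and takes essentially the same route as the paper: both hinge on the multiplicative Rellich inequality of Proposition \ref{Rellich} with $n=0$ (valid precisely because $\alpha<\frac{N}{p}$), which makes the potential $c|x|^{-\alpha}$ a relative bound zero perturbation of $-\Delta$ on $W^{2,p}$. The only difference is that where the paper converts the multiplicative bound into $\||x|^{-\alpha}u\|_p\leq \varepsilon\|\Delta u\|_p+C_\varepsilon\|u\|_p$ and then cites standard perturbation theory for analytic semigroups, you unfold that black box, proving the sectorial resolvent estimate and the Neumann-series factorization $\lambda+S=\bigl(I+V(\lambda-\Delta)^{-1}\bigr)(\lambda-\Delta)$ explicitly.
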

\begin{proof}
Proposition \ref{Rellich} for $n=0$ gives 
$\|x|^{-\alpha}|u\|_p \leq C\|u\|_p^{1- \frac \alpha 2}\|\Delta u\|_p^{\frac \alpha 2}$ 
for $u\in W^{2,p}$, and then, 
for every $\varepsilon >0$ there exists a positive constant $C_\varepsilon$ such that  
$\|x|^{-\alpha}|u\|_p \leq  \varepsilon \|\Delta u\|_p+C_\varepsilon\| u\|_p$. The result follows from standard perturbation theory for analytic semigroups.
\end{proof}

\medskip

The following lemma is crucial to treat other cases.
\begin{lemma}\label{lem:psi_alpha}
If $0<\alpha <2$ and, $N \geq 2$ ,
then the function 
\[
\psi_{\alpha,c,m}(x)=
\sum_{k=0}^m
\frac{c^k\Gamma(\frac{N-\alpha}{2-\alpha})}{(2-\alpha)^{2k} k!\Gamma(\frac{N-\alpha}{2-\alpha}+k)}
|x|^{(2-\alpha)k}
\]
satisfies
\[
\left(-\Delta+\frac{c}{|x|^\alpha}\right)\psi_{\alpha,c,m}(x)=
\frac{c^{m+1}\Gamma(\frac{N-\alpha}{2-\alpha})}{(2-\alpha)^{2m}m!\Gamma(\frac{N-\alpha}{2-\alpha}+m)}
|x|^{(2-\alpha)m-\alpha}.
\]
In particular, if $m\geq \frac{\alpha}{2-\alpha}$, then $\left(-\Delta+\frac{c}{|x|^\alpha}\right)\psi_{\alpha,m}\in L^\infty(B)$. 
\end{lemma}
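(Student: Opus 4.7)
The proof is a direct calculation exploiting the fact that the coefficients in $\psi_{\alpha,c,m}$ are specifically engineered to produce a telescoping cancellation. Since $\psi_{\alpha,c,m}$ is radial, I would work term by term via the formula
\[
\Delta |x|^\beta = \beta(\beta+N-2)|x|^{\beta-2},
\]
and apply it with $\beta = (2-\alpha)k$. Setting $\gamma = \frac{N-\alpha}{2-\alpha}$ one has $N-2 = (2-\alpha)\gamma - (2-\alpha) + \alpha - \alpha$, leading to the clean rewriting
\[
(2-\alpha)k + N - 2 = (2-\alpha)(k+\gamma-1),
\]
so that
\[
\Delta |x|^{(2-\alpha)k} = (2-\alpha)^2 k(k+\gamma-1)\,|x|^{(2-\alpha)k - 2}, \qquad k\geq 1,
\]
with the $k=0$ term vanishing. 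Note also that the exponent $(2-\alpha)k-2$ can be written as $(2-\alpha)(k-1) - \alpha$, which is the same exponent that the multiplication by $|x|^{-\alpha}$ produces on the $(k-1)$-th term of $\psi_{\alpha,c,m}$.

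Denoting
\[
a_k=\frac{c^k\Gamma(\gamma)}{(2-\alpha)^{2k} k!\,\Gamma(\gamma+k)},
\]
I would then shift the index $j=k-1$ in the computation of $-\Delta\psi_{\alpha,c,m}$ and verify the crucial identity
\[
a_{j+1}(2-\alpha)^2(j+1)(j+\gamma)=c\, a_j,
\]
which follows at once from $\Gamma(\gamma+j+1)=(\gamma+j)\Gamma(\gamma+j)$ and $(j+1)!=(j+1)\,j!$. Substituting back, one obtains
\[
-\Delta \psi_{\alpha,c,m}(x) = -\sum_{j=0}^{m-1} c\, a_j\, |x|^{(2-\alpha)j-\alpha},
\]
while the potential term reads
\[
\frac{c}{|x|^\alpha}\psi_{\alpha,c,m}(x)=\sum_{k=0}^{m} c\, a_k\, |x|^{(2-\alpha)k-\alpha}.
\]

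Adding the two sums, the indices $0\leq j\leq m-1$ cancel pairwise and only the $k=m$ term from the potential survives, yielding
\[
\Bigl(-\Delta+\tfrac{c}{|x|^\alpha}\Bigr)\psi_{\alpha,c,m}=c\,a_m\,|x|^{(2-\alpha)m-\alpha},
\]
which is exactly the stated formula. For the final assertion, the exponent $(2-\alpha)m-\alpha$ is nonnegative precisely when $m\geq\frac{\alpha}{2-\alpha}$, in which case $|x|^{(2-\alpha)m-\alpha}$ is bounded on $B$. The only place where care is needed is the re-indexing step and the verification of the coefficient identity; everything else is routine differentiation of radial powers. I would not expect any serious obstacle, since the coefficients of $\psi_{\alpha,c,m}$ are patterned exactly after the power series of the modified Bessel-type function that solves $(-\Delta + c|x|^{-\alpha})u = 0$ formally, which is why telescoping occurs.
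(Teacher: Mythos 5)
Your proposal is correct and follows essentially the same route as the paper: both compute $\Delta|x|^{(2-\alpha)k}$ (the paper writes $\xi=|x|^{2-\alpha}$ and $\Delta\xi^k=\beta_k\xi^{k-1}|x|^{-\alpha}$ with $\beta_k=k(2-\alpha)(N-2+k(2-\alpha))$, matching your $(2-\alpha)^2k(k+\gamma-1)$) and exploit the telescoping cancellation, leaving only the top-order term $c\,a_m|x|^{(2-\alpha)m-\alpha}$. The only cosmetic difference is that the paper \emph{derives} the coefficients by imposing $c\gamma_k=\beta_{k+1}\gamma_{k+1}$, whereas you \emph{verify} that the given coefficients satisfy the same recursion via $\Gamma(\gamma+j+1)=(\gamma+j)\Gamma(\gamma+j)$ — the identical computation read in the opposite direction.
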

\begin{proof}
By a direct calculation, we have for $k \geq 1$ and with $\xi(x)=|x|^{2-\alpha}$
\[
\Delta \xi^k= \frac{k(2-\alpha)(N-2+k(2-\alpha))}{|x|^\alpha} \xi^{k-1}:= \frac{\beta_k\xi^{k-1}}{|x|^\alpha}.
\]
If $\psi_{\alpha,m}(x)=\sum_{k=0}^m \gamma_k \xi^k$ with $\gamma_0=1$, then 
\[\left(-\Delta +\frac{c}{|x|^{\alpha}}\right)\psi_{\alpha,m}(x)
=\frac{1}{|x|^{\alpha}}
\left(\sum_{k=0}^{m-1} (c\gamma_k-\beta_{k+1}\gamma_{k+1})\xi^k\right)
+c\gamma_m |x|^{(2-\alpha)m-\alpha}.
\]
Requiring  that the lower order terms vanish, we get  for $k<m$ 
\[
\frac{\gamma_{k+1}}{\gamma_k}
=\frac{c}{\beta_{k+1}}
=\frac{c}{(2-\alpha)^2 (k+1) \left (\frac{N-\alpha}{2-\alpha}+k\right )}
\]
and the formula in the statement follows.
\end{proof}

Using the behavior of $\psi_{\alpha,c,m}$ in a neighborhood of the origin, 
we define the following auxiliary function and related multiplication operator.
\begin{definition}\label{def:phi}
{\bf (i)}
For $0< \alpha<2$ and $c\in\R$, we fix a function $\phi=\phi_{\alpha,c}\in C^\infty (\R^N \setminus \{0\})$ satisfying 
\begin{itemize}
\item[(i-1)] $\phi$ is radial and $\frac{1}{2}\leq \phi\leq 2$ on $\R^N$, 
\item[(i-2)] $\phi\equiv \psi_{\alpha,c,m}$ with $m \in [\frac{\alpha}{2-\alpha},\frac{2}{2-\alpha})$
in the neighbourhood of the origin, 
\item[(i-3)] $\phi(x)\equiv 1$ in a neighbourhood of  infinity. 
\end{itemize}
{\bf (ii)} 
We define the multiplication operator $T: L^p \to L^p$, $Tf=\phi f$ which is bijective from $L^p$ to itself. If $u \in W^{2,p}(\R^N \setminus B_\epsilon)$, for every $\epsilon >0$ we have
\begin{equation} \label{identity}
T^{-1}STu=T^{-1}(-\Delta +c|x|^{-\alpha}) Tu=-\Delta u-2 \frac{\nabla \phi}{\phi} \cdot \nabla u + Vu:= \tilde S u 
\end{equation}
with $V=\frac{\Delta \phi-c|x|^{-\alpha} \phi}{\phi}$ bounded in  $\R^N$.
\end{definition}

Observe that $\phi$ is a polynomial in $|x|^{2-\alpha}$ near the origin, $\phi(x)=1+\kappa |x|^{2-\alpha}+\cdots, $ with  $\kappa=\frac{c}{(2-\alpha)(N-\alpha)}$. In particular $|\nabla \phi| \approx |x|^{1-\alpha}, D_{ij} \phi \approx |x|^{-\alpha}$ near 0,  so that $\phi \in W^{2,p}$ if and only if $\alpha <\frac N p$ and $\phi \in W^{1,p}$ if and only if $\alpha < \frac N p+1$. Finally, $\nabla \phi$ is bounded when $\alpha \leq 1$. Similar remarks hold for $\phi^{-1}$.

\begin{proposition} \label{generation2}
If $0<\alpha<2$ , $N \geq 2$, $1<p<\infty$, $0<\alpha < \frac N p+1 $, then $-S$ with domain $$D(-S)=\{ u \in W^{2,p}(\R^N \setminus B_\epsilon) {\rm \  for\  every\ }\epsilon >0, \frac{ u}{\phi} \in W^{2,p}\}$$ generates an  analytic semigroup of angle $\pi/2$ in $L^p$. 
\end{proposition}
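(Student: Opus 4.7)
My plan is to exploit the similarity transformation $T$ from Definition~\ref{def:phi}: since multiplication by $\phi$ is a topological isomorphism of $L^p$ onto itself (because $1/2\le\phi\le 2$), the operator $S$ on the claimed domain is similar to $\tilde S=T^{-1}ST$ on the standard domain $W^{2,p}$. Because similarity transformations with bounded isomorphisms preserve both generation of $C_0$-semigroups and analyticity angles, it will suffice to prove that $-\tilde S$ generates an analytic semigroup of angle $\pi/2$ in $L^p$ with domain $W^{2,p}$. At the end one translates back: the image $T(W^{2,p})=\{u\in L^p:u/\phi\in W^{2,p}\}$ automatically satisfies $u\in W^{2,p}(\R^N\setminus B_\varepsilon)$ for every $\varepsilon>0$, since $\phi\in C^\infty(\R^N\setminus\{0\})$ is bounded above and away from zero.

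\textbf{Perturbation setup.} Recall $\tilde S u=-\Delta u-2(\nabla\phi/\phi)\cdot\nabla u+Vu$ with $V\in L^\infty$. The operator $-\Delta$ on $W^{2,p}$ generates an analytic semigroup of angle $\pi/2$ in $L^p$ for $1<p<\infty$, and the bounded multiplication by $V$ is an $L^p$-bounded perturbation. By standard perturbation theory for analytic generators, it remains to show that the drift term $Bu:=-2(\nabla\phi/\phi)\cdot\nabla u$ is relatively bounded with respect to $-\Delta$ with \emph{arbitrarily small} relative bound, i.e.\ for every $\varepsilon>0$,
\[
\|B u\|_p\le \varepsilon\,\|\Delta u\|_p+C_\varepsilon\|u\|_p, \qquad u\in W^{2,p}.
\]
Since $\phi\equiv 1$ outside some ball $B_R$, the coefficient $\nabla\phi/\phi$ has compact support, and on $B_R$ one has $|\nabla\phi/\phi|\lesssim |x|^{1-\alpha}$ from the expansion $\phi=1+\kappa|x|^{2-\alpha}+\cdots$.

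\textbf{Main obstacle: the drift estimate.} When $\alpha\le 1$ the coefficient is globally bounded and the standard interpolation $\|\nabla u\|_p\le\varepsilon\|D^2 u\|_p+C_\varepsilon\|u\|_p$ together with Calder\'on--Zygmund yields the claim immediately. The heart of the proof lies in the range $1<\alpha<\frac{N}{p}+1$, where $\beta:=\alpha-1$ satisfies $\beta\in(0,1)$ and $\beta p<N$. For such $\beta$ I will estimate, via H\"older with a suitably chosen exponent $q$,
\[
\bigl\||x|^{-\beta}\nabla u\bigr\|_{L^p(B_R)}\le \bigl\||x|^{-\beta}\bigr\|_{L^{r}(B_R)}\|\nabla u\|_{L^q(B_R)},\qquad \tfrac1p=\tfrac1r+\tfrac1q,
\]
choosing $q$ strictly between $\frac{Np}{N-\beta p}$ and the Sobolev exponent $p^*=\frac{Np}{N-p}$ (or any $q<\infty$ when $p\ge N$, using Morrey's embedding when $p>N$). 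The condition $\beta<1$ is precisely what makes such $q<p^*$ available, and $\beta<N/p$ is exactly what makes $|x|^{-\beta}\in L^{r}(B_R)$. The key is then to upgrade the resulting bound to a small perturbation: the embedding $W^{2,p}(B_{R+1})\hookrightarrow W^{1,q}(B_R)$ is compact (Rellich--Kondrachov, with Morrey in the high-$p$ case), so Ehrling's lemma gives
\[
\|\nabla u\|_{L^q(B_R)}\le\varepsilon\|u\|_{W^{2,p}}+C_\varepsilon\|u\|_{L^p},
\]
and Calder\'on--Zygmund absorbs $\|u\|_{W^{2,p}}$ into $\|u\|_p+\|\Delta u\|_p$. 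This yields the required relative bound with $\varepsilon$ arbitrarily small.

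\textbf{Conclusion.} Standard analytic-semigroup perturbation theory now gives that $-\tilde S$ with domain $W^{2,p}$ generates an analytic semigroup of angle $\pi/2$ on $L^p$; pulling this back by the similarity $T$ produces the analytic semigroup generated by $-S$ with domain $T(W^{2,p})=\{u:u/\phi\in W^{2,p}\}$, which is exactly the set in the statement. The most delicate step is the Hardy--Sobolev estimate for the drift in the regime $N/p\le\alpha<N/p+1$: one must carefully track Sobolev exponents (distinguishing $p<N$, $p=N$, $p>N$) and upgrade H\"older boundedness to an arbitrarily small perturbation via compactness.
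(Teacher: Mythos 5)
Your proof is correct, and its skeleton is exactly the paper's: conjugate $S$ by the isomorphism $T$, observe that $\tilde S_0=-\Delta+V$ with $V\in L^\infty$ generates an analytic semigroup of angle $\pi/2$ on $W^{2,p}$, show that the drift $-2(\nabla\phi/\phi)\cdot\nabla$ is relatively bounded with arbitrarily small bound so that the angle survives, and pull back through $T$, noting that $\phi^{\pm 1}$ are smooth away from the origin and hence preserve $W^{2,p}(\R^N\setminus B_\epsilon)$. Where you genuinely diverge is the engine behind the drift estimate, which is the one nontrivial step. The paper gets it in one line from the multiplicative Hardy inequality of Proposition~\ref{scaling}, applied to $\nabla u$ with the power $\beta=\alpha-1$ (this is precisely where the hypothesis $(\alpha-1)p<N$, i.e.\ $\alpha<\frac Np+1$, enters), followed by Young's inequality and Calder\'on--Zygmund: a global, scale-invariant, quantitative argument with explicit exponents. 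You instead localize to the compact support of $\nabla\phi/\phi$, apply H\"older with $|x|^{-\beta}\in L^r(B_R)$ (available exactly because $\beta p<N$) against $\|\nabla u\|_{L^q(B_R)}$ with $\frac{Np}{N-\beta p}<q<p^*$ (such $q$ exists precisely because $\beta<1$; for $p\geq N$ any large finite $q$ works), and then extract \emph{smallness} not from scaling but from Rellich--Kondrachov compactness via Ehrling's lemma, absorbing $\|u\|_{W^{2,p}}$ by Calder\'on--Zygmund. Your exponent bookkeeping in the three regimes $p<N$, $p=N$, $p>N$ is sound, and the chain $W^{2,p}(B_{R+1})\hookrightarrow\hookrightarrow W^{1,q}(B_R)\hookrightarrow L^p(B_R)$ does satisfy the hypotheses of Ehrling's lemma since $q>p$. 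The trade-off: the paper's route yields quantitative constants and needs no localization, but leans on the dilation structure of $|x|^{-\beta}$; your route produces non-quantitative constants $C_\varepsilon$ and uses the compact support of the coefficient (harmless here, since $\phi\equiv 1$ near infinity by construction), but is more robust, as it would apply verbatim to any compactly supported drift coefficient lying in the appropriate Lebesgue class, with no scale invariance required.
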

\begin{proof}
From \eqref{identity} we have $S=T\tilde S T^{-1}$ and,  $\phi^{-1}\nabla \phi \approx |x|^{1-\alpha}$ near the origin and has compact support. 

The operator $\tilde S_0=-\Delta +V$ is uniformly elliptic with bounded coefficients and hence generates an analytic semigroup $\{e^{- z\tilde {S_0}} \}$  of angle $\frac {\pi}{ 2}$ when endowed with the domain $W^{2,p}$.

Next we consider $\tilde S=\tilde S_0-2 \frac{\nabla \phi}{\phi}$. By Hardy inequality in Proposition \ref{scaling}, since $(\alpha-1)p <N$, $$\|\phi^{-1}\nabla \phi \nabla u \|_p \leq C\|\nabla u\|_p^{1-\alpha} \|D^2 u\|_p^{\alpha} \leq \epsilon \|D^2 u\|_p +C_\epsilon \|\nabla u\|_p   \leq \epsilon \|\Delta  u\|_p +C_\epsilon \|u\|_p,$$
by Calder\'on-Zygmund inequality and standard interpolation inequalities. It follows that the term $\phi^{-1} \nabla \phi \nabla u $ is a small perturbation of $-\Delta$, hence of $-\tilde {S_0}$ and then $-\tilde S$  with domain $W^{2,p}$ generates an analytic semigroup of angle $\frac{\pi}{2}$, by standard perturbation theory of analytic semigroups.

 It follows that $e^{-z S}=Te^{-z \tilde S}T^{-1}$ is analytic in the same region. Finally, $u \in D(-S)$ if and only if $T^{-1} u=\phi^{-1} u \in D(-\tilde S)=W^{2,p}$. This concludes the proof, since both $\phi, \phi^{-1}$ are smooth out of the origin, hence preserve $W^{2,p} (\R^N \setminus B_\epsilon)$. 
\end{proof} 

In the following proposition we characterize $D(-S)$. In particular we prove that, when $\alpha < \frac N p$, the semigroups constructed in Propositions \ref{generation1}, \ref{generation2} coincide

\begin{theorem} \label{domain1}
If $0<\alpha<2$, $N \geq 2$, $1<p<\infty$, $0<\alpha < \frac N p+1$. Let us fix $\eta \in C_c^\infty (\R^N)$, $ \eta \equiv 1$ in $B$. Then
\begin{itemize}
\item[(i)]  if $\alpha < \frac N p$, then $D(-S)=W^{2,p}$;
\item[(ii)] if $ \frac N p \leq \alpha< \frac Np+1$, then $D(-S) \subset W^{1,p}$ and $$D(-S)=\{u=u(0) \eta \phi+u_1, \  u_1 \in W^{2,p}, u_1(0)=0\}.$$
\end{itemize}
\end{theorem}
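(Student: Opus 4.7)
The plan is to read the statement through Proposition 6.2, which identifies $D(-S)$ with the set of $u$ satisfying $u \in W^{2,p}(\R^N\setminus B_\epsilon)$ for every $\epsilon>0$ together with $u/\phi \in W^{2,p}$. Since $\phi$ and $\phi^{-1}$ are smooth on $\R^N \setminus \{0\}$ and bounded, the local regularity off the origin is automatic once $u/\phi \in W^{2,p}$, so both parts reduce to identifying the image $\phi \cdot W^{2,p}$. The central computation I would use is the Leibniz rule
$$D^2(\phi v) = \phi\,D^2 v + 2\,(\nabla\phi)\otimes(\nabla v) + v\,D^2\phi,$$
combined with the near-origin bounds $|\nabla\phi(x)| \approx |x|^{1-\alpha}$ and $|D^2\phi(x)| \approx |x|^{-\alpha}$. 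So $\phi v \in W^{2,p}$ reduces to the two weighted conditions $v/|x|^\alpha\in L^p$ and $\nabla v/|x|^{\alpha-1}\in L^p$; the analogous statement for $\phi^{-1}$ follows from the same reasoning since $\phi^{-1}$ expands near $0$ with the same structure $1-\kappa|x|^{2-\alpha}+\cdots$.

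For case (i), $\alpha<N/p$ gives $\phi\in W^{2,p}$, and the Rellich inequality on $W^{2,p}$ (Proposition 4.4 with $n=0$) supplies $v/|x|^\alpha\in L^p$ for every $v\in W^{2,p}$. If $\alpha>1$ I would invoke the Hardy inequality of Proposition 3.4 applied to $\nabla v\in W^{1,p}$ with exponent $\alpha-1<N/p$ to get $\nabla v/|x|^{\alpha-1}\in L^p$; if $\alpha\le 1$ the weight is bounded. Multiplication by $\phi$ therefore preserves $W^{2,p}$, and the symmetric argument with $\phi^{-1}$ gives the reverse inclusion, so $D(-S)=W^{2,p}$.

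For case (ii), $\alpha\ge N/p$ together with $\alpha<2$ forces $p>N/2$, so Morrey's embedding gives continuity for $W^{2,p}$ functions. For $u\in D(-S)$ I would set $v=u/\phi\in W^{2,p}$ and $w=v-v(0)\eta\in W^{2,p}$, which satisfies $w(0)=0$. Then $u=v(0)\,\eta\phi+\phi w$, and $v(0)=u(0)$ because $\phi(0)=1$. Corollary 5.3 provides the Rellich inequality on $\{u\in W^{2,p}:u(0)=0\}$ in this regime, giving $w/|x|^\alpha\in L^p$, and Proposition 3.4 applied to $\nabla w$ gives $\nabla w/|x|^{\alpha-1}\in L^p$ since $\alpha-1<N/p$; hence $\phi w\in W^{2,p}$ with $(\phi w)(0)=0$, which is the desired $u_1$. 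The reverse inclusion is handled by running the same computation on $u_1/\phi$ with $\phi^{-1}$ in place of $\phi$. The embedding $D(-S)\subset W^{1,p}$ then follows because $\eta\phi\in W^{1,p}$ (the condition $|\nabla\phi|\approx|x|^{1-\alpha}$ is in $L^p_{\mathrm{loc}}$ iff $\alpha<N/p+1$) and $u_1\in W^{2,p}$.

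The hard part will be the cross term $(\nabla\phi)\cdot(\nabla v)$ when $\alpha>1$: the singularity $|\nabla\phi|\sim|x|^{1-\alpha}$ is genuine and forces a Hardy estimate on the vector field $\nabla v$ rather than on $v$ itself; the admissible range $\alpha-1<N/p$ for such a bound is precisely the threshold $\alpha<N/p+1$ appearing in the hypothesis. Endpoint issues (for instance $p=N$ in Morrey, or limiting integrability of the weights) will need the standard workaround of replacing $L^\infty$ by $L^q$ for a large $q$ and using H\"older's inequality, exactly as in the proof of Proposition 4.4.
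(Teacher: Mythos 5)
Your proposal is correct and follows essentially the same route as the paper's own proof of Theorem \ref{domain1}: you reduce the statement via Proposition \ref{generation2} to deciding when multiplication by $\phi^{\pm1}$ preserves $W^{2,p}$, split off $v(0)\eta$ in case (ii), and control the Leibniz cross terms exactly as the paper does, namely through Corollary \ref{Rellich1} (resp.\ Proposition \ref{Rellich} with $n=0$ in case (i)) together with the Hardy inequality of Proposition \ref{scaling} applied to the gradient with exponent $\alpha-1$, which is precisely the threshold $\alpha<\frac{N}{p}+1$. Apart from misguessed cross-reference numbers (your ``Corollary 5.3'' is Corollary \ref{Rellich1} and your ``Proposition 6.2'' is Proposition \ref{generation2}), your write-up is, if anything, more detailed than the paper's terse argument.
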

\begin{proof}
For (ii) note that $p >\frac N 2$. We pick $u \in D(-S)$ and let $v=\phi^{-1} u \in W^{2,p}$. We split $v=v(0)\eta+v_1$ with $v_1=v-v(0)\eta \in W^{2,p}$ and $v_1(0)=0$. Since  $\alpha < \frac N p +1$ Corollary \ref{Rellich1} and Proposition \ref{scaling} yield $\frac {v_1}{ |x|^{\alpha}}, \frac{\nabla v_1}{|x|^{\alpha-1}} \in L^p$. Also  $|\nabla \phi| \approx |x|^{1-\alpha}, |D^2 \phi| \approx |x|^{-\alpha} $ and  this easily implies that $u_1=\phi v_1 \in W^{2,p}$ and the representation follows.
Conversely, if $u$ has the above form, the same argument gives $\phi^{-1} u \in W^{2,p}$.

The proof of (i) is similar but simpler, without splitting the function $v$.
\end{proof} 

\begin{remark}
Note that, if $\alpha \leq 1$, then $\phi(x)=1+\kappa |x|^{2-\alpha}, \kappa=\frac{c}{(2-\alpha)(N-\alpha)}$.
Note also that, when $\frac N p \leq \alpha < \frac Np+1$, then $D(-S)$ depends on the constant $c$, since this happens for $\phi$.
\end{remark}

We denote by $S_p$, $e^{-z S_p}$ the operator and the semigroup of the above proposition and show consistency for different $p$. In particular, we prove that they coincide with  $S$, $e^{-z S}$ of Section 5, defined by form methods in $L^2$. 

\begin{proposition} \label{consistency} Under the hypotheses of  Theorem \ref{domain1} we have $e^{-z S_p}f=e^{-z S f}$ and $(\lambda +S_p)^{-1}f=(\lambda +S)^{-1}f$ for $f \in L^p \cap L^2$ and $\lambda$ big enough. In particular the semigroups and the resolvent are consistent for different (admissible) values of $p$ and $e^{-t S_p}$ is positive.
\end{proposition}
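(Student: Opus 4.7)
The goal is the resolvent identity $(\lambda+S_p)^{-1}f=(\lambda+S)^{-1}f$ for $f\in L^p\cap L^2$ and $\lambda$ large; the semigroup identity then follows from the Post--Widder formula, and positivity of $e^{-tS_p}$ passes from the $L^2$ positivity of $e^{-tS}$ (which holds because $a(u^+,u^-)=0$) by density of $L^p\cap L^2$ in $L^p$. Since $N\ge 2$, Lemma \ref{r2} ensures that $C_c^\infty(\R^N\setminus\{0\})$ is dense in $L^p\cap L^2$, so I may take $f\in C_c^\infty(\R^N\setminus\{0\})$ throughout.

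Fix such an $f$ and set $u:=(\lambda+S_p)^{-1}f$ for $\lambda$ large enough that both operators are invertible. Elliptic regularity applied to the classical equation $(\lambda-\Delta+c|x|^{-\alpha})u=f$ on $\R^N\setminus\{0\}$ gives $u\in C^\infty(\R^N\setminus\{0\})$. Since $f$ has compact support, outside $\mathrm{supp}(f)$ the function $u$ solves a coercive elliptic equation whose zero-order coefficient $\lambda+c|x|^{-\alpha}$ is uniformly positive, and a standard comparison argument (with, say, the Green function of $\lambda-\Delta$) yields exponential decay of $u$ and $\nabla u$ at infinity.

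The decisive step is showing $u\in H^1$. In case (ii) of Theorem \ref{domain1}, $u=u(0)\eta\phi+u_1$ with $u_1\in W^{2,p}$, $u_1(0)=0$; the constraint $N/p\le\alpha<N/p+1<2$ forces $p>N/2\ge 2N/(N+2)$ for $N\ge 2$, so the Sobolev embedding $W^{2,p}\hookrightarrow W^{1,q}$ with some $q\ge 2$ yields $u_1\in H^1_{\mathrm{loc}}$. Moreover $\nabla(\eta\phi)\sim|x|^{1-\alpha}$ near the origin lies in $L^2_{\mathrm{loc}}$ since $2(1-\alpha)+N>0$, which holds whenever $\alpha<2\le 1+N/2$. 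Combined with the exponential decay from the previous step one concludes $u\in H^1$. In case (i) we have $u\in W^{2,p}$; for $p\ge 2$ the local $H^1$ regularity is immediate, and for $1<p<2$ a bootstrap on $-\Delta u=f-\lambda u-c|x|^{-\alpha}u$ using iterated Sobolev embeddings (permissible since $\alpha<N/p$) raises the integrability of $u$ until $u\in H^1$.

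Once $u\in H^1$ is secured, multiplying the classical equation by any $v\in C_c^\infty(\R^N\setminus\{0\})$ and integrating by parts produces the form identity $a(u,v)+\lambda(u,v)_{L^2}=(f,v)_{L^2}$; since $C_c^\infty(\R^N\setminus\{0\})$ is dense in $H^1$ (Lemma \ref{r2}), this extends to all $v\in H^1$, and the definition \eqref{Sform} of $S$ gives $u\in D(S)$ with $(\lambda+S)u=f$, so $u=(\lambda+S)^{-1}f$. The main obstacle lies precisely in verifying $u\in H^1$ in case (ii) and in the $1<p<2$ branch of case (i), where the $W^{2,p}$ part of $u$ does not embed directly into $H^1_{\mathrm{loc}}$ and the correction term $\eta\phi$ has gradient of size $|x|^{1-\alpha}$ that must be handled explicitly; the rest of the proof is essentially bookkeeping. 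Consistency of the family $\{e^{-zS_p}\}$ for different admissible $p$ then follows by identifying each $(\lambda+S_p)^{-1}$ pairwise with $(\lambda+S)^{-1}$ on the common intersection of $L^p$ spaces.
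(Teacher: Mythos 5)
Your proposal is correct in outline, but it takes a genuinely different route from the paper. The paper splits the statement in two: first, consistency in $p$ comes for free from the construction in Proposition \ref{generation2}, since $e^{-zS_p}=Te^{-z\tilde{S}}T^{-1}$ with the multiplication operator $T$ independent of $p$ and $\tilde{S}$ a small perturbation of a uniformly elliptic operator with bounded coefficients, whose semigroups are consistent across all $L^p$; second, it then only needs to identify $S_2$ with the form operator $S$, and does so by showing that $S$ extends $S_2$: here $D(S_2)\subset H^1$ is immediate from Theorem \ref{domain1} with $p=2$, the regular part $u_1\in H^2$ is integrated by parts directly, and the single singular term $w=\eta\phi$ is handled by a limit over $B_R\setminus B_\epsilon$, the boundary integral on $\partial B_\epsilon$ vanishing because $|\nabla w|\le C\epsilon^{1-\alpha}$ and $\alpha<2\le N$. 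You instead identify $(\lambda+S_p)^{-1}f$ with $(\lambda+S)^{-1}f$ directly for every admissible $p$ on $f\in C_c^\infty(\R^N\setminus\{0\})$ and deduce $p$-consistency afterwards through $L^2$. Testing against $v\in C_c^\infty(\R^N\setminus\{0\})$ neatly eliminates all boundary terms, replacing the paper's $\partial B_\epsilon$ computation; note, however, that the density facts you need are not Lemma \ref{r2}: density of $C_c^\infty(\R^N\setminus\{0\})$ in $L^p\cap L^2$ is elementary (a point is a null set), and its density in $H^1$ is Lemma \ref{decomposition}(i) with $p=2\le N$. Also the chain ``$N/p\le\alpha<N/p+1<2$'' is a slip; what you use, and what is true, is $N/p\le\alpha<2$, hence $p>N/2$.

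The price of working at general $p$ is that $u=(\lambda+S_p)^{-1}f\in H^1$ is no longer free, and the one step you assert rather than prove is the exponential decay of $u$ and $\nabla u$. A comparison argument with barriers $Me^{-\delta|x|}$ for $-\Delta+\lambda+c|x|^{-\alpha}$ (with $\lambda$ large enough that the zero-order coefficient is uniformly positive outside a fixed ball) presupposes that $u$ is bounded and tends to $0$ at infinity, which is not immediate from $u\in D(S_p)$ when $p\le N/2$ (possible in your case (i)); one first needs a bootstrap on exterior unit balls, where the potential is bounded, to get $\|u\|_{L^\infty(B_1(x_0))}\to 0$ as $|x_0|\to\infty$, and gradient decay then follows from interior $W^{2,p}$ estimates on annuli. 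These arguments are standard and fillable, so I would not call this a gap, but it is precisely the machinery the paper's route avoids by proving the identification only at $p=2$, where $H^1$-membership is built into the domain description, and outsourcing all other $p$ to the similarity transform.
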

\begin{proof}
First we show that the semigroups are consistent for different values of $p$. Keeping the notation of Proposition \ref{generation2}, we have in fact $e^{-z S_p}=Te^{-z \tilde{S}}T^{-1}$. The map $T$ is independent of $p$ and the same holds for the  semigroup $e^{-z \tilde{S}}$, since $\tilde S=\tilde{S_0}-2 \phi^{-1} \nabla \phi$ is a small perturbation of the uniformly elliptic operator $\tilde{S_0}$.

Finally, let us show that $S_2=S$ in $L^2$. We first note that $\alpha< \frac N 2+1$, since $N \geq 2$ and $\alpha <2$, so that Proposition \ref{generation2} applies with $p=2$. It is sufficient to prove that $S$ is an extension of $S_2$, since both operators generate a semigroup.

Let $u \in D(S_2) \subset H^1, S_2u=f \in L^2$. 

If $ \alpha < \frac N 2$, then $u \in H^2$, by Theorem \ref{domain1} and, integrating by parts, $a(u,v)=\int_{\R^N} fv$ for every $v \in H^1$, so that $u \in D(S)$ and $Su=f$.

Finally, assume that $\frac N 2 \leq \alpha <  \frac N 2+1$ and write $u=u(0) \phi \eta+ u_1$ with $u_1 \in H^2$. For $u_1$ we may integrate by parts obtaining $a(u_1,v)=\int_{\R^N} (S_2 u_1) v$ for every $v \in H^1$. It remains to show that the same holds for $w=\phi\eta$ which is not in $H^2$. Let us fix a ball $B_R$ containing the support of $\eta$ and $v \in C_c^\infty (\R^N)$. Recalling that $S_2 w$ is bounded we get 
\begin{align*}
\int_{\R^N} (S_2 w) v&=\lim_{\epsilon \to 0} \int_{B_R \setminus B_\epsilon} \left (-\Delta w+ c|x|^{-\alpha}w\right ) v \\
&=\lim_{\epsilon \to 0} \left(\int_{B_R \setminus B_\epsilon} \nabla w \cdot \nabla v+\int_{\partial B_\epsilon} v \frac{\partial w}{\partial n}\, d\sigma \right )+\int_{\R^N} c|x|^{-\alpha} wv \\
&=\int_{\R^N} \left(\nabla w\cdot \nabla v +c|x|^{-\alpha} wv\right)
\end{align*}
since $|\nabla w| \leq C \epsilon^{1-\alpha}$ on $\partial B_\epsilon$ and $v$ is bounded. This shows that $a(w,v)= \int_{\R^N} (S_2w) v$ for all $v \in C_c^\infty$ and, by density, for all $v \in H^1$.
\end{proof} 

We can finally prove that the semigroup $e^{-z S}$ consists of bounded operators in $L^p$ for all $1<p<\infty$ and it is strongly continuous.

\begin{corollary} \label{allalpha}
If $0<\alpha <2$, $N \geq 2$, then $\{e^{-zS}\}$ extrapolates to an analytic semigroup of angle $\frac{\pi}{2}$ in $L^p$ for every $1<p<\infty$.
\end{corollary}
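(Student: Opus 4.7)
The plan is to use duality. The semigroup $e^{-zS_p}$ has already been constructed in Proposition \ref{generation2} under the condition $\alpha < \frac{N}{p}+1$, equivalently $p(\alpha-1)<N$. When $\alpha\le 1$ this condition is automatic for every $1<p<\infty$ and there is nothing to prove. So I focus on the regime $1<\alpha<2$, where the direct construction covers exactly $p<\frac{N}{\alpha-1}$, and I must still reach the range $p\ge\frac{N}{\alpha-1}$.

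First I note that the restriction $\alpha<2$ together with $N\ge 2$ gives $\alpha-1<1\le\frac{N}{2}$, so $\frac{N}{\alpha-1}>2$. Hence any $p\ge\frac{N}{\alpha-1}$ satisfies $p>2$, and its H\"older conjugate $p'<2<\frac{N}{\alpha-1}$. In particular $p'(\alpha-1)<N$, so Proposition \ref{generation2} provides an analytic semigroup $\{e^{-zS_{p'}}\}$ of angle $\pi/2$ on $L^{p'}$, consistent with the $L^2$-semigroup by Proposition \ref{consistency}.

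Next I transport this to $L^p$ by adjoints. Since $S_2$ is selfadjoint, I define
\[
U_z f := \bigl(e^{-\bar z\,S_{p'}}\bigr)^{*} f, \qquad f\in L^p,\ \mathrm{Re}\,z>0.
\]
Weak analyticity, $\langle g,U_z f\rangle=\langle e^{-\bar z\,S_{p'}}g,f\rangle$ for $g\in L^{p'}$, together with the commuting semigroup law give that $U_z$ is an analytic semigroup on $L^p$ of angle $\pi/2$. Strong continuity at $z=0$ follows because $L^p$ is reflexive, so the adjoint of a $C_0$-semigroup on $L^{p'}$ is again a $C_0$-semigroup. Consistency with $e^{-zS_2}$ is checked on the dense set $L^p\cap L^2$: for $f\in L^p\cap L^2$ and $g\in L^{p'}\cap L^2$,
\[
\langle g,U_z f\rangle=\langle e^{-\bar z\,S_{p'}}g,f\rangle=\langle e^{-\bar z\,S_2}g,f\rangle=\langle g,e^{-z\,S_2}f\rangle,
\]
using Proposition \ref{consistency} and selfadjointness of $S_2$; density of $L^{p'}\cap L^2$ in $L^{p'}$ then forces $U_z f=e^{-zS_2}f$ in $L^p$. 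Thus $U_z$ genuinely extrapolates the $L^2$-semigroup.

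I expect no serious obstacle here, since all the heavy lifting was done in constructing $S_{p'}$ via the similarity transform $T$ and perturbation theory in Proposition \ref{generation2}, and in the consistency statement of Proposition \ref{consistency}. The only point that deserves attention is verifying that every $p\ge\frac{N}{\alpha-1}$ does admit an admissible dual exponent $p'$; this is precisely where the hypothesis $\alpha<2$ and $N\ge 2$ enters, forcing $\frac{N}{\alpha-1}>2$ and hence $p'<2<\frac{N}{\alpha-1}$. Once this arithmetic is in place, reflexivity of $L^p$ for $1<p<\infty$ automatically upgrades the adjoint semigroup to a strongly continuous analytic semigroup of the correct angle.
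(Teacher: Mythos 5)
Your proof is correct and takes essentially the same route as the paper's: the direct construction of Proposition \ref{generation2} covers every $p$ with $\alpha<\frac{N}{p}+1$ (in particular all $1<p\leq 2$, since $N\geq 2$), and the remaining exponents are reached by taking adjoints, using the selfadjointness of $S_2$ and the consistency of Proposition \ref{consistency}. The additional bookkeeping you supply --- reflexivity of $L^p$ yielding strong continuity and analyticity of the adjoint semigroup, and the arithmetic check that $p\geq\frac{N}{\alpha-1}$ forces $p'<2<\frac{N}{\alpha-1}$ --- merely makes explicit what the paper's two-line proof leaves implicit.
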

\begin{proof} In fact $\{e^{-z S}\}$ extrapolates to $\{e^{-z S_p}\}$ if $\alpha < \frac N p+1$, in particular this happens if $ 1<p \leq 2 \leq N$. Since $S$ is selfadjoint, $\{e^{-z S}\}$ is analytic in $L^{p'}$ whenever $\{e^{-z S}\}$ is so in $L^p$, and this completes the proof.
\end{proof}

\subsection{The case $ \frac N p+1 \leq \alpha <2$}

We need the following lemma which is the companion to Lemma \ref{lem:psi_alpha}.
\begin{lemma}\label{lem:psitilde_alpha}
For $1<\alpha <2$, 
define the function $\widetilde{\psi}_{\alpha,c,m}$ as 
\[
\widetilde{\psi}_{\alpha,c,m}(x)=
\sum_{k=0}^m
\frac{c^k\Gamma(\frac{N}{2-\alpha}+1)}{(2-\alpha)^{2k}k!\Gamma(\frac{N}{2-\alpha}+1+k)}
|x|^{(2-\alpha)k}.
\]
Then  
\[
\left(-\Delta+\frac{c}{|x|^\alpha}\right)\big(x_j\widetilde{\psi}_{\alpha,c,m}\big)=
Cx_j|x|^{(2-\alpha)m-\alpha}.
\]
In particular, if $m\geq \frac{\alpha-1}{2-\alpha}$, then $\left(-\Delta+\frac{c}{|x|^\alpha}\right)\big(x_j\widetilde{\psi}_{\alpha,m}\big)\in L^\infty(B)$. 
\end{lemma}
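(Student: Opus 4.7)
The lemma is the direct analogue of Lemma \ref{lem:psi_alpha} for first-degree spherical harmonics $x_j$, so my plan is to mimic that proof, treating $x_j\xi^k$ (with $\xi(x)=|x|^{2-\alpha}$) as the elementary building blocks and isolating the single algebraic identity that changes.

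The key observation is that $x_j$ is harmonic, so by Leibniz
\[
\Delta(x_j f)=2\partial_j f+x_j\Delta f.
\]
Applied to $f=|x|^s$, using $\partial_j|x|^s=s\,x_j|x|^{s-2}$ and $\Delta|x|^s=s(s+N-2)|x|^{s-2}$, this gives the clean formula
\[
\Delta(x_j|x|^s)=s(s+N)\,x_j|x|^{s-2}.
\]
Taking $s=(2-\alpha)k$ for $k\geq 1$ yields
\[
\Delta(x_j\xi^k)=\tilde\beta_k\,\frac{x_j\xi^{k-1}}{|x|^\alpha},\qquad \tilde\beta_k:=(2-\alpha)k\bigl((2-\alpha)k+N\bigr)=(2-\alpha)^2\,k\!\left(k+\frac{N}{2-\alpha}\right),
\]
which is exactly the analogue of the formula for $\beta_k$ in Lemma \ref{lem:psi_alpha}, with $N-\alpha$ replaced by $N$ (reflecting the extra degree of the harmonic $x_j$).

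Writing $\widetilde\psi_{\alpha,c,m}=\sum_{k=0}^m\tilde\gamma_k\xi^k$ with $\tilde\gamma_0=1$, exactly as in the previous lemma I compute
\[
\left(-\Delta+\frac{c}{|x|^\alpha}\right)\bigl(x_j\widetilde\psi_{\alpha,c,m}\bigr)
=\frac{x_j}{|x|^\alpha}\left[\sum_{k=0}^{m-1}(c\tilde\gamma_k-\tilde\beta_{k+1}\tilde\gamma_{k+1})\xi^k+c\tilde\gamma_m\xi^m\right].
\]
Imposing the cancellation of the lower-order terms gives the recursion
\[
\frac{\tilde\gamma_{k+1}}{\tilde\gamma_k}=\frac{c}{\tilde\beta_{k+1}}=\frac{c}{(2-\alpha)^2(k+1)\!\left(k+1+\frac{N}{2-\alpha}\right)},
\]
which telescopes (using $\prod_{j=1}^k(j+\tfrac{N}{2-\alpha})=\Gamma(k+1+\tfrac{N}{2-\alpha})/\Gamma(1+\tfrac{N}{2-\alpha})$) to the closed form for $\tilde\gamma_k$ displayed in the statement. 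The remaining term is $c\tilde\gamma_m x_j|x|^{(2-\alpha)m-\alpha}$, giving the claimed formula with $C=c\tilde\gamma_m$.

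For the final assertion, the remainder is bounded on $B$ by a constant times $|x|\cdot|x|^{(2-\alpha)m-\alpha}=|x|^{(2-\alpha)m-\alpha+1}$, which stays bounded precisely when $(2-\alpha)m-\alpha+1\geq 0$, i.e.\ $m\geq\frac{\alpha-1}{2-\alpha}$. I expect no real obstacle: the whole proof is a direct computation, and the only spot to be slightly careful about is tracking the shift $N-\alpha\mapsto N$ in the Gamma factors when solving the recursion.
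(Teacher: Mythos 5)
Your proof is correct and follows essentially the same route as the paper: the paper's proof likewise reduces everything to the single identity $\Delta\left(x_j\xi^{k}\right)=k(2-\alpha)\bigl(N+k(2-\alpha)\bigr)x_j\xi^{k-1}|x|^{-\alpha}$ and then repeats the recursion argument of Lemma \ref{lem:psi_alpha} verbatim. Your derivation of that identity via harmonicity of $x_j$ and the Leibniz rule, the telescoped Gamma-quotient for $\tilde\gamma_k$, and the boundedness check $|x_j||x|^{(2-\alpha)m-\alpha}\leq|x|^{(2-\alpha)m-\alpha+1}$ are all exactly what the paper's (more terse) proof intends.
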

\begin{proof}
Put $\xi(x)=|x|^{2-\alpha}$. 
It suffices to observe 
\[
\Delta\left(x_j\xi^{k}\right)
=
k(2-\alpha)\big(N+k(2-\alpha)\big)\frac{x_j\xi^{k-1}}{|x|^{\alpha}}.
\]
The rest is the same as  in the 
proof of Lemma \ref{lem:psi_alpha}.
\end{proof}
\begin{definition}\label{def:phij}
For $0< \alpha<2$ and $c\in\R$, we fix a function $\phi_j\in C^\infty (\R^N \setminus \{0\})$ satisfying 
\begin{itemize}
\item[(i-1)] $\phi_j= x_j\widetilde{\psi}_{\alpha,c,m}(x)$ with $m \in [\frac{\alpha-1}{2-\alpha},\frac{1}{2-\alpha})$
in the neighbourhood of the origin, 
\item[(i-2)] $\phi_j\equiv 0$ in a neighbourhood of  infinity. 
\end{itemize}
\end{definition}
Note that $\phi_j(x)=x_j(1+c_1 |x|^{2-\alpha}+\dots)$ near zero.

\begin{lemma} \label{ultraweak} Let  $N \geq 2$ and $w$ be the function $\eta \phi$ with $\phi$ as in Definition \ref{def:phi} and  $\eta \in C_c^\infty$ or one of the functions $\phi_j$ of Definition \ref{def:phij}. Then for every $v \in C_c^\infty$
$$
\int_{\R^N} w(-\Delta v+\frac{c}{|x|^{\alpha}} v)=\int_{\R^N} fv, \qquad f =-\Delta w+\frac{c}{|x|^{\alpha}} w \in L^\infty {\ \rm with\ compact\ support}.
$$ 
\end{lemma}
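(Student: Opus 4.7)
The statement is essentially a Green's-identity computation carried out off the origin and passed to the limit. Here is the plan.

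First, I would record the qualitative properties of $w$ that we have in hand. By Lemmas \ref{lem:psi_alpha} and \ref{lem:psitilde_alpha} together with the definitions of $\phi$ and $\phi_j$ (and the choice of $m$), the function $f:=-\Delta w+c|x|^{-\alpha}w$ is bounded on $\R^N$ (it is bounded near $0$ by the lemmas, smooth away from $0$, and vanishes outside the compact support of $\eta$ or of $\phi_j$). Moreover $w\in C^\infty(\R^N\setminus\{0\})$ and has compact support. Near the origin we have, in both cases, the pointwise bounds
\[
|w(x)|\le C\quad\text{and}\quad |\nabla w(x)|\le C|x|^{1-\alpha}\qquad(w=\eta\phi),
\]
\[
|w(x)|\le C|x|\quad\text{and}\quad |\nabla w(x)|\le C\qquad(w=\phi_j),
\]
since $\phi=1+\kappa|x|^{2-\alpha}+\cdots$ and $\phi_j=x_j(1+c_1|x|^{2-\alpha}+\cdots)$ near $0$.

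Next, fix $v\in C_c^\infty(\R^N)$ and integrate by parts on $\R^N\setminus B_\epsilon$, where $w$ is smooth. Writing the Laplacian symmetrically via Green's second identity gives
\[
\int_{\R^N\setminus B_\epsilon}\!\!w\Bigl(-\Delta v+\tfrac{c}{|x|^\alpha}v\Bigr)\,dx
=\int_{\R^N\setminus B_\epsilon}\!\! fv\,dx
+\int_{\partial B_\epsilon}\!\!\Bigl(w\,\tfrac{\partial v}{\partial n}-v\,\tfrac{\partial w}{\partial n}\Bigr)d\sigma,
\]
where $n$ is the unit normal pointing into $B_\epsilon$. The right-hand volume integral converges to $\int_{\R^N} fv\,dx$ as $\epsilon\to 0$ because $f\in L^\infty$ has compact support. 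For the left-hand volume integral, the integrand $w(-\Delta v+c|x|^{-\alpha}v)$ is dominated by $C(1+|x|^{-\alpha})$ (resp.\ $C|x|(1+|x|^{-\alpha})$) on the support of $v$, both of which are locally integrable since $\alpha<2\le N$, so dominated convergence applies.

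The only real point is the vanishing of the boundary integral. Using $|\partial B_\epsilon|=O(\epsilon^{N-1})$ and the pointwise bounds above together with $\|v\|_\infty+\|\nabla v\|_\infty\le C$, we get
\[
\left|\int_{\partial B_\epsilon}\!\!\Bigl(w\tfrac{\partial v}{\partial n}-v\tfrac{\partial w}{\partial n}\Bigr)d\sigma\right|
\le C\bigl(\epsilon^{N-1}+\epsilon^{N-\alpha}\bigr)\quad(w=\eta\phi),
\]
\[
\left|\int_{\partial B_\epsilon}\!\!\Bigl(w\tfrac{\partial v}{\partial n}-v\tfrac{\partial w}{\partial n}\Bigr)d\sigma\right|
\le C\bigl(\epsilon^{N}+\epsilon^{N-1}\bigr)\quad(w=\phi_j),
\]
and both tend to $0$ because $\alpha<2\le N$. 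Letting $\epsilon\to 0$ yields the claimed identity. The main (very mild) obstacle is the first case, where $\nabla\phi$ blows up like $|x|^{1-\alpha}$ when $\alpha>1$; however the surface measure $\epsilon^{N-1}$ always dominates this singularity, which is precisely why the argument used for $\phi$ in the proof of Proposition \ref{consistency} in the $L^2$ setting extends verbatim here for general $v\in C_c^\infty$.
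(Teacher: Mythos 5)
Your proof is correct and follows essentially the same route as the paper: Green's second identity on $\R^N\setminus B_\epsilon$, with the boundary integral vanishing as $\epsilon\to 0$ thanks to the bounds $|w|\le C$, $|\nabla w|\le C|x|^{1-\alpha}$ (resp.\ $|w|\le C|x|$, $|\nabla w|\le C$ for $\phi_j$). The paper states this in one line; you have merely made the boundary estimates and the dominated-convergence step for the volume integrals explicit, which is consistent with the intended argument.
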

{\sc Proof.} We write the difference between the left and right hand side as
$$\lim_{\epsilon \to 0} \int_{\R^N \setminus B_\epsilon} \left (v\Delta w-w \Delta v\right )= \lim_{\epsilon \to 0} \int_{\partial B_\epsilon}\left (w \frac{\partial v}{\partial n}-v\frac{\partial w}{\partial n} \right ) d\sigma=0
$$
since $w$ is bounded near the origin and $\nabla w$ grows at most as $|x|^{1-\alpha}$ as $x \to 0$.

\qed

By Corollary \ref{allalpha} we may introduce $-S_p$ the generator of the extrapolated semigroup in $L^p$ for every $1<p<\infty$. Clearly, if $\alpha < \frac N p+1$, then $-S_p=-\Delta + \frac{c}{|x|^{\alpha}}$ on $D(S_p)$ given by Theorem \ref{domain1}.

In the following theorem we use the functions $\phi, \phi_j$ of Definitions \ref{def:phi} , \ref{def:phij} and fix $\eta \in C_c^\infty$ such that $\eta \equiv 1$ in $B$.

\begin{theorem} \label{domain2}
Assume that $N \geq 3$ and $\frac N p+1 \leq \alpha <2$. Then 
\begin{equation} \label{domainplarge}
D(S_p)=W_0^{2,p}\oplus {\rm span}\{\eta\phi, \phi_1,\ldots,\phi_N\}.
\end{equation}
\end{theorem}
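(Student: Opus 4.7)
The plan is to use duality and the already-characterised domain on the conjugate exponent. Since $N\ge 3$ and $\alpha<2$ imply $2\alpha<N+1$, one checks $\alpha<N-\alpha+1\le N-N/p=N/p'$, so Theorem~\ref{domain1}(i) applies to $p'$ and gives $D(S_{p'})=W^{2,p'}$ with $S_{p'}v=-\Delta v+c|x|^{-\alpha}v$. By Proposition~\ref{consistency} and the self-adjointness of $S$ in $L^2$, $S_p$ is the Banach-space adjoint of $S_{p'}$. Since $\alpha p'<N$, $|x|^{-\alpha}v\in L^{p'}$ for every $v\in C_c^\infty(\R^N)$, so $|x|^{-\alpha}u$ is a well-defined distribution for $u\in L^p$; together with density of $C_c^\infty$ in $W^{2,p'}$ this identifies $D(S_p)$ with those $u\in L^p$ for which $(-\Delta+c|x|^{-\alpha})u=f$ distributionally on $\R^N$ for some $f\in L^p$.

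For the inclusion $\supseteq$: if $u\in W^{2,p}_0$, then $p>N$ and Lemma~\ref{decomposition}(v) give $u(0)=\nabla u(0)=0$, so Corollary~\ref{Rellich2} yields $|x|^{-\alpha}u\in L^p$; approximating $u$ in $W^{2,p}$ by $u_k\in C_c^\infty(\R^N\setminus\{0\})$ (the Rellich estimate applied to $u-u_k$ ensures $|x|^{-\alpha}u_k\to|x|^{-\alpha}u$ in $L^p$) and integrating by parts on the approximants produces the distributional identity with $f=-\Delta u+c|x|^{-\alpha}u$. For $u=\eta\phi$ and $u=\phi_j$, Lemma~\ref{ultraweak} directly supplies the identity with $f\in L^\infty$ of compact support. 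Independence of the direct sum is immediate from Sobolev embedding $W^{2,p}\hookrightarrow C^1$ (since $p>N$): elements of $W^{2,p}_0$ vanish together with their gradients at $0$, while $\eta\phi(0)=1$ and $\nabla\phi_j(0)=e_j$.

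For the reverse inclusion, let $u\in D(S_p)$ and $f=S_pu$. Elliptic regularity on $\R^N\setminus\{0\}$ (applied to $-\Delta u=f-c|x|^{-\alpha}u$, where the right-hand side is in $L^p_{loc}(\R^N\setminus\{0\})$) gives $u\in W^{2,p}_{loc}(\R^N\setminus\{0\})$. I then decompose $u=u^{(0)}+u^{(1)}+u^{(\ge 2)}$ using the bounded projections of Lemma~\ref{projection} associated with the finite sets $J=\{0\}$ and $J$ indexing the degree-one spherical harmonics; these projections commute with $-\Delta$ via Lemma~\ref{heatLpn} and trivially with $|x|^{-\alpha}$, hence with $S_p$, so each summand lies in $D(S_p)$. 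For $u^{(\ge 2)}$, Corollary~\ref{Rellichalways} makes $c|x|^{-\alpha}$ a small perturbation of $-\Delta$ on $W^{2,p}_{\ge 2}$, so the restriction of $S$ to $L^p_{\ge 2}$ with domain $W^{2,p}_{\ge 2}$ generates an analytic semigroup consistent with $e^{-tS_p}$, yielding $u^{(\ge 2)}\in W^{2,p}_{\ge 2}\subset W^{2,p}_0$ by Proposition~\ref{density1}. For the radial component $u^{(0)}$, the bounded bijection $T\colon v\mapsto\phi v$ on $L^p_0$ conjugates $S_p|_{L^p_0}$ to $\widetilde S=-\Delta-2(\nabla\phi/\phi)\cdot\nabla+V$ with $V=S\phi/\phi\in L^\infty$; the key point is that on radial $W^{2,p}$ functions the first-order singular coefficient $\nabla\phi/\phi\sim|x|^{1-\alpha}$ still acts into $L^p$, because radial $W^{2,p}$ functions are $C^1$ with $\nabla v(0)=0$ (they belong to $C^{1,1-N/p}$), giving $|\nabla\phi/\phi||\nabla v|\lesssim r^{2-\alpha-N/p}$ which is $p$-integrable near $0$. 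This identifies $D(S_p)\cap L^p_0$ with $\phi\cdot(W^{2,p}\cap L^p_0)$, and Lemma~\ref{decomposition}(v) splits $W^{2,p}\cap L^p_0$ as $(W^{2,p}_0\cap L^p_0)\oplus\R w_0$; one checks $\phi w_0-\eta\phi\in W^{2,p}_0$, giving the desired decomposition. An entirely analogous argument in $L^p_1$, using Lemma~\ref{lem:psitilde_alpha} and the multiplication by the radial factor $\widetilde\psi_{\alpha,c,m}$ on each degree-one slot, produces $u^{(1)}=\sum_j\beta_j\phi_j+(\text{term in }W^{2,p}_0\cap L^p_1)$.

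The main technical obstacle is precisely the identification, within each invariant subspace $L^p_0$ and $L^p_1$, of the domain of the transformed operator with $W^{2,p}$ restricted to that subspace; the delicate point is that the first-order singular term $(\nabla\phi/\phi)\cdot\nabla$ is \emph{not} a perturbation of $-\Delta$ on general $W^{2,p}$ (Hardy's inequality fails for $(\alpha-1)p\ge N$), and extra vanishing at the origin must be extracted from the spherical-harmonic restriction via Sobolev embedding $W^{2,p}\hookrightarrow C^{1,1-N/p}$. Once this bootstrap estimate is established, standard perturbation theory combined with consistency of the semigroups (Proposition~\ref{consistency}) closes the argument.
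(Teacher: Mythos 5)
Your strategy is genuinely different from the paper's. The paper proves the hard inclusion $D(S_p)\subseteq W^{2,p}_0\oplus{\rm span}\{\eta\phi,\phi_1,\dots,\phi_N\}$ by duality plus an iterative elliptic-regularity bootstrap: starting from the weak identity it gains $u\in W^{2,q}(B_r)$ for small $q$, climbs the Sobolev ladder to H\"older continuity of exponent $2-\alpha-\ep$, subtracts $u(0)\eta\phi$, iterates until $|u_1(x)|\le C|x|$, then subtracts $\sum_j D_ju_1(0)\phi_j$ and iterates once more until $|x|^{-\alpha}u_2\in L^p$, concluding $u_2\in W^{2,p}_0$. You instead implement the alternative route the paper only sketches after Proposition \ref{radial}: split by spherical harmonics, treat $L^p_{\geq 2}$ perturbatively via Corollary \ref{Rellichalways}, and gauge-transform on $L^p_0$ and $L^p_1$. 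This buys a cleaner structural picture and avoids the bootstrap, but it puts all the weight on identifying, inside each invariant subspace, the generator of your transformed/perturbed semigroup with the part of $S_p$ there --- and this is where your write-up has two concrete gaps.

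First, the consistency step. You close the argument by citing Proposition \ref{consistency}, but that proposition is proved only for $\alpha<\frac Np+1$, which is exactly the range excluded here. The identification of the perturbation-generated semigroup on $L^p_{\geq2}$ (and of the conjugated ones on $L^p_0$, $L^p_1$) with the restriction of $e^{-tS_p}$ must be argued separately, e.g.\ by the core argument the paper uses in its $N=2$ theorem, or by a resolvent argument: your $\supseteq$ inclusion already yields the graph inclusion $A\subset S_p$ on each subspace, generation gives surjectivity of $\lambda-A$, and injectivity of $\lambda-S_p$ for large $\lambda$ then forces the parts to coincide. All ingredients are present in your proposal, but as written the step is unjustified. (Similarly, the claim that the projections commute with $S_p$ because they commute with $-\Delta$ and $|x|^{-\alpha}$ is loose here, since $S_p$ is defined only as the adjoint of $S_{p'}$; the clean argument goes through the weak formulation using the boundedness of $I\otimes S$ on $W^{2,p'}$, Lemma \ref{ItensorP}, or through Proposition \ref{radial}.)

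Second, the $L^p_1$ case is \emph{not} ``entirely analogous'' to the radial one, and the naive analogy fails. In the radial case your zero-order coefficient $V=S\phi/\phi$ is bounded by construction of $\psi_{\alpha,c,m}$. Conjugating on $L^p_1$ by the radial factor $\widetilde\psi$ produces instead the coefficient $\bigl(-\Delta\widetilde\psi+c|x|^{-\alpha}\widetilde\psi\bigr)/\widetilde\psi\sim \frac2r\,\widetilde\psi'/\widetilde\psi\sim Cr^{-\alpha}$ near the origin, which is unbounded: Lemma \ref{lem:psitilde_alpha} says that $S$ maps $x_j\widetilde\psi$, not $\widetilde\psi$, to a bounded function, because $\widetilde\psi$ solves the radial equation in dimension $N+2$, not $N$. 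The singular term is neutralized only through a cancellation with the drift: the combination occurring after conjugation is $-2\widetilde\psi'\bigl(\pa_r v-\frac vr\bigr)$, and for $v\in W^{2,p}\cap L^p_1$ with $p>N$ one has $\pa_r v-\frac vr=O(|x|^{1-N/p})$, so the product is $O(|x|^{2-\alpha-N/p})\in L^p(B)$. Equivalently, one should work slot-by-slot with radial profiles and the dimension-$(N+2)$ operator $-\pa_r^2-\frac{N+1}{r}\pa_r+cr^{-\alpha}$, for which $\widetilde\psi$ is exactly the $\psi$-function of Lemma \ref{lem:psi_alpha}. One of these mechanisms must be made explicit; without it, the perturbation argument on $L^p_1$ as you stated it would run into an unbounded potential and break down.
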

\begin{proof} 
By construction, 
we recall that $S_p$ is the adjoint of $S_{p'}$. Note that  $p>N >\frac{N}{N-\alpha}$ since  $N \geq 3$, so that  $p' < \frac{N}{\alpha}$ and, by Theorem \ref{domain1}, $D(S_{p'})=W^{2,p'}$.

Let $W$ be the right hand side of \eqref{domainplarge}. By Corollary \ref{Rellich2} and Lemma \ref{ultraweak}, if $u \in W$ and $v \in C_c^\infty$
$$
\int_{\R^N} u(-\Delta v+\frac{c}{|x|^{\alpha}} v)=\int_{\R^N} fv, \qquad f =-\Delta u+\frac{c}{|x|^{\alpha}} u \in L^p.
$$ 
By density, using Proposition \ref{Rellich}, the above equality
can be extended to any $v \in W^{2,p'}=D(S_{p'})$, 
so that $u \in D(S_{p'}^*)=D(S_p)$ and $S_p u =f$. 

The converse inclusion is longer and will be done in steps. Assume that $u \in D(S_p)=D(S_{p'}^*)$. This means that there exists $f \in L^p$ such that
\begin{equation} \label{weak}
\int_{\R^N} u\left (-\Delta v+ \frac{c}{|x|^\alpha} v \right )=\int_{\R^N} fv
\end{equation}
for every $v \in W^{2,p'}$ and $S_pu=f$. By elliptic regularity for the Laplacian,  see  \cite[Lemma 5.1]{Agmon1959},  we obtain that $u \in W^{2,p}(\R^N \setminus B_\epsilon)$ for every $\epsilon >0$ and $-\Delta u +\frac{c}{|x|^\alpha } u=f$.

\smallskip

{\bf Step 1.} By the H\"older inequality, we have
$$
\left\|\frac{u}{|x|^\alpha}\right\|_{q}^q=
\int_B \frac{|u|^q}{|x|^{\alpha q}} \leq \left (\int_B |u|^p \right )^{\frac  qp} \left (\int_B \frac{1}{|x|^{\frac{\alpha pq}{p-q}} }\right )^{1-\frac q p} \leq C\|u\|_p^q
$$
with $C<\infty$ when $q < \frac{Np}{N+\alpha p} <p$, or 
equivalently, $\frac 1 q > \frac 1p+\frac \alpha N$. 
Rewriting \eqref{weak} as
\begin{equation} \label{weak1}
-\int_{\R^N} u\Delta v=\int_{\R^N}\left ( f-\frac{c}{|x|^\alpha} u \right )v
\end{equation}
for every $v \in W^{2,p'}$ and using local elliptic regularity for the Laplacian, we obtain that $u \in W^{2,q}(B_r)$ for any $r<1$.

\smallskip

{\bf Step 2.} We now iterate step 1. Choose $q< \frac{Np}{N+\alpha p}$ and close to it. 

If $q \geq \frac N2$, then $u \in L^s (B_r)$ for every $s <\infty$, by Sobolev embedding and then $|x|^{-\alpha} u \in L^{\frac N \alpha -\epsilon}$ for every $\epsilon >0$.

If $q < \frac N2$, then by Sobolev embedding again, $u \in L^{q_1}(B_r)$ for $\frac{1}{q_1}= \frac 1q -\frac 2 N$ which we can make close as we want to 
$\frac 1 p +\frac{\alpha-2}{N}$. By iterating (if $q_n < \frac N 2$), $u \in L^{q_{n+1}}(B_r)$ if $\frac{1}{q_{n+1}}=\frac{1}{q_n}-\frac 2N\approx \frac 1p + \frac{n(\alpha-2)}{N}$.
In a finite number of steps, $u \in L^\infty (B_r)$ and then $|x|^{-\alpha} u \in L^{\frac N \alpha -\epsilon}(B_r)$ for any $\epsilon >0$, as above.
Then, by \eqref{weak1}, $u \in W^{2, \frac N \alpha -\epsilon}(B_r) $ for any $\epsilon >0$.

\smallskip

{\bf Step 3.} Choose $\epsilon$ small enough so that $\frac N \alpha -\epsilon > \frac N 2$. By step 2 and Sobolev embedding, $u$ 
is H\"older continuous with the exponent $2-\alpha-\epsilon$, hence $|u(x)-u(0)| \leq C|x|^{2-\alpha-\epsilon}$ for $|x| \leq r$. Let us consider $u_1=u-u(0) \phi \eta$ where $\phi$ is as in Definition \ref{def:phi} and $\eta \in C_c^\infty$, $\eta \equiv 1$ in $B$. Since $\phi(x)=1+\kappa  |x|^{2-\alpha}+\dots$ we get $|u_1(x)| \leq C|x|^{2-\alpha-\epsilon}$ in $B_r$.
By Lemma \ref{ultraweak} the function $\eta\phi$ satisfies \eqref{weak1} with $f \in L^\infty$ with a compact support. By difference, $u_1$ satisfies \eqref{weak1} for some $f \in L^p$.

However, $|x|^{-\alpha} |u_1| \leq \frac{C}{ |x|^{2\alpha-2+\epsilon}} \in L^q(B_r)$ if $ q <\frac{N}{2\alpha-2}$. This last exponent is bigger than $\frac N \alpha$ but smaller than $p$. We can use elliptic regularity again and deduce that $u_1 \in W^{2, \frac{N}{2\alpha-2}+\epsilon}(B_r)$ and hence $u_1$ is H\"older continuous of exponent $4-2\alpha-\epsilon$ if $4-2\alpha \leq 1$. If, instead $4-2\alpha >1$ we get immediately $|u_1(x)| \leq C|x|$.

In the first case we get  $|x|^{-\alpha}|u_1(x)| \leq \frac{C}{ |x|^{3\alpha -4+\epsilon}}$ in $B_r$ and repeat the argument above to obtain H\"older continuity of exponent $(6-3\alpha +\epsilon) \wedge 1$. In a finite number of steps we get H\"older continuity of exponent $(k(2-\alpha) +\epsilon) \wedge 1$ and hence 1, so that $|u_1(x) |\leq C|x|$ in $B_r$.

{\bf Step 4.} $\frac{|u_1|}{|x|^\alpha} \leq \frac{C}{|x|^{\alpha -1}} \in L^q(B_r)$ for any $q< \frac{N}{\alpha-1}$ and then, by elliptic regularity, $u_1 \in W^{2, q}(B_r)$. Since $\frac{N}{\alpha-1} >N$ we obtain that $u_1 \in C^{1, \gamma}(B_r)$ for any $\gamma <1-(\alpha-1)=2-\alpha$. 
Let us define 
$$u_2(x)=
u_1(x)-\sum_{j=1}^N D_j u_1(0) \phi_j(x)
=
u_1(x)-\sum_{j=1}^N D_j u_1(0) x_j\widetilde{\psi}(x),
$$ 
where the $\phi_j$ are those from Definition \ref{def:phij}. As in Step 3, the function $u_2$ satisfies \eqref{weak1} (using again Lemma \ref{ultraweak}) and, moreover, $|u_2(x)| \leq C |x|^{3-\alpha-\epsilon}$ in $B_r$. At this point the same iteration as in Step 3 shows that $u_2 \in W^{2,p}$ and then 
$u_2 \in W_0^{2,p}$ since $u_2(0)=\nabla u_2(0)=0$
(this time the iteration ends when $|x|^{-\alpha} u_2 \in L^p(B_r)$ since in the right hand side of \eqref{weak1}, $f \in L^p$).
\end{proof}

\begin{remark}
The proof shows that if $u=c_0 \eta \phi+\sum_{j=1}^N c_j \phi_j +v \in D(S_p)$, then $u$ is continuous and $c_0=u(0)$. Moreover, $u_1=u-u(0)\eta \phi$ is continuously differentiable and $c_j=D_j u_1(0)$, $j=1, \dots , N$.
\end{remark}

If $N=2$ the inequality $N >\frac{N}{N-\alpha}$ fails but the above proof still works when $p>\frac{2}{2-\alpha}$. Since, by assumption, $\frac 2p +1 \leq \alpha$ we have $p \geq \frac{2}{\alpha -1}$. Therefore, if $\alpha < \frac 32$ then $\frac{2}{\alpha-1} > \frac {2}{2-\alpha}$ and the above theorem holds. Only the case $\frac 32 \leq \alpha <2$ and $ \frac{2}{\alpha-1}  \leq p \leq  \frac {2}{2-\alpha}$ is still missing.

\begin{theorem}
If $N=2$ and $\frac 2p +1 \leq \alpha <2$, then 
\[
D(S_p)=W^{2,p}_0\oplus {\rm span}\{\eta\phi,\phi_1, \phi_2\}.
\]
\end{theorem}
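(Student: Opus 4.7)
For $p>2/(2-\alpha)$ the theorem is already covered by the argument of Theorem~\ref{domain2}, as explained in the remark above; it therefore suffices to treat the remaining case $\alpha\in[3/2,2)$, $p\in[2/(\alpha-1),2/(2-\alpha)]$. Here the dual exponent $p'=p/(p-1)\in[2/\alpha,2/(3-\alpha)]$ satisfies $N/p'\leq\alpha<N/p'+1$, so by Theorem~\ref{domain1}(ii) we have $D(S_{p'})={\rm span}\{\eta\phi\}\oplus W^{2,p'}_1$ with $W^{2,p'}_1:=\{v\in W^{2,p'}:v(0)=0\}$, rather than $W^{2,p'}$. Since $S_p=(S_{p'})^*$, the duality test characterizing $D(S_p)$ must additionally be performed against the extra element $v=\eta\phi$.

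\emph{Inclusion $W\subset D(S_p)$}, with $W:=W^{2,p}_0\oplus{\rm span}\{\eta\phi,\phi_1,\phi_2\}$: exactly as in Theorem~\ref{domain2}, Lemma~\ref{ultraweak} and Corollary~\ref{Rellich2} (noting that $W^{2,p}_0=\{u\in W^{2,p}:u(0)=\nabla u(0)=0\}$ for $p>N$, by Lemma~\ref{decomposition}(v)) yield $f:=-\Delta u+c|x|^{-\alpha}u\in L^p$ for $u\in W$ and $\int u\,S_{p'}v=\int fv$ for every $v\in W^{2,p'}_1$. For the new test $v=\eta\phi$, I would apply Green's identity on $\R^2\setminus B_\epsilon$ and pass to the limit $\epsilon\to 0^+$: the volume integrals converge by dominated convergence since $S_{p'}(\eta\phi)\in L^\infty$ with compact support (by Lemmas~\ref{lem:psi_alpha} and~\ref{ultraweak}), while the boundary terms $\int_{\partial B_\epsilon}(u\,\partial_n(\eta\phi)-\eta\phi\,\partial_n u)\,d\sigma$ are shown to vanish, using $|\eta\phi|\lesssim 1$, $|\nabla(\eta\phi)|\lesssim|x|^{1-\alpha}$ near $0$, the estimates $|\phi_j|\lesssim|x|$ and $|\nabla\phi_j|\lesssim 1$ for the spanning elements, the vanishing of $u,\nabla u$ at $0$ for $u\in W^{2,p}_0$, and $|\partial B_\epsilon|\sim\epsilon$; all contributions are $O(\epsilon^{2-\alpha})\to 0$.

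\emph{Inclusion $D(S_p)\subset W$}: if $u\in D(S_p)$ with $S_pu=f\in L^p$, testing against $v\in C_c^\infty(\R^2\setminus\{0\})\subset W^{2,p'}_1$ yields $-\Delta u+c|x|^{-\alpha}u=f$ weakly on $\R^2\setminus\{0\}$, hence $u\in W^{2,p}(\R^2\setminus B_\epsilon)$ for every $\epsilon>0$ by local elliptic regularity. From here the four-step bootstrap of Theorem~\ref{domain2} transfers essentially verbatim, with the two-dimensional Sobolev embeddings $W^{2,q}(B_r)\hookrightarrow L^{q/(1-q)}$ for $q<N/2=1$ and $W^{2,q}(B_r)\hookrightarrow C^{0,2-2/q}$ for $1<q<2$. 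Successive iterations give $u\in W^{2,2/\alpha-\epsilon}(B_r)\hookrightarrow C^{0,2-\alpha-\epsilon}(B_r)$; subtracting $u(0)\eta\phi$ and iterating produces $u_1$ with $|u_1(x)|\lesssim|x|$; then $u_1\in W^{2,2/(\alpha-1)-\epsilon}(B_r)\subset C^{1,2-\alpha-\epsilon}(B_r)$; finally subtracting $\sum_{j=1}^2 D_j u_1(0)\phi_j$ gives $u_2\in W^{2,p}$ with $u_2(0)=\nabla u_2(0)=0$, whence $u_2\in W^{2,p}_0$ by Lemma~\ref{decomposition}(v).

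The principal new difficulty is the boundary-term computation for the test $v=\eta\phi$ in the forward duality; the $O(\epsilon^{2-\alpha})$ cancellation hinges crucially on $\alpha<2$, in the same spirit as the $L^2$ identification in Proposition~\ref{consistency}. The rest of the argument is a routine two-dimensional adaptation of the proof of Theorem~\ref{domain2}.
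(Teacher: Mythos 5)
You have correctly framed the duality setup (in the residual window $\tfrac32\le\alpha<2$, $\tfrac{2}{\alpha-1}\le p\le\tfrac{2}{2-\alpha}$ the dual exponent satisfies $\tfrac{2}{p'}\le\alpha<\tfrac{2}{p'}+1$, so $D(S_{p'})$ carries the extra element $\eta\phi$ by Theorem \ref{domain1}(ii)), and your forward inclusion $W\subset D(S_p)$ — testing against $W^{2,p'}$-functions vanishing at $0$ via Corollary \ref{Rellich1} and against $\eta\phi$ via Green's identity with boundary terms of order $O(\epsilon^{2-\alpha})$ — is plausible. But the reverse inclusion has a genuine gap, and it is precisely the one that makes this theorem require a separate proof in the paper. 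The bootstrap of Theorem \ref{domain2} starts (Step 1) by placing $|x|^{-\alpha}u$ in some $L^q(B)$ with $q>1$ and invoking local elliptic $L^q$-regularity; the H\"older computation forces $\tfrac1q>\tfrac1p+\tfrac\alpha2$ when $N=2$, and in your window $\tfrac1p\ge\tfrac{2-\alpha}{2}$, so $\tfrac1q>1$, i.e.\ every admissible $q$ is strictly below $1$. Calder\'on--Zygmund/Agmon regularity is false for $q\le1$, so you can never conclude $u\in W^{2,q}(B_r)$ for any usable $q$, and the proposed embedding ``$W^{2,q}\hookrightarrow L^{q/(1-q)}$ for $q<1$'' cannot rescue this since membership in $W^{2,q}$ with $q<1$ is never obtained in the first place. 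This is exactly why the paper states that the proof of Theorem \ref{domain2} works for $N=2$ only when $p>\tfrac{2}{2-\alpha}$: the ``missing'' window is the one where the iteration cannot even begin, so ``transfers essentially verbatim'' fails at the very first step.

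The paper's actual proof avoids duality-plus-regularity entirely in this window. It fixes $p_1<\tfrac2\alpha<p<\tfrac{2}{2-\alpha}<p_2$ and observes that each $w\in\{\eta\phi,\phi_1,\phi_2\}$ belongs both to $W^{2,p_1}=D(S_{p_1})$ and to $D(S_{p_2})$ (the large-$p$ case already proved), so the difference quotients $t^{-1}(e^{-tS}w-w)$ converge in $L^{p_1}$ and $L^{p_2}$, hence in the intermediate $L^p$, giving $w\in D(S_p)$; the same argument covers $C_c^\infty(\R^2\setminus\{0\})$. Then Corollary \ref{Rellich2} makes the graph norm equivalent to the $W^{2,p}$-norm on $W^{2,p}_0$, so $W=W^{2,p}_0\oplus\mathrm{span}\{\eta\phi,\phi_1,\phi_2\}$ is closed in $D(S_p)$, and density of $W$ in the graph norm follows from the core theorem applied to $Z=D(S_{p_1})\cap D(S_{p_2})\subset W$, which contains $C_c^\infty(\R^2\setminus\{0\})$ and is invariant under $e^{-tS}$ by consistency (Proposition \ref{consistency}). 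If you want to salvage your approach, you would need to replace your Steps 1--4 with some such argument for $D(S_p)\subset W$; as written, your proposal proves only one inclusion.
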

\begin{proof} As explained above, only the case $\frac 32 \leq \alpha <2$ and $ \frac{2}{\alpha-1}  \leq p \leq  \frac {2}{2-\alpha}$ requires a proof. Note also that $p \geq 4$.

Let $W$ be the right hand side in the statement and fix $p_1< p<p_2$ with $p_1 < \frac 2\alpha$ and $p_2 >\frac{2}{2-\alpha}$. If $w$ is one of the functions $\eta \phi, \phi_1, \phi_2$, then $w \in D(S_{p_2})$.  However $w \in W^{2,p_1} =D(S_{p_1})$, too.  This gives 
$$
\frac{e^{-tS} w-w}{t} \to -Sw, \quad t \to 0
$$
both in $L^{p_1}$ and $L^{p_2}$, hence in $L^p$. The same argument applies to any $w \in C_c^\infty (\R^N \setminus \{0\})$ and gives $$\left (C_c^\infty (\R^N \setminus \{0\}\right )\oplus {\rm span}\{\eta\phi,\phi_1, \phi_2\} \subset D(S_p).$$
By Proposition \ref{Rellich2} the multiplication by $|x|^{-\alpha}$ is a small perturbation of the Laplacian on $W^{2,p}_0$ and then the graph norm and the $W^{2,p}$ norm are equivalent on $C_c^\infty (\R^N \setminus \{0\})$. It follows that  $W \subset D(S_p)$ and that the graph norm and the $W^{2,p}$ are equivalent on $W^{2,p}_0$. Since ${\rm span}\{\eta\phi,\phi_1, \phi_2\} $ is finite dimensional, $W$ is closed in $D(S_p)$ for the graph norm and we have to show that it is dense.
Let us consider $$Z=W^{2,p_1} \cap \left (W^{2,p_2}_0\oplus {\rm span}\{\eta\phi,\phi_1, \phi_2\}\right )=D(S_{p_1}) \cap D(S_{p_2})\subset W.$$ 

To justify the last inclusion, take $u \in Z, u=v+w$ with $u \in W^{2,p_2}_0$ and $w \in {\rm span}\{\eta\phi,\phi_1, \phi_2\} $. Since $u,w \in W^{2,p_1}$, too, then $v \in W^{2,p_1}$ and hence in $W^{2,p}_0$.

$Z$ in dense in $L^p$ since contains $C^\infty_c (\R^N \setminus\{0\})$ and is invariant under the semigroup $e^{-tS}$. By the core theorem it is dense in $D(S_p)$ and this concludes the proof. 
\end{proof}

\section{Quasi-accretivity in $L^p$}
As already explained in the Introduction, the semigroup $e^{-tS}$ satisfies upper and lower Gaussian estimates. It is well know, then, that it extrapolates to an analytic semigroup in $L^1$ and that the spectrum is independent of $p$. In this section we concentrate on the simpler inequality $\|e^{-tS_p}\|_p \leq e^{\omega_p t} $ for $t \geq 0$. This inequality is clearly true with $\omega_p=0$, when $c \geq 0$, by domination with the heat semigroup and therefore we deal only with the case $c<0$, without aiming to compute the best constant $\omega_p$.

\begin{lemma}\label{lem:quasi-accretive}
Let  $ \alpha\in (0,2)$, $N \geq 2$. Then for every  $\epsilon>0$ there exists $C_\epsilon>0$ such that  for every $u \in C^\infty_c$
$$
\int_{\R^N} \frac{|u(x)|^p}{|x|^\alpha} \leq \epsilon \int_{\R^N} |\nabla u|^2|u|^{p-2} \chi_{\{u \neq 0\}} +C_\epsilon \int_{\R^N} |u|^p
$$
\end{lemma}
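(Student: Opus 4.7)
The plan is to integrate by parts against the vector field $x|x|^{-\alpha}$, exploiting the identity ${\rm div}(x|x|^{-\alpha}) = (N-\alpha)|x|^{-\alpha}$, apply the weighted arithmetic--geometric inequality $ab \le \frac{a^2}{2\eta} + \frac{\eta b^2}{2}$ pointwise, and then handle the residual integral by a short case analysis in $\alpha$. Set
\[
I := \int_{\R^N} \frac{|u|^p}{|x|^\alpha},\qquad J := \int_{\R^N} |\nabla u|^2 |u|^{p-2}\chi_{\{u\ne 0\}},\qquad K := \int_{\R^N} |u|^p.
\]
Since $p > 1$, one has $|u|^p \in C^1_c(\R^N)$ with $\nabla(|u|^p) = p|u|^{p-2}u\nabla u$, vanishing on $\{u = 0\}$. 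Integrating on $\R^N \setminus B_\delta$ and sending $\delta \to 0$ (the surface contribution is $O(\delta^{N-\alpha}) \to 0$, since $\alpha < 2 \le N$) I would obtain
\[
(N-\alpha)I = -p\int_{\R^N} |u|^{p-2}u\nabla u \cdot x|x|^{-\alpha}\chi_{\{u \ne 0\}}.
\]
Writing the integrand on $\{u \ne 0\}$ as $\bigl(|u|^{(p-2)/2}|\nabla u|\bigr)\cdot\bigl(|u|^{p/2}|x|^{1-\alpha}\bigr)$ and applying the weighted AM--GM gives, for every $\eta > 0$,
\[
(N-\alpha)I \le \frac{p}{2\eta}J + \frac{p\eta}{2}L,\qquad L := \int_{\R^N} |u|^p |x|^{2-2\alpha}.
\]

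It then remains to dominate $L$ by $I$ and $K$ depending on $\alpha$. If $\alpha = 1$, then $L = K$ and the conclusion follows immediately by taking $\eta = p/(\epsilon(N-1))$. For $1 < \alpha < 2$, I would split $L$ at radius $r$: on $B_r$, $|x|^{2-2\alpha} = |x|^{-\alpha}|x|^{2-\alpha} \le r^{2-\alpha}|x|^{-\alpha}$ (since $2-\alpha > 0$), whereas on $B_r^c$, $|x|^{2-2\alpha} \le r^{2-2\alpha}$ (since $2-2\alpha \le 0$). Hence $L \le r^{2-\alpha}I + r^{2-2\alpha}K$, and substituting back
\[
\Bigl(N-\alpha - \tfrac{p\eta r^{2-\alpha}}{2}\Bigr)I \le \tfrac{p}{2\eta}J + \tfrac{p\eta r^{2-2\alpha}}{2}K.
\]
For $0 < \alpha < 1$, I would invoke the pointwise bound $|x|^{-\alpha} \le |x|^{-1} + 1$ (on $B_1$, $|x|^{-\alpha} \le |x|^{-1}$; on $B_1^c$, $|x|^{-\alpha} \le 1$), reducing $I$ to $\int |u|^p/|x| + K$; the first term is then controlled by the already established $\alpha = 1$ case.

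The main obstacle is the two-parameter optimisation in the range $1 < \alpha < 2$. I would first set $\eta = p/(\epsilon(N-\alpha))$ so that the coefficient of $J$ on the right becomes of order $\epsilon$, and then choose $r$ small enough to guarantee $\frac{p\eta r^{2-\alpha}}{2} \le \frac{N-\alpha}{2}$; this is possible precisely because $2-\alpha > 0$, and it keeps the coefficient of $I$ bounded below by $(N-\alpha)/2$ so that absorption into the left-hand side is legitimate. After rearrangement this yields $I \le \epsilon J + C_\epsilon K$ with $C_\epsilon$ explicit in $\epsilon, p, N, \alpha$ and blowing up as $\epsilon \to 0$, in agreement with the natural scaling of the three quantities $I, J, K$ under $u(\cdot) \mapsto u(\lambda \cdot)$.
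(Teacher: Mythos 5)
Your proof is correct and, after the opening move, coincides with the paper's: both arguments reduce the claim to the inequality $(N-\alpha)\int_{\R^N} |u|^p|x|^{-\alpha}\,dx \le p\int_{\R^N} |u|^{p-1}|\nabla u|\,|x|^{1-\alpha}\,dx$, then split at a small radius with the same two pointwise weight bounds (your $|x|^{2-2\alpha}\le r^{2-\alpha}|x|^{-\alpha}$ inside and $\le r^{2-2\alpha}$ outside are exactly the squares of the paper's $\delta^{1-\alpha/2}|x|^{-\alpha/2}$ and $\delta^{1-\alpha}$), absorb the weighted integral into the left-hand side (legitimate since it is finite, $\alpha<N$), and settle $0<\alpha<1$ by reduction to an already-treated exponent. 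The only difference is cosmetic: you derive the initial inequality by the divergence theorem applied to the Hardy field $x|x|^{-\alpha}$ with $\mathrm{div}(x|x|^{-\alpha})=(N-\alpha)|x|^{-\alpha}$, whereas the paper integrates $\tfrac{d}{dt}|u(tx)|^p$ along rays to infinity — two standard derivations of the same bound with the same constant $p/(N-\alpha)$.
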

\begin{proof} From the identity
$$|u(x)|^p=p\int_1^\infty |u(tx)|^{p-2} u(tx) \nabla u(tx)\, dt$$ we obtain
$$\frac{|u(x)|^p}{|x|^\alpha} \leq p\int_1^\infty\frac{ |u(tx)|^{p-1}}{|x|^{\alpha-1}} | \nabla u(tx)|\, dt$$ and then (set $y=tx$) 
\begin{align} \label{dissip1}
\nonumber \int_{\R^N}|u(x)|^p \, dx &\leq p\int_1^\infty \int_{\R^N} \frac{|u(tx)|^{p-1}}{|x|^{\alpha-1}}  |\nabla u(tx)|\, dt= p \int_1^\infty \frac{dt}{t^{N-\alpha+1}}\int_{\R^N} \frac{|u(y)|^{p-1}}{|y|^{\alpha-1}} |\nabla u(y)|\, dy \\
&=\frac{p}{N-\alpha}\int_{\R^N} \frac{|u(y)|^{p-1}}{|y|^{\alpha-1}} |\nabla u(y)|\, dy:= \frac{p}{N-\alpha} I.
\end{align}
Assume first $1 \leq \alpha <2$, take a radius $\delta >0$ and split $I=I_\delta+J_\delta$, with $I_\delta, J_\delta$ being the integrals on $B_\delta$ and $\R^N \setminus B_\delta$. Then, by H\"older's inequality, 
$$ J_\delta \leq \delta^{1-\alpha} \int_{\R^N \setminus B_\delta} |u|^{p-1} |\nabla u| \leq \delta^{1-\alpha} \left(\int_{\R^N} |u|^p \right )^{\frac 12} \left(\int_{\R^N}|\nabla u|^2 |u|^{p-2} \chi_{\{u \neq 0\}} \right )^{\frac 12}.
$$
Using $|y|^{1-\alpha} \leq \delta^{1-\frac \alpha 2} |y|^{-\frac \alpha 2}$ in $B_\delta$ we estimate, using H\"older's inequality again, 
$$
I_\delta \leq \delta^{1-\frac \alpha 2} \int_{B_\delta} \frac{|u|^{p-1}}{|y|^{\frac \alpha 2}} |\nabla u|\, dy  \leq \delta^{1-\frac \alpha 2}\left (\int_{\R^N} \frac{|u|^p}{|y|^\alpha} \right )^{\frac 12} \left ( \int_{\R^N} |\nabla u|^2|u|^{p-2} \chi_{\{ u \neq 0\}}\right)^{\frac 12}.
$$
Setting $X^2= \int_{\R^N} \frac{|u|^p}{|x|^\alpha}$, $A^2=\|u\|_p^p$, $B^2=\int_{\R^N} |\nabla u|^2 |u|^{p-2} \chi_{\{u \neq 0\}}$ we have therefore 
$$
X^2 \leq \frac{p}{N-\alpha} \left (\delta^{1-\alpha}AB+\delta^{1-\frac \alpha 2} XB \right ) \leq \frac{p}{2(N-\alpha)} \left (\delta^{1-\alpha}(\eta^{-2}A^2+\eta^2B )+\delta^{1-\frac \alpha 2} (X^2+B^2)\right )
$$
The statement then follows for $1 \leq \alpha <2$ choosing first a small $\delta$ so that $\delta^{1-\frac \alpha 2} \approx  \epsilon$ and then a small $\eta$.

The case $0 < \alpha<1$ follows now immediately from the case where $1 \leq \alpha <2$ since
\begin{align*}\int_{\R^N} \frac{|u|^p}{|x|^\alpha} \leq \int_{B} \frac{|u|^p}{|x|^{\alpha+1}}+ \int_{\R^N \setminus B}|u|^p.
\end{align*}
\end{proof}

\begin{theorem} Let $N \geq 2$, $c<0$, $1<p<\infty$. There exists $\omega_p >0$ such that $\|e^{-tS_p}\|_p \leq e^{\omega_p t}$ for $t \geq 0$.
\end{theorem}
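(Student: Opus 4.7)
The goal is $L^p$-dissipativity of $S_p-\omega_p$, which I would establish on a regularization of $S$ in order to bypass the delicate structure of $D(S_p)$. By positivity of the semigroup (Proposition~\ref{consistency}) one has $|e^{-tS_p}u|\leq e^{-tS_p}|u|$, so it is enough to prove the bound for real non-negative $u$.

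For real $u\in C_c^\infty(\R^N)$, integration by parts gives
\[
\int_{\R^N}(Su)\,u|u|^{p-2}\,dx=(p-1)\int_{\R^N}|\nabla u|^{2}|u|^{p-2}\chi_{\{u\neq 0\}}\,dx+c\int_{\R^N}\frac{|u|^{p}}{|x|^{\alpha}}\,dx,
\]
and Lemma~\ref{lem:quasi-accretive} with $\epsilon=(p-1)/|c|$ (recall $c<0$) absorbs the potential term into the gradient term, producing $\int(Su)\,u|u|^{p-2}\,dx\geq -\omega_p\|u\|_p^p$ with $\omega_p:=|c|\,C_{(p-1)/|c|}$. Closing this estimate directly on $D(S_p)$ is problematic when $\alpha\geq N/p$, since by Theorems~\ref{domain1}--\ref{domain2} the domain strictly contains $W^{2,p}$ and includes the singular building blocks $\eta\phi,\phi_j$, which are not approximable by $C_c^\infty$ in graph norm. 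To bypass this I would regularize the potential: set $V_n:=\max\{c|x|^{-\alpha},-n\}\in L^\infty$ and $S^{(n)}:=-\Delta+V_n$. Then $-S^{(n)}_p$ is a bounded perturbation of $-\Delta$ with domain $W^{2,p}$ and core $C_c^\infty$. Since $V_n\geq c|x|^{-\alpha}$, the computation above gives the same dissipativity inequality for $S^{(n)}$ with the \emph{same} constant $\omega_p$, independent of $n$. By density on the core this extends to $W^{2,p}=D(S^{(n)}_p)$, and Lumer--Phillips yields $\|e^{-tS^{(n)}_p}\|_p\leq e^{\omega_p t}$ (first on real $L^p$, then on complex $L^p$ by positivity of $e^{-tS^{(n)}_p}$).

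To conclude I would pass to the limit $n\to\infty$. In $L^2$, the symmetric forms $a_n(u,u)=\int|\nabla u|^2+\int V_n|u|^2$ decrease monotonically to the form defining $S$; by Kato's monotone form convergence this yields strong resolvent convergence in $L^2$, which transfers to $L^p$ via the consistency of $S^{(n)}_p$ with $S^{(n)}_2$ on $L^p\cap L^2$ (the argument of Proposition~\ref{consistency} applies verbatim with $V_n\in L^\infty$). Trotter--Kato promotes resolvent convergence to strong convergence of the semigroups, and the uniform bound $e^{\omega_p t}$ passes to the limit. The main obstacle is precisely the one the truncation avoids: in the regime $\alpha\geq N/p$, $D(S_p)$ admits no natural $C_c^\infty$ core, so the dissipativity estimate cannot be closed at the level of the generator; moving the analysis to the semigroup, where only uniform bounds are needed, resolves the issue cleanly.
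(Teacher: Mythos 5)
Your proposal is correct, and its first half is exactly the paper's core computation: the identity $-\int_{\R^N}(\Delta u)|u|^{p-2}u=(p-1)\int_{\R^N}|\nabla u|^{2}|u|^{p-2}\chi_{\{u\neq 0\}}$ on $C_c^\infty$ combined with Lemma \ref{lem:quasi-accretive}, with $\epsilon$ chosen so that $|c|\epsilon\leq p-1$, yielding $\omega_p=|c|C_\epsilon$. Where you genuinely diverge is in how the estimate is globalized. The paper closes it on $D(S_p)=W^{2,p}$ only for $1<p<\frac{N}{\alpha}$, where $C_c^\infty$ is a core, then covers $p>\frac{N}{N-\alpha}$ by duality ($e^{-tS_p}=(e^{-tS_{p'}})^{*}$, so $\omega_p=\omega_{p'}$), and fills the possibly nonempty gap $\frac{N}{\alpha}\leq p\leq\frac{N}{N-\alpha}$ by Riesz--Thorin interpolation. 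You instead truncate the potential, $V_n=\max\{c|x|^{-\alpha},-n\}$, observe that $V_n\geq c|x|^{-\alpha}$ makes the same dissipativity estimate hold with the same $\omega_p$ uniformly in $n$ on $W^{2,p}=D(S^{(n)}_p)$, where $C_c^\infty$ genuinely is a core, and pass to the limit. Your route buys a single argument valid for all $1<p<\infty$ simultaneously and sidesteps the core problem for $\alpha\geq \frac Np$, as you correctly identify; its cost is the limiting machinery, and here two points deserve care. First, the monotone form convergence you invoke is the \emph{decreasing}-forms version (due to Simon rather than Kato's increasing-form theorem); it does apply here because the limit form is closed on $H^1$ thanks to the bound $\||x|^{-\alpha/2}u\|_2\leq\varepsilon\|\nabla u\|_2+C_\varepsilon\|u\|_2$ used in Section 5, so strong resolvent convergence in $L^2$ is legitimate. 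Second, the Trotter--Kato step in $L^p$ can and should be replaced by something softer: for $f\in L^p\cap L^2$ you already have $e^{-tS^{(n)}}f\to e^{-tS}f$ in $L^2$, so an a.e.-convergent subsequence plus Fatou and the uniform bound give $\|e^{-tS}f\|_p\leq e^{\omega_pt}\|f\|_p$, and consistency (Proposition \ref{consistency} together with Corollary \ref{allalpha}, on which your identification of the limit with $e^{-tS_p}$ still leans) plus density finishes; also make explicit that each $e^{-tS^{(n)}_p}$ is positive (clear for bounded $V_n$), since that is what justifies passing from real nonnegative data to complex $L^p$. With these small repairs your argument is complete and arguably more robust than the paper's, at the price of heavier abstract input where the paper uses only duality and interpolation.
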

\begin{proof}
First we consider the case where $1<p< \frac N \alpha$, so that $D(S_p)=W^{2,p}$.
We use the equality
$$-\int_{\R^N} (\Delta u)|u|^{p-2} u=(p-1)\int_{\R^N} |\nabla u|^2 |u|^{p-2} \chi_{\{u \neq 0\}}$$
which holds for every $u \in C_c^\infty$, see  \cite{MetaSpina} for the case $p<2$.  Lemma \ref{lem:quasi-accretive}  gives 
\begin{align*}\int_{\R^N} (S_p u)|u|^{p-2} u&=(p-1)\int_{\R^N} |\nabla u|^2 |u|^{p-2} \chi_{\{u \neq 0\}}+c\int_{\R^N}\frac{|u|^p}{|x|^\alpha}  \\
&\geq (p-1-c\epsilon) \int_{\R^N} |\nabla u|^2 |u|^{p-2} \chi_{\{u \neq 0\}}+cC_\epsilon \int_{\R^N} |u|^p \geq cC_\epsilon \|u\|_p^p
\end{align*}
by choosing $ c\epsilon \leq p-1$. This proves the quasi-accretivity on a core  which is equivalent to the estimate $\|e^{-tS_p}\|_p \leq e^{\omega_p t}$ with $\omega_p=-cC_\epsilon$.

Since $e^{-tS_p}$ is the adjoint of $e^{-tS_{p'}}$, the same estimate holds for every $p>\frac{N}{N-\alpha}$  with $\omega_p=\omega_{p'}$. 

Finally, the Riesz--Thorin interpolation theorem completes the proof for those $p$ (if any) between $\frac N \alpha$ and $\frac{N}{N-\alpha}$.

\end{proof}
\section{Further results and comments}
The next results is quite clear since the operator is ``radial''.

\begin{proposition} \label{radial}
For every $J \subset \N_0$, $e^{-tS_p}L^p_J \subset L^p_J$.
\end{proposition}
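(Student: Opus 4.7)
The plan is to reduce the $L^p$ claim to the $L^2$ case, exploiting the consistency of the semigroups (Proposition \ref{consistency} and Corollary \ref{allalpha}) together with the basis-free characterization (\ref{inclusione}) of $L^p_J$. The $L^2$ step itself rests on a decomposition of the quadratic form $a$ of Section 5 over spherical harmonic subspaces.

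First I handle $p=2$. I claim that $a(u,v)=0$ whenever $u\in H^1\cap L^2_{\{j\}}$, $v\in H^1\cap L^2_{\{k\}}$ with $j\neq k$. On smooth representatives $u=h(r)P_j(\omega)$, $v=g(r)P_k(\omega)$, the potential term factorizes into a radial integral times $\int_{S^{N-1}}P_j\bar P_k\,d\sigma=0$ by orthonormality. Using the split $|\nabla u|^2=|\partial_r u|^2+r^{-2}|\nabla_\tau u|^2$, the Dirichlet term involves both this integral and $\int_{S^{N-1}}\nabla_\tau P_j\cdot\nabla_\tau\bar P_k\,d\sigma$; the latter equals $\lambda_{n_j}\int_{S^{N-1}}P_j\bar P_k\,d\sigma=0$ after integration by parts on the sphere. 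Continuity of $a$ on $H^1$ extends this to the whole subspaces, so $a$ is the orthogonal sum of its restrictions to $H^1\cap L^2_{\{j\}}$; the associated selfadjoint operator $S_2$ decomposes correspondingly, and $e^{-tS_2}$ preserves each $L^2_{\{j\}}$ and hence any $L^2_J$.

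For general $1<p<\infty$, let $f\in L^p_J$ and approximate it in $L^p$ by functions $f_n\in C_c^\infty(\R^N\setminus\{0\})$ of the form $\sum_{j\in J_0}g_j(r)P_j(\omega)$ with $J_0\subset J$ finite and $g_j\in C_c^\infty(\,]0,\infty[\,)$; such approximations exist directly from the definition of $L^p_J$ as a closure. Each $f_n$ is bounded with compact support away from the origin, hence lies in $L^p\cap L^2_J$. By consistency, $e^{-tS_p}f_n=e^{-tS_2}f_n\in L^2_J$, and since $e^{-tS_p}f_n\in L^p$, the characterization (\ref{inclusione}) yields $e^{-tS_p}f_n\in L^p\cap L^2_J\subset L^p_J$. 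Passing to the $L^p$ limit and using that $L^p_J$ is closed, one obtains $e^{-tS_p}f\in L^p_J$. The main technical point is the $L^2$ form decomposition; once that is in place, the $L^p$ extension is a routine density-plus-consistency argument.
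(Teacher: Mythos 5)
Your proof is correct, but it takes a genuinely different route from the paper's. The paper never passes through the $L^2$ form: for $1<p<\frac N\alpha$ it argues directly in $L^p_J$, using Lemma \ref{heatLpn} (the heat semigroup preserves $L^p_J$, so the Laplacian restricted to $L^p_J$ has domain $W^{2,p}_J$) together with Proposition \ref{generation1} to see that $|x|^{-\alpha}$ remains a small perturbation on the invariant subspace; hence $(\lambda+S_p)W^{2,p}_J=L^p_J$ for large $\lambda$, the resolvent preserves $L^p_J$, and therefore so does the semigroup. For $p\geq \frac N\alpha$ the paper uses exactly your closing move: consistency with an admissible exponent $q<\frac N\alpha$, the identity $L^p\cap L^q_J=L^p_J\cap L^q_J$ coming from \eqref{inclusione}, and density. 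So the two arguments share the consistency-plus-density reduction and differ only in the base case: you work at $p=2$ by decomposing the form $a$ over the spherical harmonic subspaces, while the paper works at any $p<\frac N\alpha$ by perturbation of the generator. Your route gives a cleaner Hilbert-space statement (the operator $S_2$ genuinely decomposes as an orthogonal direct sum) and avoids the claim about the domain of the restricted Laplacian; the paper's route gives invariance of the resolvent directly in $L^p$ without invoking the form. One point you should make explicit: passing from ``$a(u,v)=0$ across distinct subspaces'' to ``$e^{-tS_2}$ preserves $L^2_J$'' is the invariance criterion for forms (see Ouhabaz, cited in Section 5), which additionally requires that the orthogonal projection $\Pi_J$ onto $L^2_J$ map the form domain $H^1$ into itself; this holds because $\Pi_J$ commutes with the selfadjoint semigroup $e^{t\Delta}$ (Lemma \ref{heatLpn} with $p=2$) and hence preserves $H^1$, or alternatively one realizes $a$ as a direct sum of closed forms by combining your orthogonality computation with the density statement of Lemma \ref{density}. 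With that remark added, your argument is complete.
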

\begin{proof} From Lemma \ref{heatLpn} we know that $e^{t\Delta}L^p_J \subset L^p_J$ and then the domain of $\Delta$ restricted to $L^p_J$ is $W^{2,p}_J$. Assume first that $1<p< \frac N \alpha$ so that  the multiplication by $|x|^{-\alpha}$ is a small perturbation of $\Delta$ and $D(-S_p)=W^{2,p}$, see Proposition \ref{generation1}. Since the same holds in $L^p_J$, it follows that $(\lambda+S_p)W^{2,p}_J = L^p_J$ for large $\lambda$. Then the resolvent $(\lambda+S_p)^{-1}$ preserves $L^p_J$, hence the semigroup.

If $p \geq \frac N \alpha$, we choose $q< \frac N \alpha$ and use consistency. $e^{-tS_p} (L^p_J \cap L^q_J)=e^{-t S_q}(L^p_J \cap L^q_J) \subset L^q_J $. Since also $e^{-tS_p} L^p_J \subset L^p$ we have  $e^{-tS_p} (L^p_J \cap L^q_J) \subset L^p\cap L^q_J=L^p_J \cap L^q_J$ and we conclude by density.
\end{proof}

The above proposition indicates an alternative way for proving generation and domain characterization. One can prove first the result for $L^p_{\geq 2}$ using Corollary \ref{Rellichalways} to show that $-S_p$ is a small perturbation of $\Delta$, as in the proof of Proposition \ref{generation1}, and then ODE techniques for $L^p_0$ and $ L^p_1$.

\medskip

Operators with many singularities $-\Delta+\sum_k \frac{c_k}{|x-x_k|^\alpha}$ can be treated similarly as in this paper if the number of singularities is finite. In the case of infinitely many singularities one needs probably $\sup_k |c_k| <\infty$ and $\inf |x_i-x_j|>0$ to cut and paste safely the functions  $\phi(\cdot-x_i), \phi_j(\cdot-x_i)$  of Section 6. 

Finally, let us mention that spectral properties of $S_2$ are well understood, see \cite[Theorems XIII.6, XIII.82]{ReedSimon}. In particular, $S_2$ has infinitely many negative eigenvalues when $c<0$. Note also that, since the semigroup satisfies upper Gaussian estimates, the spectrum is independent of $p$. The Coulomb case corresponding to $\alpha=1, N=3$ can be found in \cite[V.12.4]{CourantHilbert}.

The following computation easily shows the existence of infinitely many negative eigenvalues when $c<0$.
Let $P_n$ be a spherical harmonic of order $n\geq 2$ and  $h_n(x)=|x|^{n}P_n(\frac{x}{|x|})$. For $\gamma>0$ set
\begin{align*}
w_n=h_n(x)e^{-\gamma |x|^{2-\alpha}}.
\end{align*}
Then we have $w_n\in W_0^{2,2}(\R^N)\subset D(S_2)$ 
for all dimension $N$ and
\[
\left(-\Delta+\frac{c}{|x|^\alpha}\right)w_n
=\left(\frac{c+\gamma(2-\alpha)(N-\alpha+2n)}{|x|^\alpha}
-(2-\alpha)^2\gamma^2|x|^{2\alpha-2}\right)w_n.
\] 
Choosing $\gamma_n=-\frac{c}{(2-\alpha)(N-\alpha+2n)}$, we have 
\[
\left(-\Delta+\frac{c}{|x|^\alpha}\right)w_n
=-\frac{c^2}{(N-\alpha+2n)^2}|x|^{2\alpha-2}w_n
\]
and therefore $(S_2w_n,w_n)<0$. Since also $(w_n, w_k)=(S_2 w_n, w_k)=0$ for $n \neq k$ we have constructed an infinite dimensional  subspace of $D(S_2)$ where the associated quadratic form  is negative. Since  $\sigma_{\rm ess}(S_2)=[0,\infty)$ (note that $|x|^{-\alpha} \in L^p(B)$ for some $p> \frac N 2$ and tends to 0 at $\infty$)
we deduce that 
$S_2$ has infinitely negative eigenvalues, by minimax theory.

{\small 

}

\begin{thebibliography}{99}
\newcommand{\bibitemx}[1]{\bibitem{#1}\label{#1}}

\newcommand{\mathsci}[1]{\href{https://mathscinet.ams.org/mathscinet/search/publdoc.html?pg1=MR&s1={#1}&loc=fromreflist}{MR{#1}}}
\newcommand{\arXiv}[1]{\href{https://arxiv.org/abs/#1}{arXiv:{#1}}}

\bibitemx{Agmon1959}
S. Agmon, 
{\it The $L_p$ approach to the Dirichlet problem. I. Regularity theorems}, 
Ann.\ Scuola Norm.\ Sup.\ Pisa Cl.\ Sci.\ (3) {\bf 13} (1959), 405--448.
\bibitemx{DaviesHinz1998}
E.B. Davies, A.M. Hinz, 
{\it Explicit constants for Rellich inequalities in $L_p(\Omega)$}, 
Math.\ Z.\ {\bf 227} (1998), no. 3, 511--523.

\bibitemx{CourantHilbert}
R. Courant, D. Hilbert, ``Methods of  Mathematical Physics,'' vol.\ I, Wiley (1989).

\bibitem{Kato}
T. Kato, 
{\it Fundamental properties of Hamiltonian operators of Schr\"odinger type}, 
Trans.\ Amer.\ Math.\ Soc.\ {\bf 70} (1951), 195--211.

\bibitemx{MetaSpina}
G. Metafune, C. Spina, 
{\it An integration by parts formula in Sobolev spaces}, 
Mediterr.\ J.\ Math.\ {\bf 5} (2008), 357--369.

\bibitemx{MOSS-inverse2}
G. Metafune, 
N. Okazawa, 
M. Sobajima, 
C. Spina, {\it Scale invariant elliptic operators with singular coefficients}, 
J.\ Evol.\ Equ.\ {\bf 16} (2016), no. 2, 391--439.
\bibitemx{MSS-Rellich-CZ}
G. Metafune, M. Sobajima, C. Spina, 
{\it Weighted Calder\'on-Zygmund and Rellich inequalities in $L^p$}, 
Math.\ Ann.\ {\bf 361} (2015), no. 1-2, 313--366.


\bibitemx{MSS-Rellich-disconti}
G. Metafune, M. Sobajima, C. Spina, 
{\it 
Rellich and Calder\'on-Zygmund inequalities for an operator with discontinuous coefficients}, 
Ann.\ Mat.\ Pura Appl.\ (4) {\bf 195} (2016), no. 4, 1305--1331.



\bibitem{MSSnonunique} 
G. Metafune, M. Sobajima, C. Spina, {\it Non-uniqueness for second order elliptic operators}, Ann.\ Mat.\ Pura Appl.\ (4) {\bf 195} (2016), no. 4, 1305--1331.

\bibitemx{MSS-disconti-gen}
G. Metafune, M. Sobajima, C. Spina, 
{\it Elliptic and parabolic problems for a class of operators with discontinuous coefficients}, 
Ann.\ Sc.\ Norm.\ Super.\ Pisa Cl.\ Sci.\ (5) {\bf 19} (2019), no. 2, 601--654.	

\bibitemx{Ouhabaz}
E.M. Ouhabaz, ``Analysis of Heat equations on Domains,'' Princeton University Press (2005).

\bibitemx{Okazawa1984}
N. Okazawa, 
{\it An $L^p$-theory for Schr\"odinger operators with nonegative potentials}, 
Japan.\ J.\ Math.\ {\bf 36} (1984), no. 4, 675--688.

\bibitemx{Okazawa1996}
N. Okazawa, 
{\it $L^p$-theory of Schr\"odinger operators with strongly singular potentials}, 
Japan.\ J.\ Math.\ {\bf 22} (1996), no. 2, 199--239.

\bibitemx{Shen1995} 
Z.W. Shen, 
{\it $L^p$ estimates for Schr\"odinger operators with certain potentials}, 
Ann.\ Inst.\ Fourier {\bf 45} (1995), no. 2, 513--546.

\bibitemx{ReedSimon}
R. Reed, B. Simon, 
``Methods of Modern Mathematical Physics'' vol. IV, Academic Press (1978).

\bibitemx{Simon1982} 
B. Simon, 
{\it Schr\"odinger semigroups}, 
Bull.\ Amer.\ Math.\ Soc.\ {\bf 7} (1982), no. 3, 447--526.

\bibitemx{vilenkin}
N.J. Vilenkin,
``Special functions and the theory of group representations,''
Translated from the Russian by V. N. Singh 
Transl.\ Math.\ Monogr.\ Vol. 22 American Mathematical Society, Providence, RI, 1968.
\end{thebibliography}
\end{document}